\newtheorem{theorem}{Theorem}
\newtheorem{defi}[theorem]{Definition}
\newtheorem{lemma}[theorem]{Lemma}
\newtheorem{coro}[theorem]{Corollary}
\newtheorem{proposition}[theorem]{Proposition}
\newtheorem{remark}[theorem]{Remark}
\numberwithin{equation}{section}
\begin{document}

\title[Quantum Grassmann superalgebra as $\mathcal U_q(\mathfrak{gl}(m|n))$-module algebra]{Quantum (dual) Grassmann superalgebra as $\mathcal U_q(\mathfrak{gl}(m|n))$-module algebra and beyond}

\author[G. Feng]{Ge Feng}
\address{School of Mathematical Sciences, Shanghai Key Laboratory of PMMP, East China Normal University, Shanghai 200241, China}
\email{1187816868@qq.com}

\author[N.H. Hu]{Naihong Hu}
\address{School of Mathematical Sciences, Shanghai Key Laboratory of PMMP, East China Normal University, Shanghai 200241, China}
\email{nhhu@math.ecnu.edu.cn}

\author[M.R. Zhang]{Meirong Zhang}
\address{School of Mathematical Sciences, Shanghai Key Laboratory of PMMP, East China Normal University, Shanghai 200241, China}
\email{mrzhang@math.ecnu.edu.cn}

\author[X.T. Zhang]{Xiaoting Zhang}
\address{Department of Mathematics, Uppsala University, Box. 480, SE-75106, Uppsala, Sweden}
\email{xiaoting.zhang@math.uu.se}

\thanks{The paper is supported by the NNSFC (Grant No.
 11771142) and in part by Science and Technology Commission of Shanghai Municipality (No. 18dz2271000).}

\begin{abstract}
We introduce and define the quantum affine $(m|n)$-superspace (or say quantum Manin superspace)
$A_q^{m|n}$ and its dual object, the quantum Grassmann superalgebra $\Omega_q(m|n)$. Correspondingly,
a quantum Weyl algebra $\mathcal W_q(2(m|n))$ of $(m|n)$-type is introduced as the quantum differential operators (QDO for short) algebra $\textrm{Diff}_q(\Omega_q)$ defined over $\Omega_q(m|n)$, which is a smash product of the quantum differential Hopf algebra $\mathfrak D_q(m|n)$ (isomorphic to the bosonization of the quantum Manin superspace) and the quantum Grassmann superalgebra $\Omega_q(m|n)$. An interested point of this approach here is that even though $\mathcal W_q(2(m|n))$ itself is in general no longer a Hopf algebra, so are some interesting sub-quotients existed inside. This point of view gives us one of main expected results, that is, the quantum (restricted) Grassmann superalgebra $\Omega_q$ is made into
the $\mathcal U_q(\mathfrak g)$-module (super)algebra structure,
$\Omega_q=\Omega_q(m|n)$ for $q$ generic, or $\Omega_q(m|n, \bold 1)$ for $q$ root of unity, and  $\mathfrak g=\mathfrak{gl}(m|n)$ or $\mathfrak {sl}(m|n)$, the general or special linear Lie superalgebra. This QDO approach
provides us with explicit realization models for some simple $\mathcal U_q(\mathfrak g)$-modules, together with
the concrete information on their dimensions. Similar results hold for the
quantum dual Grassmann superalgebra $\Omega_q^!$ as $\mathcal U_q(\mathfrak g)$-module algebra.

This paper is a sequel to \cite{Hu}, some examples of pointed Hopf algebras can arise from the QDOs, whose idea is an expansion of the spirit noted by Manin in \cite{Ma}, \& \cite{Ma1}. For instance, as byproducts, if considering the  bosonizations of $A_q^{m|n}$, $\Omega_q(m|n)$, $\mathcal D_q(m|n)$ and
their finite dimensional restricted objects when $\textbf{char}(q)=\ell>2$, we can obtain some examples of pointed Hopf algebras, e.g., the known multi-rank (generalized) Taft Hopf algebras.
\end{abstract}
\keywords{quantum general (special) linear superalgebra; quantum affine Manin superspace; quantum (dual) Grassmann superalgebra; quantum differential operators, bosonization, quantum Weyl algebra of $(m|n)$-type, module algebra, simple module}

\subjclass{Primary 17A70, 17B10, 17B37, 20G05, 20G42, 81R50; Secondary 81R60, 81T70, 81T75}
\maketitle

\tableofcontents

\section{Introduction}

\noindent{\bf 1.1.}
For the Drinfeld-Jimbo type quantum groups $U_q(\mathfrak g)$ of semisimple Lie algebras $\mathfrak g$, there was a Majid
``quantum tree" question (\cite{Ma1}), that was to claim, any $U_q(\mathfrak g)$ can be constructed from $U_q(\mathfrak{sl}_2)$ via a series of suitable double-bosonization procedures. Here the double bosonization construction (roughly speaking, which consists of two bosonizations or say Radford biproducts well-arranged in a certain manner) was obtained by Sommerh\"auser (\cite{Som}) in a Yetter-Drinfeld category $^H_H{\mathcal YD}$ (a braided monoidal category) for $H=\Bbb k[G]$, and $G$ a finitely generated abelian group (this was originally motivated by Lusztig's construction of $U_q(\mathfrak g)$ via his defining algebra $\mathfrak f'$ in \cite{Lus2}), and a general version for $H$ to be quasi-triangular was then obtained by Majid in another braided category $_H\mathcal M$ (his formulation for the theory directly depends on the explicit information provided by the $R$-matrices involved rather than the braidings). As an important application of this theory named the double-bosonization, combined an observation from the $U_q(\mathfrak {gl}(n))$-module algebra structure (via realizing  $U_q(\mathfrak {gl}(n))$ as a certain quantum differential operators defined over the quantum divided power algebra $\mathcal A_q(n)$ introduced in \cite{Hu}) with  the motivation from \cite{Ma1} and Faddeev-Reshetikhin-Takhtajan (\cite{FRT}), the Majid expectation just mentioned has been solved in a series of joint papers of H.M. Hu and the second author (\cite{HH}, \cite{HH1}, \cite{HH2}, \cite{HH3}), among which more efforts when our treating with the more subtle and more complicated exceptional cases need to be made via establishing the so-called generalized double-bosonization construction. {\it A further interesting problem is to ask what the Majid ``quantum tree" of $U_q(\mathfrak g)$'s looks like for the (simple) Lie superalgebras $\mathfrak g$ (see the Kac-classification list \cite{Kac1})}. Actually, one of the main results of this paper, namely, the quantum Grassmann superalgebra or dual Grassmann superalgebra we will define here is made into a $U_q(\mathfrak{gl}(m|n))$-module superalgebra respectively, can be regarded as the first starting step towards this goal.

\smallskip
\noindent{\bf 1.2.}
 For the general linear Lie superalgebra $\mathfrak g=\mathfrak{gl}(m|n)$, or special linear Lie superalgebra $\mathfrak{sl}(m|n)$, in order to construct some $U_q(\mathfrak g(m|n))$-module superalgebras of suitable size such that they are fitted into the framework working well for the rank-induction construction from $U_q(\mathfrak{g}(m|n))$ to $U_q(\mathfrak {g}(m{+}1|n))$ or $U_q(\mathfrak{g}(m|n{+}1))$ ($\mathfrak g=\mathfrak {gl}$, or $\mathfrak {sl}$), let us start with the natural $U_q(\mathfrak{g}(m|n))$-module $\bold V$ of dimension $m+n$, for $\mathfrak g=\mathfrak{gl}$ or $\mathfrak{sl}$, and introduce a new notion of the so-called quantum affine $(m|n)$-superspace or say the quantum affine Manin superspace $A_q^{m|n}$ in subsection 3.2, which is a generalization of the quantum affine Manin spaces $A_q^{m|0}$ and $A_{q^{-1}}^{0|n}$ (see \cite{Ma1}). In order to simultaneously treat with the generic case or $\textbf{char}(q)=\ell>2$ (the root of unity case), we define its dual version, the quantum Grassmann superalgebra $\Omega_q(m|n)$ and the quantum restricted Grassmann superalgebra $\Omega_q(m|n,\bold 1)$ when $\textbf{char}(q)=\ell>2$ in subsection 3.3. Following the same spirit of \cite{Hu}, we shall generalize those quantum differential operators (QDOs for short) defined over the quantum divided power algebra $\mathcal A_q$ to the quantum Grassmann superalgebra $\Omega_q$, see subsection 3.4. Actually, these QDOs generate a superalgebra $\mathcal D_q(m|n)$, which is isomorphic to the quantum affine Manin superspace just mentioned (Proposition 7).
Furthermore, the bosonization of $\mathfrak D_q(m|n)$ (resp. $\mathfrak D_q(m|n, \bold 1)$) is a (resp. finite-dimensional) pointed Hopf algebra, see Theorem 11 (resp. Corollary 12 under the assumption $\textbf{char}(q)=\ell>2$). Lemma 9 indicates that the quantum (resp. restricted) Grassmann superalgebra $\Omega_q(m|n)$ (resp. $\mathfrak D_q(m|n, \bold 1)$) is a $\mathfrak D_q(m|n)$-module algebra (resp.  $\mathfrak D_q(m|n, \bold 1)$-module algebra). This allows us to define their smash product
as the quantum differential operator algebra $\textrm{Diff}_q(\Omega_q)$ over the quantum (restricted) Grassmann superalgebra
$\Omega_q$, named the quantum (restricted) Weyl algebra $\mathcal W_q(2(m|n))$ (resp. $\mathcal W_q(2(m|n), \bold 1)$) of $(m|n)$-type in subsection 3.10. Here, an interested point of this approach is that even though $\mathcal W_q(2(m|n))$ itself is in general no longer a Hopf algebra (see Remark 3.5 in \cite{Hu}), so are some interesting sub-quotients existed inside. This allows us in Section 4 to realize the Hopf algebra $\mathcal U_q(\mathfrak g)$ by defining some compatible QDOs in $\mathcal W_q(2(m|n))$, such that the quantum (restricted) Grassmann superalgebra $\Omega_q$ is made into the $\mathcal U_q(\mathfrak g)$-module (super)algebra structure, $\Omega_q=\Omega_q(m|n)$ for $q$ generic, or $\Omega_q(m|n, \bold 1)$ for $q$ root of unity, and $\mathfrak g=\mathfrak{gl}(m|n)$ or $\mathfrak {sl}(m|n)$, the general or special linear Lie superalgebra. In this way, we get a realization model for some simple $\mathcal U_q(\mathfrak g)$-modules since we will prove the components of $\Omega_q$ to be simple as $\mathcal U_q(\mathfrak g)$-modules and also give their dimension-formulae. Similar results for the quantum dual Grassmann (resp. restricted) superalgebra $\Omega_q^!(m|n)$ (resp. $\Omega_q^!(m|n,\bold 1)$) as $\mathcal U_q(\mathfrak g)$-module algebra are obtained in Section 5, and their simple submodules are also described.

\smallskip
\noindent{\bf 1.3.}
Let us digress briefly to recall some related research background. Various discussions on quantum differential operators
intensively appeared in the early 1990s since the seminal work of Wess-Zumino \cite{WZ} (1990) and Woronowicz \cite{Wo} (1989)
published, but seemed lack of a unified definition (for QDOs), which is unlike the case by Woronowicz's treating with the dual concept (i.e., quantum differential forms, QDF for short): his axiomatic internal approach to the first order differential calculi (FODC, for short) satisfying the usual Leibniz rule, not only yields the equivalent terminology ``bicovariant bimodule" as the known ``Hopf bimodule" in Hopf algebra theory, but also leads to the appearance of Woronowicz's braiding (Proposition 3.1 \cite{Wo}, also see Theorem 6.3 \cite{Sch}). In fact, the defining condition of Yetter-Drinfeld module appeared implicitly in Woronowicz's work a bit earlier than Yetter \cite{Y} (see formula (2.39) in \cite{Wo}), as was witnessed by Schauenburg in Corollaries 6.4 \& 6.5 of \cite{Sch} proving that the category of Woronowicz's bicovariant bimodules is categorically equivalent to the category of Yetter-Drinfeld modules, while the latter has currently served as an important working framework for classifying the finite-dimensional pointed Hopf algebras. In a word, Woronowicz' framework on the FODC (essentially, the QDF)
initiated the further development so rich and deep. Roughly speaking, QDO, as a dual notion of QDF, also deserves more attention, as was early noted by Manin in \cite{M1} (see 2.2. Basic problem, pp. 1010) (his motivation mainly from \cite{WZ} \& \cite{Wo}), but not yet sufficient. It should be noticed that the design of our QDO in subsections 2.6, 3.4 (also see \cite{Hu} \& \cite{ZHn}) leads to our quantum differential (form) $d$ satisfying the twisted Leibniz rule (see \cite{GH}, pp. 5), which is different from both \cite{WZ} and \cite{Wo}.
This seemly corresponds to the broader ``Hom-"picture phenomenon beyond standard quantization, a current hot research area.

\medskip
\section{Preliminaries}
\subsection{Notation and setup}

Throughout this paper, we work on an algebraically closed field $\Bbbk$ of characteristic zero.
We denote by $\mathbbm{Z}_+,\mathbbm{N}$ the set of nonnegative integers, positive integers, respectively.

For any $m,n\in\mathbbm{Z}_+$, we denote by $I$ the set $\{1,2,\cdots,m+n\}$ with the convention that $I=\varnothing$ if $m=n=0$. Set $I_0:=\{1,2,\cdots,m\}$, $I_1:=\{m+1,m+2,\cdots,m+n\}$. Thus we have $I=I_0\cup I_1$. Denote by $J$ the set $\{1,2,\cdots,m+n-1\}$ with the convention that $J=\varnothing$ if $m+n<2$. For any $i,j\in I$, denote by $E_{ij}$ the elementary matrix of size $(m+n)\times(m+n)$ with $1$ in the $(i,j)$ position and zero in others.
For any $\mathbbm{Z}_2$-graded vector space $V:=V_{\overline{0}}\oplus V_{\overline{1}}$, denote by $\overline{v}$ the parity of the homogeneous element $v$ in $V$.

\subsection{The general linear Lie superalgebras}

Let $\mathfrak{g}$ denote by the general linear Lie superalgebra $\mathfrak{gl}(m|n)$ which has a standard basis $E_{ij}, i,j\in I$, see \cite{Kac1}. Then we have $\mathfrak{g}=\mathfrak{g}_{\overline{0}}\oplus\mathfrak{g}_{\overline{1}}$, where
\begin{align*}
\mathfrak{g}_{\overline{0}}:=&\mathrm{span}_\Bbbk\{E_{ij}\mid i,j\in I_0\text{ or } i,j\in I_1\},\\
\mathfrak{g}_{\overline{1}}:=&\mathrm{span}_\Bbbk\{E_{ij}\mid i\in I_0,j\in I_1 \text{ or } i\in I_1, j\in I_0\}.
\end{align*}
If $m=0$ (resp. $n=0$), then $\mathfrak{g}$ is exactly the general linear Lie algebra $\mathfrak{gl}(n)$ (resp. $\mathfrak{gl}(m)$).

The standard Cartan subalgebra $\mathfrak{h}$ of $\mathfrak{g}$ consists of all diagonal matrices in $\mathfrak{g}$, that is, the $\Bbbk$-span of $E_{ii},i\in I$. For each $i\in I$, we
denote by $\epsilon_i$ the dual of $E_{ii}$, which forms a basis of $\mathfrak{h}^\ast$.
The root system of $\mathfrak{g}$ with respect to $\mathfrak{h}$ is $\Delta:=\Delta_{\overline{0}}\cup\Delta_{\overline{1}}$, where
\begin{displaymath}
\Delta_{\overline{0}}:= \{\epsilon_i-\epsilon_j\mid\ i,j\in I_{\overline{0}} \ \textrm{or} \ i,j\in I_{\overline{1}}\}\quad\text{and}\quad
\Delta_{\overline{1}}:= \{\pm(\epsilon_i-\epsilon_j)\mid\  i\in I_{\overline{0}}, j\in I_{\overline{1}}\},
\end{displaymath}
 and its standard fundamental (simple root) system is given by the set
 \begin{displaymath}
 \Pi:=\{\alpha_i:=\epsilon_i-\epsilon_{i+1}\mid 1\leq i\leq m+n-1\}.
 \end{displaymath}
 Let $\Lambda:=\mathbbm{Z}\epsilon_1+\mathbbm{Z}\epsilon_2+\cdots+\mathbbm{Z}\epsilon_{m+n}$. For each $i\in I$
 denote by $\omega_i$ the fundamental weight $\epsilon_1+\epsilon_2+\cdots+\epsilon_{i}$.
From \cite{Mu}, we see that there is a symmetric bilinear form on $\Lambda$:
\begin{displaymath}
(\epsilon_i,\epsilon_j)=\left\{\begin{array}{ll} \delta_{ij} \quad & \mathrm{if} \ 1\leq i\leq m,\\-\delta_{ij} \quad & \mathrm{if} \ m+1\leq i \leq m+n,
\end{array}\right.
\end{displaymath}
which is induced from the supertrace $\mathfrak{str}$ on $\mathfrak{g}$ defined as:
$\mathfrak{str}(g):=\mathrm{tr}(a)-\mathrm{tr}(d)$ if
\begin{displaymath}
g=\left(\begin{array}{cc}
a & b \\
c & d
\end{array}\right)
\end{displaymath}
with $a, b, c$ and $d$ being $m\times m,m\times n, n\times m$ and $n\times n$ matrices, respectively.

\subsection{The quantum general linear superalgebras $U_q(\mathfrak{gl}(m|n))$}

Set $\Bbbk^\times:=\Bbbk\setminus\{0\}$. We assume that $q$ $(\ne1)\in \Bbbk^\times$. The quantum general linear
superalgebra $U_q(\mathfrak{gl}(m|n))$ (for instance, see \cite{BKK}, \cite{DM}, \cite{Zh}, \cite{Y}, etc.)
is defined as the $\Bbbk$-superalgebra with generators $E_j,\; F_j\ (j\in J),
K_i,\; K_i^{-1}\ (i\in I)$ and relations:
\begin{eqnarray*}
&(R1)&\quad K_iK_j=K_jK_i, \quad K_iK_i^{-1}=1=K_i^{-1}K_i;\\
&(R2)&\quad K_iE_j=q_i^{\delta_{ij}}E_jK_i, \quad  K_iF_j=q_i^{-\delta_{ij}}F_jK_i;\\
&(R3)&\quad E_iF_j-(-1)^{p(E_i)p(F_i)}F_jE_i=\delta_{ij}\frac{\mathcal K_i-\mathcal K_i^{-1}}{q_i-q^{-1}_i};\\
&(R4)&\quad E_iE_j=E_jE_i, \quad F_iF_j=F_jF_i, \quad|\,i{-}j\,|>1;\\
&(R5)&\quad E_i^2E_j-(q+q^{-1})E_iE_jE_i+E_jE_i^2=0,\quad |\,i{-}j\,|=1\text{ and }i\neq m;\\
&\quad&\quad F_i^2F_j-(q+q^{-1})F_iF_jF_i+F_jF_i^2=0, \quad |\,i{-}j\,|=1\text{ and }i\neq m;\\
&(R6)&\quad E_m^2=F_m^2=0;\\
&(R7)&\quad E_{m-1}E_mE_{m+1}E_m+E_mE_{m-1}E_mE_{m+1}+E_{m+1}E_mE_{m-1}E_m\\
&\quad&\quad +E_mE_{m+1}E_mE_{m-1}-(q+q^{-1})E_mE_{m-1}E_{m+1}E_m=0,\\
&\quad&\quad F_{m-1}F_mF_{m+1}F_m+F_mF_{m-1}F_mF_{m+1}+F_{m+1}F_mF_{m-1}F_m\\
&\quad&\quad +F_mF_{m+1}F_mF_{m-1}-(q+q^{-1})F_mF_{m-1}F_{m+1}F_m=0,
\end{eqnarray*}
where $\mathcal K_i=K_iK_{i+1}^{-1}$ for $i\in J$, and
\begin{displaymath}
q_i=\left\{\begin{array}{ll} q,  \quad &  i\in I_0,\\ q^{-1}, \quad & i\in I_1,
\end{array}\right.\quad\text{and}\quad
p(E_i)=p(F_i)=\delta_{im}.\leqno{(*)}
\end{displaymath}

Moreover, there is a Hopf superalgebra structure $(\Delta, \epsilon, S)$ on $U_q(\mathfrak{gl}(m|n))$
such that for all $i\in I, j\in J$:
\begin{eqnarray}
\Delta(E_j)=E_j\otimes \mathcal K_j+1\otimes E_j, & \epsilon(E_j)=0, & S(E_j)=-E_j\mathcal K_j^{-1},\label{hs1}\\
\Delta(F_j)=F_j\otimes 1+\mathcal K_j^{-1}\otimes F_j, & \epsilon(F_j)=0, & S(F_j)=-\mathcal K_jF_j,\label{hs2}\\
\Delta(K_i^{\pm1})=K_i^{\pm1}\otimes K_i^{\pm1}, & \epsilon(K_i^{\pm1})=1, & S(K_i^{\pm1})=K_i^{\mp1}\label{hs3}.
\end{eqnarray}

{\bf Remark.} We note that relation $(R3)$ and convention $(*)$ imply an important inclusion relation, that is,
the quantum general linear superalgebra $U_q(\mathfrak{gl}(m|n))$ contains the tensor product of quantum general
linear subalgebras $U_q(\mathfrak{gl}(m))\otimes U_{q^{-1}}(\mathfrak{gl}(n))$ as its sub-Hopf algebra rather than
$U_q(\mathfrak{gl}(m))\otimes U_q(\mathfrak{gl}(n))$. This is a remarkable observation in our design for constructing
some modules of $U_q(\mathfrak{gl}(m|n))$ later on.


\subsection{Arithmetic properties of $q$-binomials}

Let $\mathbbm{Z}[v,v^{-1}]$ be the Laurent polynomial ring in a variable $v$. For any integer $n\geq0$,
define
\begin{displaymath}
[\,n\,]_v:=\frac{v^n-v^{-n}}{v-v^{-1}}\quad\text{and}\quad[\,n\,]_v!:=[\,n\,]_v[\,n{-}1\,]_v\cdots[\,1\,]_v,
\end{displaymath}
both of which lie in $\mathbbm{Z}[v,v^{-1}]$.
Then for two integers $s,\,r$ with $r\geq0$, one has, see \cite{Lus2},
\begin{displaymath}
{\,s\,\brack\, r\,}_v:=\prod_{i=1}^r\frac{v^{s-i+1}-v^{-s+i-1}}{v^i-v^{-i}}\in\mathbbm{Z}[v,v^{-1}].
\end{displaymath}
Set ${\tiny{\,s\,\brack\, r\,}_v}=0$ when $r<0$. By definition, it follows that
\begin{enumerate}
\item ${\tiny{\,s\,\brack\, r\,}_v}=\frac{[\,s\,]_v!}{[\,r\,]_v![\,s-r\,]_v!}$, if $0\leq r\leq s$;
\item ${\tiny{\,s\,\brack\, r\,}_v}=0$, if $0\leq s\leq r$;
\item ${\tiny{\,s\,\brack\, r\,}_v}=(-1)^r{\tiny{\,-s+r-1\brack r\,}_v}$ if $s<0$.
\end{enumerate}

For any $q\in\Bbbk^\times,\, n\in \mathbbm{N}$ and $s, \,r\in \mathbbm{Z}$, we denote by
\begin{displaymath}
[\,n\,]:=[\,n\,]_{v=q},\quad[\,n\,]!:=[\,n\,]_{v=q}! \quad \text{and}\quad
{\tiny{\,s\,\brack\, r\,}}:={\tiny{\,s\,\brack\, r\,}_{v=q}}.
\end{displaymath}
Note that the $q$-binomial coefficients ${\tiny{\,n\,\brack\, r\,}}$, where $0\leq r\leq n$, satisfy
\begin{displaymath}
{\,n\,\brack\, r\,}=q^{r-n}{\,n{-}1\,\brack\, r{-}1\,}+q^r{\,n{-}1\,\brack\, r\,}.
\end{displaymath}
The second author introduced the \textit{characteristic} of $q$, see \cite{Hu},
which is defined as the minimal integer $\ell\in \mathbbm{Z}_+$ such that $[\,\ell\,]=0$, denoted by $\bold{char}(q)$.
It is clear that $\bold{char}(q)=0$ if and only if $q$ is generic. If $q\neq\pm1$, then the identity
$\bold{char}(q)=\ell>0$ implies that either $q$ is a $2\ell$-th primitive root of unity or $q$ is an $\ell$-th
primitive root of unity with $\ell$ odd.

\begin{lemma} $($\cite{GH, Hu,Lus,Lus2}$)$
Assume that $q\in\Bbb k^\times$ and $\bold{char}(q)=\ell\geq3$.
\begin{enumerate}[$(i)$]
\item If $s=s_0+s_1\ell,\, r=r_0+r_1\ell$ with $0\leq s_0,\; r_0<\ell,\; s_1,\; r_1\in\mathbbm{Z_+}$,
and $s\geq r$, then we have ${\tiny{\,s\,\brack\, r\,}}={\tiny{s_0 \brack r_0}\binom{s_1}{r_1}}$
when $q$ is an $\ell$-th primitive root of unity with $\ell$ odd and ${\tiny{\,s\,\brack\, r\,}}
=(-1)^{(s_1+1)r_1\ell+s_0r_1-r_0s_1}{\tiny{\,s_0 \,\brack\, r_0\,}\binom{s_1}{r_1}}$ when $q$ is a $2\ell$-th
primitive root of unity, where ${s_1\choose r_1}$ is an ordinary binomial coefficient.
\item If $s=s_0+s_1\ell$, with $0\leq s_0<\ell,\,s_1\in\mathbbm{Z}$, then we
have ${\tiny{\,s\,\brack\, \ell\,}}=s_1$ when $q$ is an $\ell$-th primitive root of unity with $\ell$ odd
and ${\tiny{\,s\,\brack\, \ell\,}}=(-1)^{(s_1+1)\ell+s_0}s_1$ when $q$ is a $2\ell$-th primitive root of unity.
\item If $s=s_0+s_1\ell,\,s'=s'_0+s'_1\ell\in\mathbbm{Z}$ with $0\leq s_0,\;s'_0<\ell$
satisfy $q^s=q^{s'},\,{\tiny\begin{matrix}\begin{bmatrix}
s\\
\ell\end{bmatrix}
\end{matrix}=\begin{matrix}\begin{bmatrix}
s'\\
\ell\end{bmatrix}
\end{matrix}}$, then $s=s'$ if $\ell$ is odd or $\ell$ is even but $s_1s'_1\geq0$; and $s'=\overline{s}=s_0-s_1\ell$
if $\ell$ is even but $s_1s'_1<0$.
\end{enumerate}
\end{lemma}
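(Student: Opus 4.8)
The plan is to transport everything to ordinary (Gaussian) binomial coefficients at a primitive root of unity, where the classical Lucas theorem is available, and then to translate back through the balanced normalization used here, tracking the sign that appears once $q^{\ell}=-1$. The starting observation is the elementary identity
\[
{\,s\,\brack\, r\,}=q^{-r(s-r)}\binom{s}{r}_{q^2},\qquad \binom{s}{r}_{Q}:=\prod_{i=1}^{r}\frac{Q^{\,s-i+1}-1}{Q^{\,i}-1},
\]
obtained by pulling the factor $v^{\,2i-s-1}$ out of the $i$-th term in the defining product and setting $v=q$, $Q=q^2$. The decisive structural point is that $q^2$ is a \emph{primitive} $\ell$-th root of unity in both admissible cases: $\gcd(2,\ell)=1$ when $q$ is a primitive $\ell$-th root with $\ell$ odd, and $q^2$ has order $\ell$ when $q$ is a primitive $2\ell$-th root.

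With $Q=q^2$ a primitive $\ell$-th root of unity one has $\prod_{i=0}^{\ell-1}(1-Q^{i}t)=1-t^{\ell}$, so writing $s=s_0+s_1\ell$ and grouping $\prod_{i=0}^{s-1}(1-Q^{i}t)$ into $s_1$ blocks of length $\ell$ and a tail of length $s_0$ gives $\prod_{i=0}^{s-1}(1-Q^{i}t)=(1-t^{\ell})^{s_1}\prod_{i=0}^{s_0-1}(1-Q^{i}t)$. Comparing the coefficient of $t^{r}$, with $r=r_0+r_1\ell$, on the two sides (and checking that the stray powers of $Q$ and the signs cancel, which they do precisely because $q^{2\ell}=1$) yields the Gaussian $q$-Lucas identity $\binom{s}{r}_{q^2}=\binom{s_0}{r_0}_{q^2}\binom{s_1}{r_1}$, with $\binom{s_1}{r_1}$ an ordinary binomial. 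This is the computational core, and I would cite it from \cite{Lus,Lus2} rather than re-derive it.

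For (i) I would substitute the conversion identity twice and obtain
\[
{\,s\,\brack\, r\,}=q^{-r(s-r)+r_0(s_0-r_0)}\,{\,s_0\,\brack\, r_0\,}\binom{s_1}{r_1}.
\]
A direct expansion shows the exponent $-r(s-r)+r_0(s_0-r_0)$ is a multiple of $\ell$, so the prefactor equals $(q^{\ell})^{N}$ for an explicit integer $N$; it is $1$ when $q^{\ell}=1$ (the odd $\ell$-th root case) and $(-1)^{N}$ when $q^{\ell}=-1$, and reducing $N$ modulo $2$ reproduces the stated exponent $(s_1+1)r_1\ell+s_0r_1-r_0s_1$. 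Statement (ii) is then the specialization $r_0=0$, $r_1=1$, for which $\binom{s_1}{1}=s_1$ and ${\,s_0\,\brack\,0\,}=1$; the case $s_1<0$ is reduced to $s_1\ge0$ by the reflection ${\,s\,\brack\, r\,}=(-1)^{r}{\,-s+r-1\,\brack\, r\,}$ recorded earlier, after which one only has to check that the signs agree.

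Finally, (iii) is a matching argument resting on (ii). From $q^{s}=q^{s'}$ one reads off that $s\equiv s'$ modulo the order of $q$, namely $\ell$ in the odd case and $2\ell$ in the $2\ell$-th root case, forcing $s_0=s'_0$ (and $s_1\equiv s'_1 \bmod 2$ in the latter). Feeding ${\,s\,\brack\,\ell\,}={\,s'\,\brack\,\ell\,}$ into the value $\pm s_1$ from (ii) then constrains $s_1$ against $s'_1$ and gives $s=s'$ at once in the odd case. The main obstacle, and where essentially all the delicacy lies, is the $2\ell$-th root case with $\ell$ even: there the parity-dependent sign in (ii) lets the invariants fix $s_1$ only up to its sign, so when $s_1$ and $s'_1$ have opposite signs the data cannot distinguish $s$ from its conjugate $\overline{s}=s_0-s_1\ell$, which is exactly the alternative asserted. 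Carefully verifying these sign rules in the $2\ell$-th root case is the one step I expect to demand real care.
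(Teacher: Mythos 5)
The paper itself contains no proof of this lemma: it is quoted as known, with attribution to \cite{GH}, \cite{Hu}, \cite{Lus}, \cite{Lus2}, so there is no internal argument to compare yours against, and your proposal has to stand on its own. For parts (i) and (ii) it does. The conversion ${\,s\,\brack\,r\,}=q^{-r(s-r)}\binom{s}{r}_{q^2}$ is a valid Laurent-polynomial identity, $q^{2}$ is a primitive $\ell$-th root of unity in both admissible cases, and the Gaussian $q$-Lucas theorem gives $\binom{s}{r}_{q^2}=\binom{s_0}{r_0}_{q^2}\binom{s_1}{r_1}$. Writing $-r(s-r)+r_0(s_0-r_0)=-\ell N$ with $N=r_0(s_1{-}r_1)+r_1(s_0{-}r_0)+r_1(s_1{-}r_1)\ell$, one checks $N\equiv (s_1{+}1)r_1\ell+s_0r_1-r_0s_1 \pmod 2$, so the prefactor $q^{-\ell N}$ is $1$ when $q^{\ell}=1$ and the stated sign when $q^{\ell}=-1$; and the reflection ${\,s\,\brack\,r\,}=(-1)^{r}{\,-s+r-1\,\brack\,r\,}$ does reduce $s_1<0$ in (ii) to the positive case with matching signs. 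All of this is correct.

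The genuine gap is in (iii), at exactly the step you deferred. Your claim that for $\ell$ even ``the parity-dependent sign in (ii) lets the invariants fix $s_1$ only up to its sign'' is false, and it contradicts your own part (ii): when $\ell$ is even, $(-1)^{(s_1+1)\ell+s_0}=(-1)^{s_0}$, which does not depend on $s_1$ at all. Hence, once $q^{s}=q^{s'}$ forces $s_0=s'_0$ and $s_1\equiv s'_1\pmod 2$, the equality ${\,s\,\brack\,\ell\,}={\,s'\,\brack\,\ell\,}$ reads $(-1)^{s_0}s_1=(-1)^{s_0}s'_1$ and pins down $s_1=s'_1$ exactly. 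In fact (ii) implies ${\,\overline{s}\,\brack\,\ell\,}=-{\,s\,\brack\,\ell\,}$, because replacing $s_1$ by $-s_1$ flips the factor $s_1$ but leaves the sign unchanged (the exponents differ by the even number $2s_1\ell$); concretely, for $\ell=4$ and $q$ a primitive $8$-th root of unity one has ${\,4\,\brack\,4\,}=1$ while ${\,-4\,\brack\,4\,}={\,7\,\brack\,4\,}=-1$, even though $q^{4}=q^{-4}$. So the invariant pair $(q^{s},{\,s\,\brack\,\ell\,})$ does separate $s$ from its conjugate $\overline{s}$ whenever $s_1\neq0$, and the mechanism you invoke to produce the alternative $s'=\overline{s}$ never occurs. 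A correct argument under the paper's conventions shows the hypotheses force $s=s'$ in every case; this proves the first alternative of (iii) and shows the second is vacuous (its side condition $s_1s'_1<0$ is incompatible with the hypotheses), whereas your write-up asserts a nonexistent ambiguity. A smaller omission: your case split overlooks the configuration where $\ell$ is odd but $q$ is a primitive $2\ell$-th root of unity, which $\textbf{char}(q)=\ell$ allows; there the order of $q$ is $2\ell$ and the signed formula of (ii) applies, but $s_0=s'_0$ together with $s_1\equiv s'_1\pmod 2$ again cancels the signs and yields $s=s'$.
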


\subsection{Quantum (restricted) divided power algebras}\label{s2.5}

From \cite[Subsection~2.1]{Hu},
for any $\beta=(\beta_1,\cdots,\beta_m),\,\gamma=(\gamma_1,\cdots,\gamma_m)\in\mathbbm{Z}^m$,
one can define a map $\ast:\mathbbm{Z}^m\times\mathbbm{Z}^m\to\mathbbm{Z}$ as follows:
\begin{displaymath}
\beta\ast\gamma=\sum_{j=1}^{m-1}\sum_{i>j}\beta_i\gamma_j.
\end{displaymath}
Denote by $\epsilon_i=(0,\cdots,0,\underset{i}{1},0\cdots,0)$.
By definition, one has the following property
\begin{lemma} $($\cite[Lemma~2.1]{Hu}$)$
The product $\ast$ satisfies the following distributive laws:
\begin{eqnarray}
&(\beta+\gamma)\ast\zeta=\beta\ast\zeta+\gamma\ast\zeta,\nonumber\\
&\beta\ast(\gamma+\zeta)=\beta\ast\gamma+\beta\ast\zeta;\nonumber
\end{eqnarray}
in particular,
\begin{eqnarray}
&&\epsilon_i\ast\beta=\sum_{s<i}\beta_s,\quad\beta\ast\epsilon_i=\sum_{s>i}\beta_s\quad(1\leq i\leq m)\label{eq1}\\
&&(\epsilon_i-\epsilon_{i+1})\ast\beta=-\beta_i,\quad
\beta\ast(\epsilon_i-\epsilon_{i+1})=\beta_{i+1}\quad(1\leq i< m).\nonumber
\end{eqnarray}
\end{lemma}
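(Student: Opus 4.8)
The plan is to exploit the manifest bilinearity of $\ast$ and then specialise to standard basis vectors; no induction is needed. First I would rewrite the defining double sum in the compact form $\beta\ast\gamma=\sum_{1\leq j<i\leq m}\beta_i\gamma_j$, which simply records that the inner index $i$ runs over those coordinates strictly larger than $j$. In this form every summand $\beta_i\gamma_j$ is separately linear in the coordinates of $\beta$ and in those of $\gamma$, so $\ast$ is $\mathbbm{Z}$-bilinear as a finite sum of bilinear terms. Both distributive laws $(\beta+\gamma)\ast\zeta=\beta\ast\zeta+\gamma\ast\zeta$ and $\beta\ast(\gamma+\zeta)=\beta\ast\gamma+\beta\ast\zeta$ are then immediate by distributing each coordinate over the sum.

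For the particular values I would substitute the standard basis vectors, whose coordinates are $(\epsilon_i)_k=\delta_{ik}$. Putting $\beta=\epsilon_i$ into $\sum_{j<k}\beta_k\gamma_j$ (renaming the inner dummy to $k$) collapses the outer sum to the single term $k=i$, leaving $\sum_{j<i}\gamma_j$; renaming $\gamma$ back to $\beta$ gives $\epsilon_i\ast\beta=\sum_{s<i}\beta_s$. Symmetrically, taking $\gamma=\epsilon_i$ kills the inner sum via $(\epsilon_i)_j=\delta_{ij}$ and yields $\beta\ast\epsilon_i=\sum_{s>i}\beta_s$. In the boundary cases $i=1$ (resp.\ $i=m$) the relevant index range is empty and the formula correctly returns $0$.

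Finally, the two difference identities follow by feeding these evaluations into the distributive laws already established and telescoping the partial sums: since $\sum_{s<i+1}\beta_s=\sum_{s<i}\beta_s+\beta_i$, one gets $(\epsilon_i-\epsilon_{i+1})\ast\beta=\sum_{s<i}\beta_s-\sum_{s<i+1}\beta_s=-\beta_i$, and since $\sum_{s>i}\beta_s=\beta_{i+1}+\sum_{s>i+1}\beta_s$, one gets $\beta\ast(\epsilon_i-\epsilon_{i+1})=\sum_{s>i}\beta_s-\sum_{s>i+1}\beta_s=\beta_{i+1}$.

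The whole argument is elementary. The only point demanding care is the bookkeeping of the strict inequality $i>j$ in the definition, together with the exact index ranges in the formulas (note the asymmetry $1\leq i\leq m$ for the $\epsilon_i$ evaluations versus $1\leq i<m$ for the difference identities, which reflects the need for $\epsilon_{i+1}$ to exist). I do not anticipate any genuine obstacle beyond this index discipline; in particular, once bilinearity is recorded, the two difference identities require nothing more than the telescoping observations above.
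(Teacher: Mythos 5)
Your proof is correct, and it is exactly the intended argument: the paper gives no proof of its own (it states the lemma ``by definition'' with a citation to \cite[Lemma~2.1]{Hu}), and the direct verification you give --- bilinearity read off from the defining double sum $\beta\ast\gamma=\sum_{1\leq j<i\leq m}\beta_i\gamma_j$, followed by substitution of the basis vectors $\epsilon_i$ and telescoping for the difference identities --- is precisely what that phrase abbreviates. All index bookkeeping in your computation checks out, including the empty-range boundary cases.
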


For any $q \in \Bbbk^\times$, the \textit{quantum divided power algebra} $\mathcal{A}_q{(m)}$
is defined as a $\Bbbk$-vector space with the monomial basis $\{x^{(\beta)}\mid\beta\in\mathbbm Z_+^m \}$
with $x^{(0)}=1$, see \cite{Hu}. Its multiplication is given by
\begin{equation}\label{eq2}
x^{(\beta)}x^{(\gamma)}=q^{\beta\ast\gamma}{
\beta+\gamma\brack
\beta}x^{(\beta+\gamma)},
\end{equation}
where ${\tiny{\beta+\gamma\brack\beta}}:=\prod_{i=1}^m{\tiny{
\beta_i+\gamma_i\brack\beta_i}}$ and ${\tiny{
\beta_i+\gamma_i\brack\beta_i}}=[\beta_i+\gamma_i]!/[\beta_i]![\gamma_i]!$
for any $\beta_i,\gamma_i\in\mathbbm{Z}_+$. For simplicity, we denote $x^{(\epsilon_i)}$ by $x_i$ for each $i$.

In particular, when $\bold{char}(q)=\ell\geq 3$, the second author introduced a subalgebra
$\mathcal{A}_q{(m,\bold {l})}$ of $\mathcal{A}_q{(m)}$ with the basis
$\{\,x^{(\beta)}\mid\beta\in\mathbbm Z_+^m, \ \beta\leq\tau(m)\,\}$, where
$\tau(m):=(\ell-1,\cdots,\ell-1)\in\mathbbm Z_+^m $, called the quantum \textit{restricted} divided power
algebra. By $\beta\leq\gamma$ we mean $\beta_i\leq\gamma_i$ for all $i$. Since $[\,\ell\,]=0$, it is clear that
$\dim\mathcal{A}_q(m,\bold {l})=\ell^m$.

\subsection{$q$-Derivatives on $\mathcal{A}_q{(m)}$}

For each $i\in I_0$, following \cite{Hu}, we
define the algebra automorphism $\sigma_i$ of $\mathcal{A}_q{(m)}$ as
\begin{eqnarray}
&\sigma_i(x^{(\beta)})=q^{(\beta,\,\epsilon_i)}x^{(\beta)}=q^{\beta_i}x^{(\beta)},\quad\forall\; x^{(\beta)}\in\mathcal{A}_q{(m)},
\end{eqnarray}
and the special $q$-derivative $\partial_q/\partial x_i$ over $\mathcal{A}_q{(m)}$ as
\begin{align}
\frac{\partial_q}{\partial x_i}(x^{(\beta)})&=q^{-\epsilon_i\ast\beta}x^{(\beta-\epsilon_i)},\quad\forall\; x^{(\beta)}\in\mathcal{A}_q{(m)}.
\end{align}
The $U_q(\mathfrak{g})$-module algebra structure of $\mathcal{A}_q(m)$ can be realized by virtue of the
generators $\sigma_i^{\pm 1},\Theta(\pm\epsilon_i), x_i,\partial_i$ in the quantum Weyl
algebra $\mathcal{W}_q(2m)$ defined in \cite{Hu}, where $\mathfrak g=\mathfrak{gl}(m)$, or $\mathfrak{sl}(m)$.

\begin{theorem}\label{th1} $($\cite[Corollary~4.1]{Hu}$)$
For any monomial $x^{(\beta)}\in\mathcal{A}_q(m)$, $i\in I_0$, $1\leq j<m$, set
\begin{align}
E_j(x^{(\beta)})&= (x_j\partial_{j+1}\sigma_j)(x^{(\beta)})=[\,\beta_j{+}1\,]x^{(\beta+\epsilon_j-\epsilon_{j+1})},\label{r1}\\
F_j(x^{(\beta)})&= (\sigma_j^{-1}x_{j+1}\partial_j)(x^{(\beta)})=[\,\beta_{j+1}{+}1\,]x^{(\beta-\epsilon_j+\epsilon_{j+1})},\label{r1}\\
K_i(x^{(\beta)})&= \sigma_i(x^{(\beta)})=q^{\beta_i}x^{(\beta)},\\
K_i^{-1}(x^{(\beta)})&= \sigma_i^{-1}(x^{(\beta)})=q^{-\beta_i}x^{(\beta)}.\label{r3}
\end{align}
Formulae \eqref{r1}---\eqref{r3} define a $U_q(\mathfrak{gl}(m))$-module structure on $\mathcal{A}_q(m)$.
\end{theorem}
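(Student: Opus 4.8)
The plan is to verify that the four families of operators in \eqref{r1}--\eqref{r3} satisfy, as endomorphisms of $\mathcal A_q(m)$, the defining relations of $U_q(\mathfrak{gl}(m))$, i.e.\ the relations $(R1)$--$(R5)$ obtained by specializing the $U_q(\mathfrak{gl}(m|n))$-presentation to $n=0$; since $\mathfrak{gl}(m)$ has no odd simple root, one has $p(E_i)=p(F_i)=0$ for all $i\in J$, so the sign in $(R3)$ is trivial and relations $(R6)$, $(R7)$ are vacuous. As a preliminary step I would confirm the simplified action formulae themselves by composing the definitions of $\sigma_i$, of $\partial_q/\partial x_i$, and of left multiplication by $x_{j+1}$ from Subsection~2.6, checking that the accumulated $q$-powers cancel to leave the stated coefficients $[\,\beta_j{+}1\,]$ and $[\,\beta_{j+1}{+}1\,]$. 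After that, every relation becomes a finite identity among products of the $q$-numbers $[\,n\,]$, to be checked on an arbitrary basis vector $x^{(\beta)}$. The Cartan relations $(R1)$ are immediate since each $K_i=\sigma_i$ acts diagonally, and $(R2)$ follows at once by comparing how $\sigma_i$ rescales before and after the weight-shift produced by $E_j$ or $F_j$.

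For the crucial relation $(R3)$ with $i=j$, I would compute both composites directly. Writing $a=\beta_j$ and $b=\beta_{j+1}$, one finds $F_jE_j(x^{(\beta)})=[\,a{+}1\,][\,b\,]\,x^{(\beta)}$ and $E_jF_j(x^{(\beta)})=[\,a\,][\,b{+}1\,]\,x^{(\beta)}$, so the commutator acts by the scalar $[\,a\,][\,b{+}1\,]-[\,a{+}1\,][\,b\,]$. The verification then reduces to the $q$-number identity $[\,a\,][\,b{+}1\,]-[\,a{+}1\,][\,b\,]=[\,a{-}b\,]$; and since $\mathcal K_j=K_jK_{j+1}^{-1}$ acts by $q^{\beta_j-\beta_{j+1}}$, one recognizes the right-hand side as precisely $\tfrac{\mathcal K_j-\mathcal K_j^{-1}}{q-q^{-1}}(x^{(\beta)})$. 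For $i\neq j$ (where $\delta_{ij}=0$) I would check $E_iF_j=F_jE_i$ by observing that the two operators shift disjoint, or only singly-overlapping, coordinate pairs, and that in the overlapping case $|i-j|=1$ the rescaling factors are insensitive to the shared coordinate, so the two orders of composition yield identical monomials with identical coefficients.

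Relation $(R4)$ for $|i-j|>1$ is immediate, since the operators then act on disjoint coordinate pairs and hence commute. For the quantum Serre relations $(R5)$ with $j=i{+}1$, I would apply the three words $E_i^2E_j$, $E_iE_jE_i$, $E_jE_i^2$ to $x^{(\beta)}$; each produces the same monomial $x^{(\beta+2\epsilon_i-\epsilon_{i+1}-\epsilon_{i+2})}$, and setting $b=\beta_{i+1}$ the coefficients factor as the common factor $[\,\beta_i{+}1\,][\,\beta_i{+}2\,]$ times $[\,b{+}1\,]$, $[\,b\,]$, $[\,b{-}1\,]$ respectively. The alternating sum then collapses to $[\,\beta_i{+}1\,][\,\beta_i{+}2\,]\bigl([\,b{+}1\,]-(q{+}q^{-1})[\,b\,]+[\,b{-}1\,]\bigr)$, which vanishes by the three-term recurrence $[\,b{-}1\,]+[\,b{+}1\,]=(q{+}q^{-1})[\,b\,]$. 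The case $j=i{-}1$ is entirely symmetric: the common factor becomes $[\,\beta_{i-1}{+}1\,][\,\beta_i{+}1\,]$ and the residual sum reduces to the same recurrence (shifted by one), so the analogous $F$-relations follow in the same way.

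I expect the main obstacle to be organizational rather than conceptual: the Serre-relation check demands careful bookkeeping of how the successive index-shifts interact at the shared coordinate, so that the three coefficients are read off correctly \emph{before} the common factor is extracted; a single mis-tracked $\pm1$ in an intermediate index would spoil the clean factorization. Once the coefficients are recorded accurately, however, every relation collapses to one of the two elementary $q$-identities above, so no genuine estimate or structural argument is required — the entire content lies in keeping the multi-index arithmetic straight across all the non-commuting compositions.
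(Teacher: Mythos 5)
Your proposal is correct in substance, and it follows the only reasonable route: the present paper does not prove this statement at all (it is imported verbatim from \cite[Corollary~4.1]{Hu}), and both the cited source and the paper's own super-analogue (Theorem 27, step (II) of its proof) proceed exactly as you do, by checking the defining relations on the monomial basis $\{x^{(\beta)}\}$ and reducing everything to $q$-number identities. Your key computations are right: with $a=\beta_j$, $b=\beta_{j+1}$ one has $E_jF_j(x^{(\beta)})=[\,a\,][\,b{+}1\,]x^{(\beta)}$ and $F_jE_j(x^{(\beta)})=[\,a{+}1\,][\,b\,]x^{(\beta)}$, the identity $[\,a\,][\,b{+}1\,]-[\,a{+}1\,][\,b\,]=[\,a{-}b\,]$ gives $(R3)$ with $\mathcal K_j$ acting by $q^{a-b}$, and for the Serre relation the three coefficients $[\,b{+}1\,],[\,b\,],[\,b{-}1\,]$ against the common factor $[\,\beta_i{+}1\,][\,\beta_i{+}2\,]$ collapse by $[\,b{+}1\,]+[\,b{-}1\,]=(q{+}q^{-1})[\,b\,]$.

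Two points in your sketch need repair, though neither is fatal. First, your justification of $E_iF_j=F_jE_i$ for $|\,i{-}j\,|=1$ --- ``the rescaling factors are insensitive to the shared coordinate'' --- is false in the sub-case $j=i{-}1$: there the shared coordinate is $\beta_i$, and \emph{both} coefficients, $[\,\beta_i{+}1\,]$ for $E_i$ and $[\,\beta_{j+1}{+}1\,]=[\,\beta_i{+}1\,]$ for $F_{i-1}$, depend on it. Commutation still holds, but for a different reason: both operators raise $\beta_i$ by one and each contributes the bracket of its current value plus one, so either order of composition produces $[\,\beta_i{+}1\,][\,\beta_i{+}2\,]$ times the same monomial. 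Second, be careful about which form of $(R2)$ you verify: on this module one finds $K_iE_jK_i^{-1}=q^{\delta_{ij}-\delta_{i,j+1}}E_j$, which is the correct $\mathfrak{gl}(m)$ weight relation (and the one needed so that $\mathcal K_jE_j\mathcal K_j^{-1}=q^2E_j$, consistent with $(R3)$ and $(R5)$); the relation $(R2)$ as printed in Subsection 2.3, with exponent $\delta_{ij}$ alone, drops the $\delta_{i,j+1}$ term, so a literal check against the printed presentation specialized to $n=0$ would spuriously fail (e.g.\ $K_{j+1}E_jK_{j+1}^{-1}=q^{-1}E_j\neq E_j$). You must verify against the standard presentation, as in \cite{Hu}, which your operators do satisfy.
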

This theorem also equips $\mathcal{A}_q(m,\bold {1})$ with
a $\mathfrak{u}_q(\mathfrak{gl}(m))$-module
algebra at roots of $1$, where $\mathfrak{u}_q(\mathfrak{gl}(m)):
=U_q(\mathfrak{gl}(m))/(E_j^\ell, \,F_j^\ell, \,K_i^{2\ell}-1,\,\forall\,j<m, i\in I_0)$ is
the restricted quantum algebra which is still a Hopf algebra.

Set $|\,\beta\,|:=\sum_{i=1}^m\beta_i$ and $N:=|\,\tau(m)\,|=m(\ell{-}1)$. We denote by $U:=U_q(\mathfrak{gl}(m))$ or
$\mathfrak{u}_q(\mathfrak{gl}(m))$ and $\mathcal{A}_q:=\mathcal{A}_q(m)$ or $\mathcal{A}_q(m,\bold{1})$,
respectively. For each $t\in\mathbbm{Z_+}$,
let $\mathcal{A}_q^{(t)}$ denote by the subspace of $\mathcal{A}_q$ spanned by homogeneous elements
of degree $t$, that is, $\mathrm{span}_\Bbbk\{x^{(\beta)}\in\mathcal{A}_q\mid |\,\beta\,|=t\}.$
Thus we have $\mathcal{A}_q(m,\bold 1)^{(t)}=0$ if $t>N$.

\begin{proposition} $($\cite[Proposition~4.2]{Hu}$)$\label{pr1}
Each subspace $\mathcal{A}_q^{(t)}$ is a $U$-submodule of $\mathcal{A}_q$.
\begin{enumerate}[$(i)$]
\item If $\bold{char}(q)=0$, then each submodule $\mathcal{A}_q(m)^{(t)}\cong V(t\omega_1)$ is a simple
module generated by highest weight vector $x^{(\bold{t})}$, where
$\bold{t}=(t,0,\cdots,0)$.
\item If $\bold{char}(q)=\ell\geq3$, then the submodule $\mathcal{A}_q(m,\bold{1})^{(t)}\cong
V((\ell-1-t_i)\omega_{i-1}+t_i\omega_i)$  is a simple module generated by highest weight vector
$x^{(\bold{t})}$, where $t=(i-1)(\ell-1)+t_i$ with $0\leq t_i\leq \ell-1$ for all $i\in I_0$ and
$\bold{t}=(\ell-1,\cdots,\ell-1,\underset{i}{t_i},0,\cdots,0)$.
\end{enumerate}
\end{proposition}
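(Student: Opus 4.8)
The plan is to leverage the explicit action recorded in Theorem \ref{th1}, the observation that every monomial $x^{(\beta)}$ is a weight vector, and a combinatorial connectivity argument on the set of exponent vectors. First I would dispose of the submodule claim: each $E_j$ (resp.\ $F_j$) sends $x^{(\beta)}$ to a scalar multiple of $x^{(\beta+\epsilon_j-\epsilon_{j+1})}$ (resp.\ $x^{(\beta-\epsilon_j+\epsilon_{j+1})}$), and since $\epsilon_j-\epsilon_{j+1}$ has zero coordinate sum, the total degree $|\beta|$ is preserved, while each $K_i^{\pm1}$ acts diagonally; hence $\mathcal{A}_q^{(t)}=\mathrm{span}_\Bbbk\{x^{(\beta)}\mid |\beta|=t\}$ is $U$-stable. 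From $K_i(x^{(\beta)})=q^{\beta_i}x^{(\beta)}$ each $x^{(\beta)}$ is a weight vector of weight $\mu_\beta:=\sum_i\beta_i\epsilon_i$, and distinct $\beta$ give distinct $\mu_\beta$; thus every weight space of $\mathcal{A}_q^{(t)}$ is one-dimensional, and every $U$-submodule, being $\mathfrak{h}$-stable, is the span of a subset of the $x^{(\beta)}$.

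Next I would verify that $x^{(\bold{t})}$ is a highest weight vector. Applying $E_j(x^{(\beta)})=[\,\beta_j+1\,]\,x^{(\beta+\epsilon_j-\epsilon_{j+1})}$, each $E_j(x^{(\bold{t})})$ vanishes for one of two reasons: either the target exponent acquires a negative (in case (ii), an over-$(\ell-1)$) coordinate and so is not a basis element, or the scalar $[\,\bold{t}_j+1\,]$ is zero. In case (i) with $\bold{t}=(t,0,\cdots,0)$ every relevant target leaves $\mathbbm{Z}_+^m$; in case (ii) with $\bold{t}=(\ell-1,\cdots,\ell-1,\underset{i}{t_i},0,\cdots,0)$ the coefficient equals $[\,\ell\,]=0$ for $j\leq i-1$ by definition of $\bold{char}(q)=\ell$, while for $j\geq i$ the target leaves the restricted range. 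A short computation using $\epsilon_j=\omega_j-\omega_{j-1}$ then rewrites $\mu_{\bold{t}}$ as $t\omega_1$ in case (i) and as $(\ell-1-t_i)\omega_{i-1}+t_i\omega_i$ in case (ii).

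The crux is irreducibility, which I would establish by a connectivity argument on the exponent set $S_t:=\{\beta\mid |\beta|=t\}$ (in case (ii) intersected with $\{\beta\leq\tau(m)\}$). Call an $E_j$-step $\beta\to\beta+\epsilon_j-\epsilon_{j+1}$ admissible when both endpoints lie in $S_t$; by the action formula this is precisely the condition that its coefficient $[\,\beta_j+1\,]$ and the reverse $F_j$-coefficient are nonzero, using that $[\,k\,]\neq0$ for $1\leq k\leq\ell-1$ whereas $[\,\ell\,]=0$. The key lemma is that from any $\beta\in S_t$ one reaches $\bold{t}$ by admissible $E_j$-steps: the potential $\sum_j j\,\beta_j$ strictly decreases along each such leftward step, and if $p$ denotes the last nonzero coordinate of $\beta$, then the absence of any admissible leftward step forces $\beta_1=\cdots=\beta_{p-1}=\ell-1$ and $\beta_k=0$ for $k>p$, i.e.\ $\beta=\bold{t}$. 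The main obstacle is exactly case (ii), where admissibility can fail because a coefficient hits $[\,\ell\,]=0$; this greedy characterization of the unique local minimum is what circumvents the difficulty.

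Finally I would assemble the pieces. Any nonzero submodule $M$ contains some $x^{(\beta)}$, hence, climbing by admissible $E_j$-steps, contains $x^{(\bold{t})}$; and since every admissible step is reversible with nonzero coefficient, descending by $F_j$-steps from $x^{(\bold{t})}$ recovers every $x^{(\gamma)}$ with $\gamma\in S_t$, so $M=\mathcal{A}_q^{(t)}$. Thus $\mathcal{A}_q^{(t)}$ is cyclic, generated by the highest weight vector $x^{(\bold{t})}$, and has no proper nonzero submodule; being a simple highest weight module it is isomorphic to the irreducible module of the highest weight computed above.
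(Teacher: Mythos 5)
Your proof is correct, and it takes a genuinely different route from the argument this paper relies on. The paper itself does not reprove the proposition (it cites \cite[Proposition~4.2]{Hu}), but the method of that source --- reproduced and extended in this paper's own proof of the super-analogue, Theorem 28 --- is constructive: one writes explicit words in the $F_j$'s carrying $x^{(\bold t)}$ to any given monomial (with coefficients such as $[\,t_1{+}s\,]!\cdots[\,t_{m-1}{+}s\,]!$ checked to be nonzero), explicit words in the $E_j$'s lifting any monomial back to $x^{(\bold t)}$, and then a lexicographic ``simplicity criteria'' step in which a maximal-length word of $E_j$'s, applied to the lex-minimal term of an arbitrary nonzero vector of a submodule, produces a nonzero multiple of the highest weight vector inside that submodule. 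You replace both ingredients: (a) since the $K_i$'s act diagonally and $q^{\beta_i}$ determines $\beta_i$ --- trivially for generic $q$, and at $\textbf{char}(q)=\ell$ because $0\leq\beta_i\leq\ell-1<\textbf{ord}(q)$, a one-line point you should state explicitly rather than leave implicit --- all weight spaces of $\mathcal{A}_q^{(t)}$ are one-dimensional, so any submodule is spanned by the monomials it contains; this eliminates the leading-term argument entirely; (b) the explicit operator words are replaced by your connectivity lemma, where the characterization of admissible steps (both coefficients nonzero precisely when both endpoints are basis elements), the strictly decreasing potential $\sum_j j\,\beta_j$, and the identification of $\bold t$ as the unique terminal configuration (a blocked step at $j$ forces $\beta_{j+1}=0$ or $\beta_j=\ell-1$, which propagates down from the last nonzero coordinate) together show every monomial is joined to $x^{(\bold t)}$ by reversible steps. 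What your route buys is brevity and uniformity: no coefficient bookkeeping, and cases (i) and (ii) are handled by a single mechanism whose only case-sensitive input is when $[\,k\,]$ vanishes. What the paper's route buys is explicitness: the concrete words and coefficients are exactly what get recycled in the harder super case $\Omega_q(m|n)^{(t)}$ of Theorem 28, where one must track how the operators move between the divided-power and exterior factors; your weight-separation argument also extends to that setting, but it would have to be reverified there rather than carried over verbatim.
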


\section{Quantum Grassmann superalgebra and quantum Weyl superalgebra}

\subsection{Quantum exterior superalgebras}\label{s3.1}

The \textit{quantum exterior algebra} is defined as the quotient of the free associative algebra
$\Bbbk\,\{x_{m+1},\cdots,x_{m+n}\}$ by the quadratic ideal consisting of quantum antisymmetric relations as follows
\begin{displaymath}
\Lambda_q(n):=\Bbbk\,\{x_{m+1},\cdots,x_{m+n}\}/
\langle x_i^2,\, x_jx_i+q^{-1}x_ix_j, \,j>i, \ i, j\in I_1\rangle,
\end{displaymath}
which is a (right) comodule-algebra of the quantum general (resp. special) linear group $\mathrm{GL}_q(n)$
(resp. $\mathrm{SL}_q(n))$,
dually, a (left) module-algebra of the quantum algebra $U_q(\mathfrak{gl}(n))$ (cf. \cite{Hu}).

Define
\begin{eqnarray*}
&B_s:=\{\langle i_1,\cdots,i_s\rangle\mid m+1\leq i_1<\cdots<i_s\leq m+n\},\quad \text{\it where } 1\leq s\leq n,\\ &B_0:=\varnothing, \quad\text{and}\quad B(n):=\displaystyle \cup_{s=0}^nB_s.
\end{eqnarray*}
For any $0\leq s\leq n$, if $\mu=\langle i_1,\cdots,i_s\rangle\in B_s$, then we denote by
\begin{displaymath}
x^\mu:=x_{i_1}\cdots x_{i_s}\quad\text{and}\quad {\mu}:=\epsilon_{i_1}+\cdots+\epsilon_{i_s},
\end{displaymath}
with $x^\varnothing=1$ for the case $s=0$. Set $|\mu|=s$. Then the set $\{x^\mu\mid \mu\in B(n)\}$
forms a monomial $\Bbbk$-basis of $\Lambda_q(n)$. Furthermore, if we define
\begin{displaymath}
\Lambda_q(n)_{\overline{0}}:=\mathrm{span}_
{\Bbbk}\{x^\mu\in B_s\mid s\text{ \it even }\}, \qquad
\Lambda_q(n)_{\overline{1}}:=\mathrm{span}_{\Bbbk}\{x^\mu\in B_s\mid s \text{ \it odd }\},
\end{displaymath}
then we have $\Lambda_q(n)=\Lambda_q(n)_{\overline{0}}\oplus
\Lambda_q(n)_{\overline{1}}$, which is exactly an associative $\Bbbk$-superalgebra.

\subsection{Quantum affine $(m|n)$-superspace $A_q^{m|n}$}

The natural representation of $U_q(\mathfrak{gl}(m|n))$ is its $(m+n)$-dimensional vector representation
$\bold V$ which is the analogue of the $\mathfrak{gl}(m|n)$-module $V$. Associated with such $\bold V$, we define
a notion of the so-called quantum affine $(m|n)$-superspace $A_q^{m|n}$, which is a generalization of Manin's
quantum affine $m$-space $A_q^{m|0}$ (see \cite{M}).
\begin{defi}
Associated to the natural $U_q(\mathfrak{gl}(m|n))$-module $\bold V$, the quantum affine $(m|n)$-superspace $A_q^{m|n}$
is defined to be the quotient of the free associative algebra $\Bbbk\{\,v_i\mid i\in I\,\}$ over $\Bbbk$ by the quadratic ideal
$I(\bold V)$ generated by
\begin{equation}
\begin{aligned}\label{eq0}
v_jv_i-qv_iv_j, \quad\textit{for }\ i\in I_0, \ j\in I, \ j>i;\\
v_i^2, \ v_jv_i+qv_iv_j, \quad\textit{for }\ i,\;  j\in I_1, \ j>i.
\end{aligned}
\end{equation}
\end{defi}

Obviously, $A_q^{m|n}\cong A_q^{m|0}\otimes_{\Bbbk} A_{q^{-1}}^{0|n}=A_q^{m|0}\otimes_{\Bbbk} \Lambda_{q^{-1}}(n)=A_q^{m|0}\otimes_{\Bbbk} \Lambda_{q^{-1}}(n)_{\bar 0}\oplus A_q^{m|0}\otimes_{\Bbbk} \Lambda_{q^{-1}}(n)_{\bar 1}$, as vector spaces. In fact, the quantum affine $(m|n)$-superspace $A_q^{m|n}$ is an associative $\Bbbk$-superalgebra which has a natural monomial basis consisting of $\{v^{\langle \alpha, \mu\rangle}:=v^{\alpha}\otimes v^\mu\mid \alpha=(\alpha_1,\cdots,\alpha_m)\in \mathbb Z_+^m, \mu=\langle  \mu_1, \cdots, \mu_n\rangle\in \mathbb Z_2^n\}$, where $v^\alpha=v_1^{\alpha_1}\cdots v_m^{\alpha_m}$ and $v^\mu=v_{m+1}^{\mu_1}\cdots v_{m+n}^{\mu_n}$.
It is convenient for us to consider $(m+n)$-tuple $\langle \alpha, \mu\rangle\in \mathbb Z_+^{m+n}$ with the convention:
 $v^{\langle \alpha, \mu\rangle}=0$ for $\langle \alpha, \mu\rangle\not\in \mathbb Z_+^m\times \mathbb Z_2^n$, and
 extend the $*$-product in Lemma 2 to the $(m+n)$-tuples in $\mathbb Z^{m+n}$. We still have the following
\begin{equation}
\langle\alpha, \mu\rangle * \langle \beta, \nu\rangle=\alpha*\beta+\mu*\nu+\mu*\beta=\alpha*\beta+\mu*\nu+|\mu||\beta|.
\end{equation}
Now we can write down its multiplication formula explicitly on $A_q^{m|n}$ as follows
\begin{equation}
v^{\langle \alpha, \mu\rangle}\cdot v^{\langle \beta, \nu\rangle}
=(-1)^{\mu*\nu}q^{\langle\alpha, \mu\rangle*\langle\beta, \nu\rangle} v^{\langle \alpha+\beta, \mu+\nu\rangle}.
\end{equation}

\subsection{Quantum Grassmann superalgebra}

For convenience to handle simultaneously both cases of $q\in\Bbbk^\times$ being generic or root of unity,
we shall introduce the dual object of the quantum affine $(m|n)$-superspace  $A_q^{m|n}$, so-called quantum
Grassmann superalgebra.
Now we consider the quantum divided power algebra
$\mathcal{A}_q{(m)}$ as an associative $\Bbbk$-superalgebra which is concentrated on $0$-component with a
trivial $\mathbbm{Z}_2$-grading, where
\begin{displaymath}
\mathcal{A}_q{(m)}_{\overline{0}}:=\mathcal{A}_q{(m)}\quad\text{and}\quad\mathcal{A}_q{(m)}_{\overline{1}}:=0,
\end{displaymath}
and so does the quantum restricted divided power algebra $\mathcal{A}_q{(m,\bold{1})}$.

Now construct the tensor space of the
quantum (restricted) divided power algebra and the quantum exterior algebra, that is,
\begin{displaymath}
\Omega_q(m|n):=\mathcal{A}_q(m)\otimes_\Bbbk\Lambda_{q^{-1}}(n).
\end{displaymath}
This tensor space has a natural $\mathbbm{Z}_2$-grading induced from those of $\mathcal{A}_q(m)$
and $\Lambda_{q^{-1}}(n)$, namely,
\begin{displaymath}
\Omega_q(m|n)_{\overline{0}}
=\mathcal{A}_q(m)\otimes \Lambda_{q^{-1}}(n)_{\overline{0}},\quad \Omega_q(m|n)_{\overline{1}}
=\mathcal{A}_q(m)\otimes \Lambda_{q^{-1}}(n)_{\overline{1}}.
\end{displaymath}

\begin{defi}
The quantum Grassmann superalgebra $\Omega_q(m|n)$ is defined as a superspace over $\Bbbk$ with the multiplication given by
\begin{equation}
(x^{(\alpha)}\otimes x^\mu)\cdot(x^{(\beta)}\otimes x^\nu)=q^{\mu*\beta}x^{(\alpha)}x^{(\beta)}\otimes x^\mu x^\nu,
\end{equation}
for any $x^{(\alpha)}, x^{(\beta)}\in\mathcal{A}_q(m),\, x^\mu, x^\nu\in\Lambda_{q^{-1}}(n)$,
which is an associative $\Bbbk$-superalgebra.

When $\text{\bf char}(q)=\ell\geq 3$, $\Omega_q(m|n,\bold{1}):=\mathcal{A}_q(m,\bold {1})\otimes\Lambda_{q^{-1}}(n)$
is a sub-superalgebra, which is referred to as the quantum restricted Grassmann superalgebra.
\end{defi}

Note that our definition of \textit{the quantum Grassmann superalgebra} is different from that defined in \cite{Kac1}.

From Subsections \ref{s2.5} and \ref{s3.1}, we see that the set
\begin{displaymath}
\{\,x^{(\beta)}\otimes x^\mu\mid\beta\in\mathbbm Z_+^m,\, \mu\in \mathbbm Z_2^n\,\}
\end{displaymath}
forms a monomial $\Bbbk$-basis of $\Omega_q(m|n)$,  and the set
\begin{displaymath}
\{\,x^{(\beta)}\otimes x^\mu\mid\beta\in\mathbbm Z_+^m,\,\beta\leq\tau(m),\, \mu\in\mathbbm Z_2^n\,\}
\end{displaymath}
forms a monomial $\Bbbk$-basis of $\Omega_q(m|n,\bold{1})$.

In particular, over the quantum Grassmann superalgebra $\Omega_q$, we will need the parity automorphism
$\tau: \Omega_q\longrightarrow \Omega_q$ of order $2$ defined by
\begin{equation}
\tau(x^{(\alpha)}\otimes x^\mu)=(-1)^{|\mu|}x^{(\alpha)}\otimes x^\mu, \quad\forall\  x^{(\alpha)}\otimes x^\mu\in\Omega_q.
\end{equation}

\subsection{Quantum differential operators on $\Omega_q$}

Set $\Omega_q:=\Omega_q(m|n)$ or $\Omega_q(m|n, \bold 1)$ when $\text{\bf char}(q)=l\ge 3$.
For each $i\in I_1$, and $x^\mu\in\Lambda_{q^{-1}}(n)$,
define the algebra automorphisms $\sigma_i$ and $\tau_i$ on $\Lambda_{q^{-1}}(n)$ as
\begin{equation}
\begin{aligned}\label{eq0}
\sigma_i(x^\mu)=(-q)^{({\mu},\,\epsilon_i)}x^\mu,\\
\tau_i(x^\mu)=(-1)^{(\mu,\epsilon_i)}x^\mu.
\end{aligned}
\end{equation}
Both the algebra automorphisms $\sigma_i\ (i\in I_{0})$ defined on $\mathcal{A}_q$ and $\sigma_i, \ \tau_i\ (i\in I_{1})$
defined on $\Lambda_{q^{-1}}(n)$ can be extended to the quantum Grassmann superalgebra $\Omega_q$.
Indeed, for any $x^{(\beta)}\otimes x^\mu\in\Omega_q$, we have
\begin{equation}
\begin{aligned}\label{eq3}
\sigma_i(x^{(\beta)}\otimes x^\mu):=\sigma_i(x^{(\beta)})\otimes x^\mu,
\quad\text{ \it for } i\in I_0,\\
\sigma_i(x^{(\beta)}\otimes x^\mu):=x^{(\beta)}\otimes \sigma_i(x^\mu),
\quad\text{ \it for } i\in I_1,\\
\tau_i(x^{(\beta)}\otimes x^\mu):=x^{(\beta)}\otimes \tau_i(x^\mu),
\quad\text{ \it for } i\in I_1.
\end{aligned}
\end{equation}
By definition, $\sigma_i\sigma_j=\sigma_j\sigma_i$ ($i, j\in I$); $\tau_i\sigma_j=\sigma_j\tau_i$ ($i\in I_1, j\in I$);
$\tau_i^2=\mathrm{id}$
and $\tau_i\tau_j=\tau_j\tau_i$ ($i, j\in I_1$).

For each $i\in I_1$, define the special $q$-derivative $\partial_q/\partial x_i$ on $\Lambda_{q^{-1}}(n)$ as
\begin{align}\label{eq4}
\frac{\partial_q}{\partial x_i}(x^\mu)=(-q)^{-\epsilon_i*\mu}x^{\mu-\epsilon_i}=\delta_{1,\mu_i}(-q)^{-\epsilon_i*\mu}
x^{\mu-\epsilon_i},
\end{align}
where $\mu\in\mathbb Z_2^n$, and (3.8) will vanish if $\mu-\epsilon_i\not\in\mathbb Z_2^n$.

Now, let us extend both the $q$-derivations $\partial_q/\partial x_i \ (i\in I_0)$ defined on $\mathcal{A}_q$
and $\partial_q/\partial x_i \ (i\in I_1)$ defined on $\Lambda_{q^{-1}}(n)$ to $\Omega_q$, that is, for
any $x^{(\beta)}\otimes x^\mu\in\Omega_q$, we have
\begin{equation}
\begin{split}\label{eq5}
\frac{\partial_q}{\partial x_i}(x^{(\beta)}\otimes x^\mu):&=\frac{\partial_q}{\partial x_i}(x^{(\beta)})\otimes x^\mu,
\quad\text{ \it for } i\in I_0,\\
\frac{\partial_q}{\partial x_i}(x^{(\beta)}\otimes x^\mu):&
=q^{-\epsilon_i*\beta}x^{(\beta)}\otimes\frac{\partial_q}{\partial x_i} (x^\mu),
\quad \text{ \it for } i\in I_1.
\end{split}
\end{equation}
For short, we use the notation $\partial_i$ to denote $\partial_q/\partial x_i$, for each $i$.

By definition (cf. (2.7), (3.8) \& (3.9)), it is easy to check that the $q$-derivations $\partial_i$'s ($i\in I$) defined over
$\Omega_q(m|n)$ satisfy the same relations as (3.1). Actually we have
\begin{proposition}
The associative $\Bbbk$-superalgebra $\mathcal D_q(m|n)$ generated by the $q$-derivations $\partial_i$'s $(i\in I)$
defined over the quantum Grassmann superalgebra $\Omega_q(m|n)$, in which $\mathcal D_q(m|n)=\mathcal D_q(m|n)_{\bar 0}\oplus \mathcal D_q(m|n)_{\bar 1}$ with the generator parity $|\partial_i|=\bar 1$ only for $i\in I_1$, is exactly isomorphic to the quantum affine
$(m|n)$-superspace $A_q^{m|n}$.
\end{proposition}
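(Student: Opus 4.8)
The plan is to establish the isomorphism $\mathcal D_q(m|n)\cong A_q^{m|n}$ by verifying that the generators $\partial_i$ $(i\in I)$, acting as operators on $\Omega_q(m|n)$, satisfy exactly the defining quadratic relations \eqref{eq0} of the quantum affine superspace, and then that they generate a superalgebra of the correct size so that no further relations intervene. First I would compute the composites $\partial_i\partial_j$ directly on an arbitrary basis monomial $x^{(\beta)}\otimes x^\mu$, using the extended definitions \eqref{eq4} and \eqref{eq5} together with the $\ast$-arithmetic of Lemma~2 (its distributive laws and the explicit value $\epsilon_i\ast\beta=\sum_{s<i}\beta_s$). There are three regimes to handle according to the parities of $i$ and $j$: both in $I_0$ (the even/bosonic $q$-derivatives on $\mathcal A_q(m)$), both in $I_1$ (the odd/fermionic derivatives on $\Lambda_{q^{-1}}(n)$), and the mixed case $i\in I_0,\,j\in I_1$ (or vice versa). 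In each regime the two operators $\partial_i\partial_j$ and $\partial_j\partial_i$ lower the same components of the multi-index, so comparing them reduces to comparing the accumulated scalar factors $q^{\pm\epsilon_i\ast(\cdots)}$ and the signs $(-q)^{\pm\epsilon_i\ast(\cdots)}$ picked up along the way.

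The heart of the argument is the scalar bookkeeping. For $i,j\in I_0$ with $j>i$, I would show the commutation factor between $\partial_i\partial_j$ and $\partial_j\partial_i$ is exactly $q^{-1}$ (equivalently $q$ after reindexing), matching the relation $v_jv_i-qv_iv_j$ restricted to the bosonic variables; this is dual to the relation \eqref{eq2} defining $\mathcal A_q(m)$ and is essentially the content already used in \cite{Hu}. For $i,j\in I_1$ the fermionic factor $(-q)^{-\epsilon_i\ast\mu}$ combined with the identity $\mu-\epsilon_i-\epsilon_j$ (which forces $\partial_i^2=0$ since $\mu_i\in\{0,1\}$ kills the repeated index) yields both $\partial_i^2=0$ and the antisymmetry $\partial_j\partial_i+q\partial_i\partial_j=0$, matching the second line of \eqref{eq0}. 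The mixed case is where the sign/scalar interplay is most delicate: here one must track the factor $q^{-\epsilon_i\ast\beta}$ appearing in the second line of \eqref{eq5} against the $\ast$-product formula $\langle\alpha,\mu\rangle\ast\langle\beta,\nu\rangle=\alpha\ast\beta+\mu\ast\nu+|\mu||\beta|$ displayed for $A_q^{m|n}$, and confirm that the cross terms reproduce precisely $v_jv_i-qv_iv_j$ for $i\in I_0,\,j\in I_1$.

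Once the relations are verified, there is a surjective superalgebra homomorphism $\phi\colon A_q^{m|n}\twoheadrightarrow\mathcal D_q(m|n)$ sending $v_i\mapsto\partial_i$, since the $\partial_i$ satisfy the defining relations of $A_q^{m|n}$ and those generate $\mathcal D_q(m|n)$ by construction. To upgrade surjectivity to an isomorphism, I would exhibit a basis of $\mathcal D_q(m|n)$ indexed in the same way as the monomial basis $\{v^{\langle\alpha,\mu\rangle}\}$ of $A_q^{m|n}$, i.e. show that the ordered monomials $\partial^{\langle\alpha,\mu\rangle}:=\partial_1^{\alpha_1}\cdots\partial_m^{\alpha_m}\partial_{m+1}^{\mu_1}\cdots\partial_{m+n}^{\mu_n}$ are linearly independent as operators. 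The cleanest way is a duality/pairing argument: define a nondegenerate pairing between $\mathcal D_q(m|n)$ and $\Omega_q(m|n)$ by $\langle D, f\rangle:=$ (constant term of $D(f)$), and check that $\partial^{\langle\alpha,\mu\rangle}$ is, up to a nonzero scalar, dual to the basis element $x^{(\alpha)}\otimes x^\mu$; linear independence of the latter then forces linear independence of the former. The main obstacle I anticipate is precisely this last independence step in the root-of-unity (restricted) setting, where $\Omega_q(m|n,\bold 1)$ is finite-dimensional and some $q$-integers vanish, so one must argue carefully that $\phi$ remains injective on the relevant graded pieces rather than merely counting dimensions; the sign conventions in the mixed bosonic–fermionic commutations are the most error-prone part of the whole verification and deserve the most careful treatment.
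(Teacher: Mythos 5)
Your proposal is correct, and its first half (the three-case verification of the quadratic relations on basis monomials, with the scalar bookkeeping via the $\ast$-product) is essentially identical to the paper's proof: the paper also splits into $i,j\in I_0$ (quoting Proposition 3.1(3) of \cite{Hu}), the mixed case $i\in I_0,\ j\in I_1$, and $i,j\in I_1$, obtaining $\partial_j\partial_i=q\partial_i\partial_j$, $\partial_j\partial_i=q\partial_i\partial_j$, and $\partial_j\partial_i=-q\partial_i\partial_j$ respectively, together with $\partial_i^2=0$ for $i\in I_1$. Where you genuinely diverge is the final step: the paper disposes of injectivity of the induced surjection $A_q^{m|n}\tto\mathcal D_q(m|n)$ with the single assertion that ``no more other relations among $\partial_i$'s occurred,'' whereas you supply an actual argument, namely the pairing $\langle D,f\rangle:=$ constant term of $D(f)$, under which the ordered monomials $\partial^{\langle\alpha,\mu\rangle}$ are (up to nonzero scalar) dual to the basis $\{x^{(\alpha)}\otimes x^\mu\}$ of $\Omega_q(m|n)$; this is a cleaner and more complete route, since together with the spanning property of ordered monomials (which follows from the relations already verified) it pins down a basis of $\mathcal D_q(m|n)$ matching that of $A_q^{m|n}$. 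One clarification that simplifies your last paragraph: the anticipated obstacle at roots of unity does not arise for this proposition, because $\mathcal D_q(m|n)$ is defined by the action on the \emph{full} quantum Grassmann superalgebra $\Omega_q(m|n)$, not on the restricted one; since the $q$-derivatives on the divided power algebra satisfy $\partial_i(x^{(\beta)})=q^{-\epsilon_i\ast\beta}x^{(\beta-\epsilon_i)}$ with scalars that are powers of $\pm q$ and never $q$-integers, the pairing stays nondegenerate for every $q\in\Bbbk^\times$, and no relation such as $\partial_i^\ell=0$ holds on $\Omega_q(m|n)$ (that relation only appears when one truncates to $\Omega_q(m|n,\bold 1)$, which is exactly why the paper later defines $\mathfrak D_q(m|n,\bold 1)$ as a quotient).
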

\begin{proof} Note that for each $i\in I_1$, we have $\partial_i^2=0$, by (3.9).

(i) When $i, j\in I_0$, $i<j$: by defining formula (3.9) of $\partial_i$ ($i\in I_0$), $\partial_i,\ \partial_j$ are essentially defined on $\mathcal A_q$. We have the usual relations:  $\partial_j\partial_i=q\partial_i\partial_j$,
by Proposition 3.1 (3) in \cite{Hu}.

(ii) When $i\in I_0$, $j\in I_1$ and $i<j$: in this case, noting that $\epsilon_j*\epsilon_i=1$, and by defining formula (3.9), we have
\begin{gather*}
\partial_j\partial_i(x^{(\alpha)}\otimes x^\mu)=q^{-\epsilon_j*(\alpha-\epsilon_i)}\partial_i(x^{(\alpha)})\otimes\partial_j(x^\mu),\\
\partial_i\partial_j(x^{(\alpha)}\otimes x^\mu)=q^{-\epsilon_j*\alpha}\partial_i(x^{(\alpha)})\otimes\partial_j(x^\mu),
\end{gather*}
that is, $\partial_j\partial_i=q\partial_i\partial_j$.

(iii) When $i, j\in I_1$, $i<j$: in this case, noting that $\epsilon_i*\epsilon_j=0$, and by defining formula (3.9), we have
\begin{equation*}
\begin{split}
\partial_j\partial_i(x^{(\alpha)}\otimes x^\mu)&=q^{-(\epsilon_i+\epsilon_j)*\alpha}(-q)^{-\epsilon_i*\mu-\epsilon_j*(\mu-\epsilon_i)}x^{(\alpha)}\otimes x^{\mu-\epsilon_i-\epsilon_j},\\
\partial_i\partial_j(x^{(\alpha)}\otimes x^\mu)&=q^{-(\epsilon_i+\epsilon_j)*\alpha}(-q)^{-\epsilon_j*\mu-\epsilon_i*(\mu-\epsilon_j)}x^{(\alpha)}\otimes x^{\mu-\epsilon_i-\epsilon_j},
\end{split}
\end{equation*}
that is, $\partial_j\partial_i=-q\partial_i\partial_j$.

Combining the relations of three cases (i), (ii) \& (iii) above with relations $\partial_i^2=0$ ($i\in I_1$), and noting that no more other relations among $\partial_i$'s\ $(i\in I)$ occurred, we get the required result.
\end{proof}

In this subsection, we shall give a sufficient description on the properties of the quantum differential operators we defined above. To this end, we extend the bicharacter $\theta_+$ on the abelian group $\mathbb Z^m$ (see the notation $\theta$ in {\bf 2.1} of \cite{Hu}) to the abelian group $\mathbb Z^m\times\mathbb Z_2^n$ using the extended $*$-product (3.2), i.e., we define a mapping $\theta: (\mathbb Z^m\times\mathbb Z_2^n)\times (\mathbb Z^m\times\mathbb Z_2^n)\longrightarrow \Bbbk^*$ by
\begin{equation}
\theta(\langle\alpha,\mu\rangle,\langle\beta,\nu\rangle)
=(-1)^{\mu*\nu-\nu*\mu}q^{(\alpha+\mu)*\beta-(\beta+\nu)*\alpha+\mu*\nu-\nu*\mu}
=\theta_+(\alpha,\beta)\theta_-(\mu,\nu)q^{\mu*\beta-\nu*\alpha},
\end{equation}
where $\theta_+(\alpha,\beta)=q^{\alpha*\beta-\beta*\alpha}$, and $\theta_-(\mu,\nu)=(-q)^{\mu*\nu-\nu*\mu}$.

Associated to the bicharacter $\theta$ on $\mathbb Z^m\times\mathbb Z_2^n$, we can define super-algebra automorphisms
$\Theta(\epsilon_i)$ ($i\in I$) on $\Omega_q(m|n)$ as follows
\begin{equation}
\begin{split}
\Theta(\epsilon_i)(x^{(\alpha)}{\otimes} x^\mu)
&=\theta(\langle\epsilon_i,0\rangle,\langle\alpha,\mu\rangle)(x^{(\alpha)}{\otimes} x^\mu)=\theta_+(\epsilon_i,\alpha)q^{-\mu*\epsilon_i} (x^{(\alpha)}{\otimes} x^\mu),\ \forall\;  i\in I_0;\\
\Theta(\epsilon_i)(x^{(\alpha)}{\otimes} x^\mu)&=\theta(\langle 0,\epsilon_i\rangle,\langle\alpha,\mu\rangle)(x^{(\alpha)}{\otimes}x^\mu)
=\theta_-(\epsilon_i,\mu) q^{\epsilon_i*\alpha}(x^{(\alpha)}{\otimes} x^\mu),\ \forall\;  i\in I_1.
\end{split}
\end{equation}

\begin{remark} Actually, by abuse of notation, considering $\mathbb Z^m\times\mathbb Z_2^n$ as a subset of $\mathbb Z^{m+n}$ and writing $\alpha+\mu:=\langle\alpha,\mu\rangle$, for $\langle\alpha,\mu\rangle\in \mathbb Z^m\times\mathbb Z_2^n$.
In general, we can define the super-algebra automorphisms $\Theta(\alpha+\mu)$ for
$\langle\alpha,\mu\rangle\in\mathbb Z^m\times\mathbb Z_2^n$ by the bicharacter $\theta$ in $(3.10)$.
Furthermore, we can rewrite the mapping $\theta: (\mathbb Z^m\times\mathbb Z_2^n)\times (\mathbb Z^m\times\mathbb Z_2^n)\longrightarrow \Bbbk^*$ as follows:
\begin{equation}
\theta(\epsilon_i,\epsilon_j)=\begin{cases}q^{\epsilon_i*\epsilon_j-\epsilon_j*\epsilon_i}, & \textit{for }\ i\in I_0, j\in I;\\
(-q)^{\epsilon_i*\epsilon_j-\epsilon_j*\epsilon_i}, & \textit{for }\ i, j\in I_1.
\end{cases}
\end{equation}
In this sense, the relations in Proposition 7 can be expressed uniformly as follows
\begin{equation}
\partial_i\partial_j=\theta(\epsilon_i, \epsilon_j)\partial_j\partial_i,\quad \textit{for }\ i\not= j.
\end{equation}
\end{remark}

Similarly to Proposition 3.1 in \cite{Hu}, we have the following
\begin{lemma} $(1)$ For each $i\in I_0$, $\partial_i$ is a $\bigl(\Theta(-\epsilon_i)\sigma_i^{\pm1}, \sigma_i^{\mp1}\bigr)$-derivation of $\Omega_q$, namely,
$$\partial_i\bigl((x^{(\beta)}{\otimes} x^\mu)(x^{(\gamma)}{\otimes} x^\nu)\bigr)=
\partial_i(x^{(\beta)}{\otimes} x^\mu)\sigma_i^{\mp1}(x^{(\gamma)}{\otimes} x^\nu)+
(\Theta(-\epsilon_i)\sigma_i^{\pm1})(x^{(\beta)}{\otimes} x^\mu)\partial_i(x^{(\gamma)}{\otimes} x^\nu).
$$

For each $i\in I_1$, $\partial_i$ is a $\bigl(\Theta(-\epsilon_i)\tau_i, 1\bigr)$-derivation of $\Omega_q$, namely,
$$\partial_i\bigl((x^{(\beta)}{\otimes} x^\mu)(x^{(\gamma)}{\otimes} x^\nu)\bigr)=
\partial_i(x^{(\beta)}{\otimes} x^\mu)(x^{(\gamma)}{\otimes} x^\nu)+
\bigl(\Theta(-\epsilon_i)\tau_i\bigr)(x^{(\beta)}{\otimes} x^\mu)\partial_i(x^{(\gamma)}{\otimes} x^\nu).
$$

$(2)$ $\Theta(\alpha{+}\mu)\Theta(\beta{+}\nu)=\Theta(\alpha{+}\beta{+}\mu{+}\nu)$, in particular, $\Theta(-\alpha_i)=\sigma_i\sigma_{i+1}$, $\alpha_i=\epsilon_i{-}\epsilon_{i+1}$, for $i\in J$ and $i\not=m$; $\Theta(-\alpha_m)=\sigma_m\sigma_{m+1}\tau$, where $\tau=\Pi_{j\in I_1}\tau_j$ is the parity automorphism of $\Omega_q$ in a super root system of type $A(m{-}1|n{-}1)$.

\smallskip

$(3)$ $\Theta(\epsilon_j)\,\partial_i\,\Theta(\epsilon_j)^{-1}=
\theta\,(\epsilon_i, \epsilon_j)\partial_i$; \ $\tau\partial_i\tau^{-1}=(-1)^{\delta_{\bar 1,|\partial_i|}}\partial_i$; \  $\,\sigma_j\partial_i\sigma_j^{-1}=(-1)^{\delta_{\bar 1,|\partial_i|}\delta_{ij}}q^{-\delta_{ij}}\partial_i$, $(i,\, j\in I)$; \ $\tau_i\partial_j=(-1)^{\delta_{ij}}\partial_j\tau_i$, \ $(i\in I_1, j\in I)$.

\smallskip

$(4)$ $(x^{(\alpha)}{\otimes} x^\mu)\bigl((x^{(\beta)}{\otimes} x^\nu)(x^{(\gamma)}{\otimes} x^\eta)\bigr)=\bigl((x^{(\alpha)}{\otimes} x^\mu)(x^{(\beta)}{\otimes} x^\nu)\bigr)(x^{(\gamma)}{\otimes} x^\eta)$

\hskip6.4cm$=\theta(\alpha{+}\mu,\beta{+}\nu)(x^{(\beta)}{\otimes} x^\nu)\bigl((x^{(\alpha)}{\otimes} x^\mu)(x^{(\gamma)}{\otimes} x^\eta)\bigr)
$

\hskip6.4cm$=\Theta(\alpha{+}\mu)(x^{(\beta)}{\otimes} x^\nu)\bigl((x^{(\alpha)}{\otimes} x^\mu)(x^{(\gamma)}{\otimes} x^\eta)\bigr)$.

\smallskip

$(5)$ For each $i\in I_0$, $(x^{(\alpha)}{\otimes} x^\mu)\partial_i$ is a $(\Theta(\alpha{+}\mu{-}\epsilon_i)\sigma_i^{\pm1}, \sigma_i^{\mp1})$-derivation of $\Omega_q$, namely,

\noindent\hskip0.2cm
$\bigl((x^{(\alpha)}{\otimes} x^\mu)\partial_i\bigr)\bigl((x^{(\beta)}{\otimes} x^\nu)(x^{(\gamma)}{\otimes} x^\eta)\bigr)=
\bigl((x^{(\alpha)}{\otimes} x^\mu)\partial_i\bigr)\bigl(x^{(\beta)}{\otimes} x^\nu\bigr)\sigma_i^{\pm1}(x^{(\gamma)}{\otimes} x^\eta)$

\hskip6.2cm$+\bigl(\Theta(\alpha{+}\mu{-}\epsilon_i)\sigma_i^{\mp1}\bigr)(x^{(\beta)}{\otimes} x^\nu)\bigl((x^{(\alpha)}{\otimes} x^\mu)\partial_i\bigr)\bigl(x^{(\gamma)}{\otimes} x^\eta\bigr)$.

\smallskip

For each $i\in I_1$, $(x^{(\alpha)}{\otimes} x^\mu)\partial_i$ is a $(\Theta(\alpha{+}\mu{-}\epsilon_i)\tau_i, 1)$-derivation of $\Omega_q$, namely,

\noindent\hskip0.2cm
$\bigl((x^{(\alpha)}{\otimes} x^\mu)\partial_i\bigr)\bigl((x^{(\beta)}{\otimes} x^\nu)(x^{(\gamma)}{\otimes} x^\eta)\bigr)=
\bigl((x^{(\alpha)}{\otimes} x^\mu)\partial_i\bigr)\bigl(x^{(\beta)}{\otimes} x^\nu\bigr)(x^{(\gamma)}{\otimes} x^\eta)$

\hskip6.25cm$+\bigl(\Theta(\alpha{+}\mu{-}\epsilon_i)\tau_i\bigr)(x^{(\beta)}{\otimes} x^\nu)\bigl((x^{(\alpha)}{\otimes} x^\mu)\partial_i\bigr)\bigl(x^{(\gamma)}{\otimes} x^\eta\bigr)$.

\end{lemma}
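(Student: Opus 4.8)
The plan is to verify all five assertions by direct computation on the monomial basis $\{x^{(\beta)}\otimes x^\mu\}$, exploiting the tensor-product structure $\Omega_q=\mathcal A_q\otimes\Lambda_{q^{-1}}(n)$ so that the even contributions reduce to the already-established calculus on $\mathcal A_q$ (Proposition 3.1 of \cite{Hu}) and only the odd/fermionic part requires genuinely new bookkeeping. I would work in the order $(2),(3),(1),(4),(5)$, since the later parts reuse the earlier ones. I begin with $(2)$: because $\theta$ in $(3.10)$ is a bicharacter, it is multiplicative in its first slot, $\theta(\langle\alpha,\mu\rangle{+}\langle\beta,\nu\rangle,-)=\theta(\langle\alpha,\mu\rangle,-)\,\theta(\langle\beta,\nu\rangle,-)$, and since each $\Theta$ acts diagonally this yields $\Theta(\alpha{+}\mu)\Theta(\beta{+}\nu)=\Theta(\alpha{+}\beta{+}\mu{+}\nu)$ at once. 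The identities $\Theta(-\alpha_i)=\sigma_i\sigma_{i+1}$ for $i\neq m$ and $\Theta(-\alpha_m)=\sigma_m\sigma_{m+1}\tau$ then follow by evaluating $\theta(-\alpha_i,\langle\beta,\nu\rangle)$ on a basis element and comparing the resulting scalar with the product of the eigenvalues of $\sigma_i,\sigma_{i+1}$; when $i=m$ the boundary root $\alpha_m=\epsilon_m-\epsilon_{m+1}$ produces through the sign in $\theta_-$ an extra factor $(-1)^{|\nu|}$, which is exactly the eigenvalue of the parity automorphism $\tau$. The relations $(3.3)$ of Lemma 2 keep these scalar comparisons short.

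Part $(3)$ is a pure eigenvalue computation. The operators $\Theta(\epsilon_j)$, $\sigma_j$ and $\tau_i$ are all diagonal on the monomial basis, while $\partial_i$ shifts the multidegree by $-\epsilon_i$; hence conjugating $\partial_i$ by any diagonal operator multiplies it by the ratio of the eigenvalue at degree $\langle\alpha,\mu\rangle$ to that at degree $\langle\alpha,\mu\rangle-\epsilon_i$. Reading these ratios off from $(3.7)$, $(3.10)$ and $(3.11)$ produces $\theta(\epsilon_i,\epsilon_j)$, the sign $(-1)^{\delta_{\bar 1,|\partial_i|}}$ for conjugation by $\tau$, and the remaining scalars verbatim.

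The substance lies in $(1)$. I would reduce the twisted Leibniz rule to the basis products $(x^{(\beta)}\otimes x^\mu)(x^{(\gamma)}\otimes x^\nu)$ and split into $i\in I_0$ and $i\in I_1$. For $i\in I_0$ the derivative $\partial_i$ acts only on the $\mathcal A_q$ leg; invoking the Leibniz rule on $\mathcal A_q$ from \cite{Hu} and tracking how the coupling factor $q^{\mu*\gamma}$ in the multiplication $(3.5)$ redistributes shows that the missing $q$-powers package precisely into $\Theta(-\epsilon_i)\sigma_i^{\pm1}$ on the left factor and $\sigma_i^{\mp1}$ on the right. For $i\in I_1$ the derivative acts on the exterior leg, and here the new point is the sign incurred when $\partial_i$ is moved past odd generators: the factor $(-q)^{-\epsilon_i*\mu}$ of $(3.8)$ together with the superalgebra sign reorganizes into the parity twist $\tau_i$ and $\Theta(-\epsilon_i)$ on the left summand while leaving the right twist trivial. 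Confirming that the correct left twist is $\tau_i$ rather than $\sigma_i$, and that the right twist is $1$, is the main obstacle; it hinges on applying the distributivity $(3.3)$ of $*$ to the pairs $(\epsilon_i,\mu)$ and $(\epsilon_i,\gamma)$ and carefully matching parities.

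Finally, $(4)$ records the associativity of $\Omega_q$ (already known) together with a braided-commutativity identity: computing the two orderings of a product via $(3.5)$ and taking their ratio yields exactly $\theta(\alpha{+}\mu,\beta{+}\nu)$, which by $(3.11)$ is the scalar by which $\Theta(\alpha{+}\mu)$ acts on $x^{(\beta)}\otimes x^\nu$. Part $(5)$ then follows formally: $(x^{(\alpha)}\otimes x^\mu)\partial_i$ is left multiplication composed with the twisted derivation $\partial_i$, and pushing the left factor through the Leibniz expansion of $(1)$ by means of $(4)$ converts the twist $\Theta(-\epsilon_i)$ into $\Theta(\alpha{+}\mu{-}\epsilon_i)$, giving the stated $(\Theta(\alpha{+}\mu{-}\epsilon_i)\sigma_i^{\pm1},\sigma_i^{\mp1})$- and $(\Theta(\alpha{+}\mu{-}\epsilon_i)\tau_i,1)$-derivation properties.
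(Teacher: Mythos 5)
Your proposal is correct and follows essentially the same route as the paper's own proof: direct verification on the monomial basis, reducing the even case $i\in I_0$ to Proposition 3.1 of \cite{Hu} and the coupling factor in the multiplication, handling the odd case $i\in I_1$ by tracking the $(-q)$-powers and the parity cancellation that forces the left twist to be $\Theta(-\epsilon_i)\tau_i$ with trivial right twist, diagonal-eigenvalue computations for parts $(2)$ and $(3)$, and deriving $(5)$ formally from the earlier parts via the braided commutativity in $(4)$. The only differences are organizational --- your ordering $(2),(3),(1),(4),(5)$ and your uniform ``eigenvalue-ratio under conjugation'' phrasing of $(3)$ in place of the paper's four explicit cases --- not mathematical.
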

\begin{proof} (1)
(i) When $i\in I_0$, by multiplication formula (3.4), defining formula (3.9) of $\partial_i \ (i\in I_0)$, and the fact that $\partial_i$ is a $(\Theta(-\epsilon_i)\sigma_i^{\pm 1}, \sigma_i^{\mp1})$-derivation of $\mathcal A_q$ (see Proposition 3.1 (2) in \cite{Hu}), as well as (3.11) \& (3.7), we have
\begin{equation*}
\begin{split}
LHS&=q^{\mu*\gamma}\partial_i(x^{(\beta)}x^{(\gamma)})\otimes x^\mu x^\nu \\
&=q^{\mu*\gamma}\Bigl(\partial_i(x^{(\beta)})\sigma_i^{\pm 1}(x^{(\gamma)})\otimes x^\mu x^\nu+
\bigl(\Theta(-\epsilon_i)\sigma_i^{\mp1}\bigr)(x^{(\beta)})\partial_i(x^{(\gamma)})\otimes  x^\mu x^\nu\Bigr)\\
&=(\partial_i(x^{(\beta)})\otimes x^\mu)(\sigma_i^{\pm 1}(x^{(\gamma)})\otimes x^\nu)+
\bigl(\Theta(-\epsilon_i)\sigma_i^{\mp1}\bigr)(x^{(\beta)}\otimes x^\mu)\bigl(\partial_i(x^{(\gamma)})\otimes x^\nu\bigr)\\
&=RHS.
\end{split}
\end{equation*}

(ii) When $i\in I_1$, by multiplication formula (3.4), defining formulae (3.9) \& (3.8) of $\partial_i \ (i\in I_1)$, and (3.11), as well as the fact $x^\mu x^\nu=(-q)^{\mu*\nu}x^{\mu+\nu}$, we have
\begin{equation*}
\begin{split}
LHS&=q^{\mu*\gamma-\epsilon_i*(\beta+\gamma)}x^{(\beta)}x^{(\gamma)}\otimes \partial_i(x^\mu x^\nu)\\
&=q^{\mu*\gamma-\epsilon_i*(\beta+\gamma)}(-q)^{\mu*\nu-\epsilon_i*(\mu+\nu)}\delta_{1,\mu_i+\nu_i}x^{(\beta)}x^{(\gamma)}\otimes x^{\mu+\nu-\epsilon_i}\\
&=q^{\mu*\gamma-\epsilon_i*(\beta+\gamma)}x^{(\beta)}x^{(\gamma)}\otimes \bigl(\delta_{1,\mu_i}\delta_{0,\nu_i}\partial_i(x^{\mu})x^\nu+\delta_{0,\mu_i}\delta_{1,\nu_i}\theta_-(-\epsilon_i,\mu)x^\mu \partial_i(x^{\nu})\bigr)\\
&=q^{-\epsilon_i*\beta}(x^{(\beta)}{\otimes} \partial_i(x^\mu))(x^{(\gamma)}{\otimes} \delta_{0,\nu_i}x^\nu)+q^{-\epsilon_i*(\beta+\gamma)}\theta_-(-\epsilon_i,\mu)(x^{(\beta)}{\otimes} \delta_{0,\mu_i}x^\mu)(x^{(\gamma)}{\otimes} \partial_i(x^\nu))\\
&=\partial_i(x^{(\beta)}\otimes x^\mu)(x^{(\gamma)}\otimes \delta_{0,\nu_i}x^\nu)+\theta_-(-\epsilon_i,\mu)q^{-\epsilon_i*\beta}(x^{(\beta)}\otimes \delta_{0,\mu_i}x^\mu)\partial_i(x^{(\gamma)}\otimes x^\nu)\\
&=\partial_i(x^{(\beta)}\otimes x^\mu)(x^{(\gamma)}\otimes x^\nu)+\bigl(\Theta(-\epsilon_i)\tau_i\bigr)(x^{(\beta)}\otimes x^\mu)\partial_i(x^{(\gamma)}\otimes x^\nu)\\
&=RHS.
\end{split}
\end{equation*}

(2) The first one follows from (3.10). For any $i<m$, by Lemma 2.1 (2) \& (3) in \cite{Hu}, we have $\theta_+(-\epsilon_i+\epsilon_{i+1}, \alpha)=q^{\alpha_i+\alpha_{i+1}}$ and $\mu*(\epsilon_i-\epsilon_{i+1})=0$, so (3.11) gives the second claim. For $i>m$, since $(\epsilon_i-\epsilon_{i+1})*\alpha=0$, $-(\epsilon_i-\epsilon_{i+1})*\mu=\mu_i$ and $\mu*(\epsilon_i-\epsilon_{i+1})=\mu_{i+1}$ (ibid.), we also get the second claim,
by (3.11). Now let us look at the interesting case $i=m$: noting $\alpha*\epsilon_m=0$, $\epsilon_{m+1}*\mu=0$ and $\mu*\epsilon_m=|\mu|$ (ibid.), by (3.11) \& (3.5), we have
\begin{equation*}
\begin{split}
\Theta(-\alpha_m)(x^{(\alpha)}\otimes x^\mu)&=\theta_+(-\epsilon_m,\alpha)\theta_-(\epsilon_{m+1},\mu)q^{\epsilon_{m+1}*\alpha+
\mu*\epsilon_m}(x^{(\alpha)}\otimes x^\mu)\\
&=q^{(-\epsilon_m+\epsilon_{m+1})*\alpha+|\mu|}(-q)^{-\mu*\epsilon_{m+1}}(x^{(\alpha)}\otimes x^\mu)\\
&=q^{\alpha_m+|\mu|}(-q)^{-|\mu|+\mu_{m+1}}(x^{(\alpha)}\otimes x^\mu)\\
&=(-1)^{|\mu|}q^{\alpha_m}(-q)^{\mu_{m+1}}(x^{(\alpha)}\otimes x^\mu)\\
&=\sigma_m\sigma_{m+1}\tau(x^{(\alpha)}\otimes x^\mu).
\end{split}
\end{equation*}

(3) For $i,\ j\in I_0$: by (3.11) \& (3.12), we have
\begin{equation*}
\begin{split}
\Theta(\epsilon_j)\partial_i\Theta(-\epsilon_j)(x^{(\alpha)}\otimes x^\mu)&=
\theta_+(-\epsilon_j,\alpha)q^{\mu*\epsilon_j}\Theta(\epsilon_j)(\partial_i(x^{(\alpha)})\otimes x^\mu)\\
&=\theta_+(\epsilon_j,-\epsilon_i)\partial_i(x^{(\alpha)}\otimes x^\mu)\\
&=\theta(\epsilon_i,\epsilon_j)\partial_i(x^{(\alpha)}\otimes x^\mu).
\end{split}
\end{equation*}

For $i\in I_0,\ j\in I_1$: by (3.11) \& (3.12), we have
\begin{equation*}
\begin{split}
\Theta(\epsilon_j)\partial_i\Theta(-\epsilon_j)(x^{(\alpha)}\otimes x^\mu)&=
\theta_-(-\epsilon_j,\mu)q^{-\epsilon_j*\alpha}\Theta(\epsilon_j)(\partial_i(x^{(\alpha)})\otimes x^\mu)\\
&=q^{-\epsilon_j*\epsilon_i}\partial_i(x^{(\alpha)}\otimes x^\mu)\\
&=\theta(\epsilon_i,\epsilon_j)\partial_i(x^{(\alpha)}\otimes x^\mu).
\end{split}
\end{equation*}

For $i\in I_1,\ j\in I_0$: by (3.11), (3.9) \& (3.12), we have
\begin{equation*}
\begin{split}
\Theta(\epsilon_j)\partial_i\Theta(-\epsilon_j)(x^{(\alpha)}\otimes x^\mu)&=
\theta_-(-\epsilon_j,\mu)q^{-\epsilon_j*\alpha-\epsilon_i*\alpha}\Theta(\epsilon_j)(x^{(\alpha)}\otimes \partial_i(x^\mu))\\
&=\theta_-(\epsilon_j,-\epsilon_i)\partial_i(x^{(\alpha)}\otimes x^\mu)\\
&=\theta(\epsilon_i,\epsilon_j)\partial_i(x^{(\alpha)}\otimes x^\mu).
\end{split}
\end{equation*}

For $i,\ j\in I_1$: by (3.11), (3.9) \& (3.12), we have
\begin{equation*}
\begin{split}
\Theta(\epsilon_j)\partial_i\Theta(-\epsilon_j)(x^{(\alpha)}\otimes x^\mu)&=
\theta_-(-\epsilon_j,\mu)q^{-\epsilon_j*\alpha-\epsilon_i*\alpha}\Theta(\epsilon_j)(x^{(\alpha)}\otimes \partial_i(x^\mu))\\
&=\theta_-(\epsilon_j,-\epsilon_i)\partial_i(x^{(\alpha)}\otimes x^\mu)\\
&=\theta(\epsilon_i,\epsilon_j)\partial_i(x^{(\alpha)}\otimes x^\mu).
\end{split}
\end{equation*}

For the second identity: by definition, we have
$$\tau\partial_i\tau^{-1}(x^{(\alpha)}\otimes x^\mu)=(-1)^{\delta_{\bar 1,|\partial_i|}}\partial_i(x^{(\alpha)}\otimes x^\mu).$$

For the third identity: when $i, j\in I_0$, this is the same as {\bf 3.2} (iv) in \cite{Hu};
when $i, j\in I_1$, by definition, we have
$$\sigma_j\partial_i\sigma_j^{-1}(x^{(\alpha)}\otimes x^\mu)=(-1)^{\delta_{\bar 1,|\partial_i|}\delta_{ij}}q^{-\delta_{ij}}\partial_i(x^{(\alpha)}\otimes x^\mu);$$
in both cases when
$i\in I_0, j\in I_1$ or $i\in I_1, j\in I_0$, by definition, we get $\sigma_j\partial_i\sigma_j^{-1}=\partial_i$.

It is obvious to see the last identity: $\tau_i\partial_j=(-1)^{\delta_{ij}}\partial_j\tau_i$, by definition.

\smallskip
(4) is clear.

\smallskip
(5) follows from (2) \& (4).
\end{proof}

\subsection{Quantum differential Hopf algebra $\mathfrak D_q(m|n)$}

Now we are in a position to construct the quantum differential Hopf algebra $\mathfrak D_q(m|n)$ to be the bosonization
of $\mathcal D_q(m|n)$, the latter viewing as a Nichols superalgebra, which generalizes the structure of quantum differential Hopf algebra $\mathfrak D_q(m)$ defined in {\bf 3.2} \cite{Hu} by the second author.

\begin{defi} Let $\mathfrak D_q(m|n)$ be the associative algebra over $\Bbbk$ generated by elements $\partial_i\in \mathcal D_q(m|n),\ \Theta(\pm\epsilon_i),\ \sigma_i\, (i\in I)$, $\tau_j\, (j\in I_1)$, as well as the parity element $\tau=\Pi_{j\in I_1}\tau_j$, associated to the bicharacter
$\theta$ on $\mathbb Z^m\times \mathbb Z_2^n$ given in $(3.10)$, subject to the following relations:
\begin{gather}
\sigma_i\sigma_i^{-1}=1=\sigma_i^{-1}\sigma_i, \quad \sigma_i\sigma_j=\sigma_j\sigma_i, \quad (i, j\in  I), \\
\tau^2=1=\tau_i^2, \quad \tau_i\tau_j=\tau_j\tau_i,\quad (i, j\in I_1), \quad \sigma_i\tau_j=\tau_j\sigma_i,\quad(i\in I, j\in I_1),
\\
\Theta(\epsilon_i)\Theta(-\epsilon_i)=1=\Theta(-\epsilon_i)\Theta(\epsilon_i), \quad
\Theta(\epsilon_i)\Theta(\epsilon_j)=\Theta(\epsilon_i{+}\epsilon_j)=\Theta(\epsilon_j)\Theta(\epsilon_i),\\
\sigma_j\Theta(\epsilon_i)=\Theta(\epsilon_i)\sigma_j, \quad (i, j\in I), \quad \tau_j\Theta(\epsilon_i)=\Theta(\epsilon_i)\tau_j, \quad (i\in I, j\in I_1),\\
\Theta(-\epsilon_m{+}\epsilon_{m+1})=
\tau\sigma_m\sigma_{m+1},\quad\Theta(-\epsilon_i{+}\epsilon_{i+1})=\sigma_i\sigma_{i+1}, \quad (m\not=i\in J), \\
\Theta(\epsilon_j)\partial_i\Theta(\epsilon_j)^{-1}=\theta(\epsilon_i,\epsilon_j)\partial_i, \quad \sigma_j\partial_i\sigma_j^{-1}=(-1)^{\delta_{\bar 1,|\partial_i|}\delta_{ij}}q^{-\delta_{ij}}\partial_i, \quad (i, j\in I),\\
\tau_j\partial_i=(-1)^{\delta_{ij}}\partial_i\tau_j, \quad (i\in I, j\in I_1),\\
\partial_i\partial_j=\theta(\epsilon_i,\epsilon_j)\partial_j\partial_i, \quad (i, j\in I),\\
\partial_j^2=0, \quad (j\in I_1).
\end{gather}

Moreover, $\mathfrak D_q(m|n)$ can be equipped with the following mappings: $\Delta$, $\epsilon$,
$S$ on the generators of $\mathfrak D_q(m|n)$ as
\begin{gather}
\Delta: \mathfrak D_q(m|n)\longrightarrow
\mathfrak D_q(m|n)\otimes \mathfrak D_q(m|n)\\
\Delta(\sigma_i^{\pm 1})=\sigma_i^{\pm1}\otimes \sigma_i^{\pm1}, \quad (i\in I)\qquad \Delta(\tau_j)=\tau_j\otimes \tau_j,\quad (j\in I_1)\notag\\
\Delta(\Theta(\pm\epsilon_i))=\Theta(\pm\epsilon_i)\otimes \Theta(\pm\epsilon_i), \quad (i\in I)\notag\\
\Delta(\partial_i)=\partial_i\otimes \sigma_i^{-1}+\Theta(-\epsilon_i)\sigma_i\otimes \partial_i, \quad (i\in I_0)\notag\\
\Delta(\partial_j)=\partial_j\otimes 1+\Theta(-\epsilon_j)\tau_j\otimes \partial_j, \quad (j\in I_1)\notag\\
\epsilon: \mathfrak D_q(m|n)\longrightarrow \Bbbk\\
\epsilon(\sigma_i^{\pm1})=\epsilon(\Theta(\pm\epsilon_i))=\epsilon(\tau_j)=1,\notag\\
\epsilon(\partial_i)=0,\notag\\
S: \mathfrak D_q(m|n)\longrightarrow \mathfrak D_q(m|n)\\
S(\sigma_i^{\pm1})=\sigma_i^{\mp1},\quad
S(\Theta(\pm\epsilon_i))=\Theta(\mp\epsilon_i), \quad (i\in I), \quad S(\tau_j)=\tau_j,\quad (j\in I_1),\notag\\
S(\partial_i)=-q\,\Theta(\epsilon_i)\partial_i, \quad (i\in I_0), \quad S(\partial_j)=-\Theta(\epsilon_j)\tau_j\partial_j, \quad (j\in I_1). \notag
\end{gather}
\end{defi}

Again we extend the definitions of $\Delta$, $\epsilon$ (resp. $S$) on $\mathfrak D_q(m|n)$ (anti-)algebraically.
Thus we obtain the following
\begin{theorem}
$(\mathfrak D_q(m|n), \Delta, \epsilon, S)$ is a pointed Hopf algebra with the above relations (3.14)--(3.25).
\end{theorem}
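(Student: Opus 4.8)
The plan is to regard $\mathfrak D_q(m|n)$ as the bosonization (Radford biproduct) of the Nichols superalgebra $\mathcal D_q(m|n)\cong A_q^{m|n}$ over the group algebra $\Bbbk G$, where $G$ is the abelian group generated by the group-like elements $\sigma_i^{\pm1},\Theta(\pm\epsilon_i)\ (i\in I)$ and $\tau_j\ (j\in I_1)$. The general theory of bosonizations already guarantees that such a biproduct is an ordinary Hopf algebra, but I would give a direct verification adapted to the explicit presentation $(3.14)$--$(3.25)$, since this also pins down the precise antipode constants and makes the pointedness transparent. First I would record that $(3.14)$--$(3.18)$ make $G$ an abelian group, that every generator of $\Bbbk G$ is group-like under $\Delta$, and that $\Delta,\epsilon,S$ restrict on $\Bbbk G$ to the standard group-algebra maps; all Hopf axioms on $\Bbbk G$ are then immediate. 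Throughout, $\mathfrak D_q(m|n)\otimes\mathfrak D_q(m|n)$ carries the \emph{ordinary} (non-super) tensor-product algebra structure, so the braided/super signs are encoded entirely through the parity elements $\tau_j$ appearing in the coproducts.

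Next I would check that $\Delta$ extends to an algebra homomorphism and $S$ to an algebra anti-homomorphism by verifying that they respect each defining relation. The relations purely among group-likes are automatic, so the substantive checks are the cross relations $(3.19)$--$(3.20)$ between the group-likes and the $\partial_i$, and the braided relations $(3.21)$--$(3.22)$ among the $\partial_i$. For $(3.19)$--$(3.20)$ I would apply $\Delta$ to both sides of, e.g., $\Theta(\epsilon_j)\partial_i\Theta(\epsilon_j)^{-1}=\theta(\epsilon_i,\epsilon_j)\partial_i$ and $\sigma_j\partial_i\sigma_j^{-1}=(-1)^{\delta_{\bar1,|\partial_i|}\delta_{ij}}q^{-\delta_{ij}}\partial_i$, using that the group-likes are grouplike and that each $\partial_i$ is skew-primitive with the prescribed coproduct from $(3.23)$; the identities reduce to the same scalar relations applied factorwise, together with the bicharacter property $\theta(\epsilon_i,\epsilon_j{+}\epsilon_k)=\theta(\epsilon_i,\epsilon_j)\theta(\epsilon_i,\epsilon_k)$.

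The hard part will be showing that $\Delta$ is compatible with the quantum commutation relation $(3.21)$ and the nilpotency $(3.22)$. Here I would expand $\Delta(\partial_i)\Delta(\partial_j)$ from the skew-primitive coproducts, move the inner group-like factors past the $\partial$'s using $(3.19)$--$(3.20)$, and collect terms in each tensor slot. The diagonal contributions reproduce $\partial_i\partial_j\otimes(\cdots)$ and $(\cdots)\otimes\partial_i\partial_j$, while the two mixed terms must combine so that $\partial_i\partial_j=\theta(\epsilon_i,\epsilon_j)\partial_j\partial_i$ is preserved; the key arithmetic input is the antisymmetry $\theta(\epsilon_i,\epsilon_j)\theta(\epsilon_j,\epsilon_i)=1$ read off from $(3.12)$. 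For $(3.22)$ with $j\in I_1$ I would compute $\Delta(\partial_j)^2$; after applying $\partial_j^2=0$, the two surviving cross terms $\partial_j\Theta(-\epsilon_j)\tau_j\otimes\partial_j$ and $\Theta(-\epsilon_j)\tau_j\partial_j\otimes\partial_j$ must cancel, and they do so \emph{precisely} because $\Theta(\pm\epsilon_j)$ commutes with $\partial_j$ (as $\theta(\epsilon_j,\epsilon_j)=1$) while $\tau_j\partial_j=-\partial_j\tau_j$ supplies the crucial minus sign. This cancellation, which encodes the super sign through the parity element $\tau_j$, is the most delicate bookkeeping and is exactly where the design of $\Delta(\partial_j)=\partial_j\otimes1+\Theta(-\epsilon_j)\tau_j\otimes\partial_j$ is essential.

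Finally I would verify coassociativity and the counit axiom (immediate on $\Bbbk G$, and a short check on the $\partial_i$ using that $\Theta(-\epsilon_i)\sigma_i$, resp. $\Theta(-\epsilon_j)\tau_j$, is grouplike), and then the antipode axiom $m(S\otimes\mathrm{id})\Delta=\eta\epsilon=m(\mathrm{id}\otimes S)\Delta$ on generators; on $\partial_i$ this confirms that the constants in $(3.25)$, namely $S(\partial_i)=-q\,\Theta(\epsilon_i)\partial_i$ for $i\in I_0$ and $S(\partial_j)=-\Theta(\epsilon_j)\tau_j\partial_j$ for $j\in I_1$, are forced by and compatible with $(3.19)$. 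For pointedness I would observe that $\mathfrak D_q(m|n)$ is graded by $\partial$-degree, with degree-zero part $\Bbbk G$, and that this is a coalgebra grading in which the coproducts of higher-degree elements involve strictly lower-degree terms; hence the coradical is contained in $\Bbbk G$, and since $\Bbbk G$ is a group algebra spanned by group-likes the coradical equals $\Bbbk G$, so $\mathfrak D_q(m|n)$ is pointed. This is just the graded-bosonization-of-a-Nichols-superalgebra principle made explicit.
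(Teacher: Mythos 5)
Your proposal is correct and follows essentially the same route as the paper: both treat $\mathfrak D_q(m|n)$ as the bosonization of the Nichols superalgebra $\mathcal D_q(m|n)$ over the group of group-likes, verify that $\Delta$ respects relations $(3.19)$--$(3.22)$ by the same expansions (in particular the cancellation $\partial_j\Theta(-\epsilon_j)\tau_j\otimes\partial_j=-\Theta(-\epsilon_j)\tau_j\partial_j\otimes\partial_j$, coming from $\theta(\epsilon_j,\epsilon_j)=1$ and $\tau_j\partial_j=-\partial_j\tau_j$, which forces $\Delta(\partial_j)^2=0$), and then check the counit and antipode axioms on the generators $\partial_i$ exactly as in the paper. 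Your closing coradical argument via the $\partial$-degree grading merely makes explicit the pointedness that the paper asserts, so it is a small refinement rather than a different approach.
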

\begin{proof}
First, by Proposition 7, we note that $\mathcal D_q(m|n)=\mathcal D_q(m|n)_{\bar0}\oplus \mathcal D_q(m|n)_{\bar1}$, where the skew-derivation generators $\partial_i$ ($i\in I_0$) are of parity $\bar0$, and $\partial_i$ ($i\in I_1$) of parity $\bar1$, and the
multiplication on $\mathcal D_q(m|n)\otimes \mathcal D_q(m|n)$ is given by
$$(\partial^\alpha\otimes \partial^\mu)(\partial^\beta\otimes \partial^\nu)=q^{\mu*\beta}\partial^\alpha\partial^\beta\otimes \partial^\mu\partial^\nu,$$
where $\partial^\alpha=\Pi_{i\in I_0}\partial_i^{\alpha_i}$ for $\alpha\in\mathbb Z_+^m$, $\partial^\mu=\Pi_{j\in I_1}\partial_j^{\mu_j}$ for $\mu\in\mathbb Z_2^n$. Clearly,
we have $\mathcal D_q(m|n)_{\bar i}\cdot\mathcal D_q(m|n)_{\bar j}\subseteq \mathcal D_q(m|n)_{\bar i+\bar j}$.
That is, the super vector subspace $\mathcal D_q(m|n)$ is a superalgebra with respect to the above multiplication,
which can be regarded as a Nichols superalgebra with generators $\partial_i \ (i\in I)$ as its all primitive elements.

In fact, we shall show that $\mathfrak D_q(m|n)$ is a bosonization of the Nichols superalgebra $\mathcal D_q(m|n)$, which is a usual Hopf algebra.
Besides elements in $\mathcal D_q(m|n)$, all other generators $\sigma_i,\ \Theta(\epsilon_i)\ (i\in I)$, $\tau_j\ (j\in I_1)$,  are group-likes. So it is clear that $\Delta$, $\epsilon$ and $S$ preserve the algebraic relations (3.14)--(3.18) of $\mathfrak D_q(m|n)$.

So it needs to check it for $\Delta$ with respect to relations (3.19)--(3.22), while it is trivial to check the relations (3.19) \& (3.20). Note that relation (3.21) for the case when $i, j\in I_0$ has been checked in Theorem 3.2 in \cite{Hu}. It remains to check it for the cases: (i) $i\in I_0, j\in I_1$; (ii) $i, j\in I_1$.
Note that
\begin{equation*}
\begin{split}
(\Theta(-\epsilon_i)\sigma_i)\partial_j&=\theta(\epsilon_i,\epsilon_j)\partial_j(\Theta(-\epsilon_i)\sigma_i), \quad (i\in I_0, j\in I_1),\\
\partial_i(\Theta(-\epsilon_j)\tau_j)&=(-1)^{\delta_{ij}}\theta(\epsilon_i,\epsilon_j)(\Theta(-\epsilon_j)\tau_j)\partial_i, \quad (i\in I, j\in I_1),\\
(\Theta(-\epsilon_i)\tau_i)\partial_j&=(-1)^{\delta_{ij}}\theta(\epsilon_i,\epsilon_j)\partial_j(\Theta(-\epsilon_i)\tau_i), \quad (i, j\in I_1).
\end{split}
\end{equation*}

Thus, (i) for $i\in I_0,\, j\in I_1$, we have
\begin{equation*}
\begin{split}
\Delta(\partial_i)\Delta(\partial_j)&=
(\partial_i\otimes\sigma_i^{-1}+\Theta(-\epsilon_i)\sigma_i\otimes\partial_i)(\partial_j\otimes 1+\Theta(-\epsilon_j)\tau_j\otimes\partial_j)\\
&=\partial_i\partial_j\otimes \sigma_i^{-1}+\Theta(-\epsilon_i{-}\epsilon_j)\sigma_i\tau_j\otimes \partial_i\partial_j\\
&\quad+(\Theta(-\epsilon_i)\sigma_i)\partial_j\otimes \partial_i+\partial_i(\Theta(-\epsilon_j)\tau_j)\otimes\sigma_i^{-1}\partial_j\\
&=\theta(\epsilon_i,\epsilon_j)\bigl(\partial_j\partial_i\otimes \sigma_i^{-1}+\Theta(-\epsilon_i{-}\epsilon_j)\tau_j\sigma_i\otimes \partial_j\partial_i\\
&\quad+\partial_j(\Theta(-\epsilon_i)\sigma_i)\otimes \partial_i+(\Theta(-\epsilon_j)\tau_j)\partial_i\otimes\partial_j\sigma_i^{-1}\bigr)\\
&=\theta(\epsilon_i,\epsilon_j)\Delta(\partial_j)\Delta(\partial_i);
\end{split}
\end{equation*}

(ii) for $i,\,j\in I_1$ but $i\ne j$, we have
\begin{equation*}
\begin{split}
\Delta(\partial_i)\Delta(\partial_j)&=
(\partial_i\otimes 1+\Theta(-\epsilon_i)\tau_i\otimes\partial_i)(\partial_j\otimes 1+\Theta(-\epsilon_j)\tau_j\otimes\partial_j)\\
&=\partial_i\partial_j\otimes 1+\Theta(-\epsilon_i{-}\epsilon_j)\tau_i\tau_j\otimes \partial_i\partial_j\\
&\quad+(\Theta(-\epsilon_i)\tau_i)\partial_j\otimes \partial_i+\partial_i(\Theta(-\epsilon_j)\tau_j)\otimes\partial_j\\
&=\theta(\epsilon_i,\epsilon_j)\bigl(\partial_j\partial_i\otimes 1+\Theta(-\epsilon_i{-}\epsilon_j)\tau_j\tau_i\otimes \partial_j\partial_i\\
&\quad+\partial_j(\Theta(-\epsilon_i)\tau_i)\otimes \partial_i+(\Theta(-\epsilon_j)\tau_j)\partial_i\otimes\partial_j\bigr)
\\
&=\theta(\epsilon_i,\epsilon_j)\Delta(\partial_j)\Delta(\partial_i).
\end{split}
\end{equation*}

For $j\in I_1$: it suffices to observe that $(\partial_j\otimes 1)(\Theta(-\epsilon_j)\tau_j\otimes \partial_j)=-(\Theta(-\epsilon_j)\tau_j\otimes \partial_j)(\partial_j\otimes 1)$,
which results in $\Delta(\partial_j)^2=0$.

In view of the fact just proved, together with (3.23) \& (3.24), we see that $(1\otimes \Delta)\Delta=(\Delta\otimes 1)\Delta$ and $(1\otimes \epsilon)\Delta=1=(\epsilon\otimes 1)\Delta$ hold. Again by (3.23) \& (3.25), we have
\begin{equation*}
\begin{split}
m\circ (1\otimes S)\circ \Delta(\partial_i)&=\partial_i\sigma_i+\Theta(-\epsilon_i)\sigma_i(-q\Theta(\epsilon_i)\partial_i)
=\partial_i\sigma_i-q\sigma_i\partial_i=0, \quad (i\in I_0)\\
m\circ (S\otimes 1)\circ \Delta(\partial_i)&=-q\Theta(\epsilon_i)\partial_i\sigma_i^{-1}+\sigma_i^{-1}\Theta(\epsilon_i)\partial_i
=0, \quad (i\in I_0)\\
m\circ (1\otimes S)\circ \Delta(\partial_j)&=\partial_j+\Theta(-\epsilon_j)\tau_j(-\Theta(\epsilon_j)\tau_j\partial_j)=0, \quad (j\in I_1)\\
m\circ (S\otimes 1)\circ \Delta(\partial_j)&=-\Theta(\epsilon_j)\tau_j\partial_j+\Theta(\epsilon_j)\tau_j\partial_j=0, \quad (j\in I_1)
\end{split}
\end{equation*}
thus we get $m\circ(S\otimes 1)\circ\Delta(\partial_i)=\epsilon(\partial_i)=m\circ(1\otimes S)\circ\Delta(\partial_i)$. On the other hand, owing to (3.23) \& (3.14)--(3.16), there holds $m\circ(S\otimes 1)\circ\Delta=\eta\circ\epsilon=m\circ(1\otimes S)\circ\Delta$, where $(\mathfrak D_q(m|n), m, \eta)$ is the algebra structure of $\mathfrak D_q(m|n)$.

Therefore, $(\mathfrak D_q(m|n), m, \eta, \Delta, \epsilon, S)$ is a (pointed) Hopf algebra.
\end{proof}

In the case when $\textbf{char}(q)=\ell$ $(\geq 3)$, if consider the quantum differential operators defined on the quantum restricted Grassmann superalgebra $\Omega_q(m|n,\bold 1)$, we see that $\partial_i^\ell\equiv 0$, for all $i\in I_0$.
Let us denote by $\mathfrak D_q(m|n,\bold 1)$ the {\it quantum restricted differential Hopf algebra}, which is defined to be the quotient of $\mathfrak D_q(m|n)$ by the Hopf ideal $\mathcal I$ generated by $\sigma_i^\ell-1$ (resp. $\sigma_i^{2\ell}-1$), $\Theta(\epsilon_i)^\ell-1$ (resp. $\Theta(\epsilon_i)^{2\ell}-1$) ($i\in I$), and $\partial_i^\ell$ ($i\in I_0$), when $\ell$ is odd (resp. even).
\begin{coro}
Assume that $\textbf{char}(q)=\ell\, (\geq 3)$ is odd (resp. even), $\mathfrak D_q(m|n,\bold 1)$ is a finite-dimensional pointed Hopf algebra associated with the bicharacter $\theta$ defined over $\mathbb Z_\ell^m\times \mathbb Z_2^n$ (resp. $\mathbb Z_{2\ell}^m\times \mathbb Z_2^n$).
\end{coro}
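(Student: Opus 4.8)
The plan is to read off $\mathfrak D_q(m|n,\bold 1)$ as the quotient Hopf algebra $\mathfrak D_q(m|n)/\mathcal I$ and to lean on Theorem 11, which already furnishes $\mathfrak D_q(m|n)$ as a pointed Hopf algebra. Thus three things must be established: (a) $\mathcal I$ is a Hopf ideal, so the quotient is again a Hopf algebra; (b) the quotient is pointed; and (c) it is finite-dimensional, with the group of group-likes carrying the stated bicharacter. Throughout I would follow the restricted-case argument of \cite{Hu} (Theorem 3.2 and its corollary), the one genuinely new feature being the odd generators $\partial_j$ $(j\in I_1)$.

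The crux is step (a), and inside it the generators $\partial_i^{\ell}$ $(i\in I_0)$. For the group-like generators $\sigma_i^{\ell}-1$ and $\Theta(\epsilon_i)^{\ell}-1$ (resp.\ their $2\ell$-th powers) one has immediately $\Delta(g-1)=(g-1)\otimes g+1\otimes(g-1)$, $\epsilon(g-1)=0$ and $S(g-1)=-g^{-1}(g-1)$, so these satisfy the Hopf-ideal conditions. For $\partial_i^{\ell}$ I would expand $\Delta(\partial_i^{\ell})=\Delta(\partial_i)^{\ell}$ by the quantum binomial theorem applied to $a=\partial_i\otimes\sigma_i^{-1}$ and $b=\Theta(-\epsilon_i)\sigma_i\otimes\partial_i$. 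Using relations (3.18)--(3.21) together with $\theta(\epsilon_i,\epsilon_i)=1$ for $i\in I_0$ (so that $\Theta(\epsilon_i)$ commutes with $\partial_i$ and $\sigma_i^{-1}\partial_i=q\,\partial_i\sigma_i^{-1}$), one checks $ab=q^{2}\,ba$, whence $\Delta(\partial_i)^{\ell}=\sum_{k=0}^{\ell}c_k\,a^{k}b^{\ell-k}$ with each $c_k$ equal, up to a power of $q$, to the symmetric $q$-binomial ${\ell \brack k}$. By Lemma 1(i), writing $\ell=0+1\cdot\ell$ and $k=k+0\cdot\ell$ for $0<k<\ell$ gives ${\ell \brack k}={0 \brack k}\binom{1}{0}=0$, the sign in the even case being irrelevant since ${0 \brack k}=0$. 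Hence only the extreme terms survive and $\Delta(\partial_i^{\ell})=\partial_i^{\ell}\otimes\sigma_i^{-\ell}+\Theta(-\ell\epsilon_i)\sigma_i^{\ell}\otimes\partial_i^{\ell}$, which lies in $\mathcal I\otimes\mathfrak D_q(m|n)+\mathfrak D_q(m|n)\otimes\mathcal I$ because $\partial_i^{\ell}\in\mathcal I$; together with $\epsilon(\partial_i^{\ell})=0$ and $S(\partial_i^{\ell})=(-q)^{\ell}\Theta(\ell\epsilon_i)\partial_i^{\ell}\in\mathcal I$ from the antipode in (3.25), this shows $\mathcal I$ is a Hopf ideal. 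Note that no odd relation need be added, since $\partial_j^{2}=0$ $(j\in I_1)$ already holds in $\mathfrak D_q(m|n)$ by (3.22); and that the argument is uniform in the parity of $\ell$, so $\partial_i^{\ell}$ (rather than $\partial_i^{2\ell}$) is the correct generator in both cases.

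For (b), pointedness is automatic: a coalgebra quotient of a pointed coalgebra is pointed, and $\mathfrak D_q(m|n)$ is pointed by Theorem 11. For (c) I would identify $\mathfrak D_q(m|n,\bold 1)$ with the bosonization of the \emph{restricted} Nichols superalgebra obtained from $\mathcal D_q(m|n)$ by imposing $\partial_i^{\ell}=0$ $(i\in I_0)$ and $\partial_j^{2}=0$ $(j\in I_1)$, over the group $G$ of group-likes. The restricted Nichols part has monomial basis $\{\partial^{\beta}\partial^{\mu}\mid \beta\le\tau(m),\ \mu\in\mathbb Z_2^n\}$ of dimension $\ell^{m}2^{n}$, mirroring the basis of $\Omega_q(m|n,\bold 1)$, while $G$ is a finitely generated abelian group all of whose generators are now torsion (orders $\ell$ or $2\ell$ for the $\sigma_i,\Theta(\epsilon_i)$ and $2$ for the $\tau_j$), hence finite. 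A PBW/triangular-basis count then yields $\dim\mathfrak D_q(m|n,\bold 1)=|G|\cdot\ell^{m}2^{n}<\infty$; the linear independence of this spanning set, i.e.\ that $\mathcal I$ is no larger than intended, I would secure either from the faithful action on the finite-dimensional module $\Omega_q(m|n,\bold 1)$ (Lemma 9) or by a Bergman diamond-lemma verification against (3.14)--(3.25).

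Finally, the odd/even dichotomy merely records the order of $q$: because $\sigma_i$ and $\Theta(\epsilon_i)$ $(i\in I_0)$ act through powers of $q$, the truncation making them finite is $\sigma_i^{\ell}=1$ when $q$ is a primitive $\ell$-th root $(\ell$ odd$)$ and $\sigma_i^{2\ell}=1$ when $q$ is a primitive $2\ell$-th root $(\ell$ even$)$; using (3.18) to eliminate redundant group-likes then presents $G$ with the exponent datum $\mathbb Z_\ell^m\times\mathbb Z_2^n$ (resp.\ $\mathbb Z_{2\ell}^m\times\mathbb Z_2^n$), the $\mathbb Z_2^n$-factor being carried by the order-two $\tau_j$. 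I expect the main obstacle to be precisely the quantum-binomial step of the second paragraph: keeping track of the $(-q)$-signs attached to the odd indices and of the split $q^{\ell}=\pm1$ so that the middle coefficients provably vanish, and then confirming, against this, that the PBW spanning set remains independent after passing to the quotient, so that the dimension is exactly $|G|\cdot\ell^{m}2^{n}$.
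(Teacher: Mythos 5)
Your proposal is correct and takes essentially the same approach as the paper: the paper's entire proof is the single observation that $(\partial_i\otimes \sigma_i^{-1})(\Theta(-\epsilon_i)\sigma_i\otimes \partial_i)=q^2(\Theta(-\epsilon_i)\sigma_i\otimes \partial_i)(\partial_i\otimes \sigma_i^{-1})$, forcing the middle $q$-binomial coefficients of $\Delta(\partial_i)^\ell$ to vanish so that $\mathcal I$ is a Hopf ideal, which is exactly your key step. The details you add (Hopf-ideal conditions for the group-like generators and the antipode, pointedness of coalgebra quotients, finiteness of the group of group-likes) are left implicit in the paper, and your treatment of the even case, where $\sigma_i^{-\ell}\neq 1$ so $\Delta(\partial_i^\ell)$ is only skew-primitive modulo $\mathcal I$ rather than literally $\partial_i^\ell\otimes 1+1\otimes\partial_i^\ell$, is in fact more precise than the paper's stated identity.
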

\begin{proof}
It suffices to notice that $(\partial_i\otimes \sigma_i^{-1})(\Theta(-\epsilon_i)\sigma_i\otimes \partial_i)=q^2(\Theta(-\epsilon_i)\sigma_i\otimes \partial_i)(\partial_i\otimes \sigma_i^{-1})$.
This implies that $\Delta(\partial_i)^\ell=\partial_i^\ell\otimes 1+1\otimes \partial_i^\ell$.
\end{proof}

\begin{remark} $(\text{\rm i})$ Actually, we can equip $\mathfrak D_q(m|n)$ with another Hopf algebra structure $(\mathfrak D_q(m|n), \Delta^{(-)}, \epsilon, S^{(-)})$, where the only differences are the actions of $\Delta^{(-)}$, $S^{(-)}$ on $\partial_i$ $(i\in I)$ given respectively by
\begin{gather*}
\Delta^{(-)}(\partial_i)=\partial_i{\otimes} \sigma_i+\Theta(-\epsilon_i)\sigma_i^{-1}{\otimes}\partial_i, \ (i\in I_0);\quad
\Delta^{(-)}(\partial_j)=\partial_j{\otimes} 1+\Theta(-\epsilon_j)\tau_j{\otimes}\partial_j, \ (j\in I_1), \\
S^{(-)}(\partial_i)=-q^{-1}\Theta(\epsilon_i)\partial_i,\ (i\in I_0); \quad S^{(-)}(\partial_j)=-\Theta(\epsilon_j)\tau_j\partial_j, \ (j\in I_1).
\end{gather*}

$(\text{\rm ii})$ The results here provide a method of constructing some (new) Hopf algebras arising from an investigation of introducing suitable quantum differential operators acting on some given quantum (super) vector spaces in advance. This is a continuation of the spirit of \cite{Hu}, which can be also
regarded as addressing the same questions posed earlier by Manin in his notes \cite{Ma1}.
Another marked point I would like to mention is that the Radford-Majid's bosonization of a superalgebra doesn't not necessarily follow the way of smash-extension only via one copy of $\Bbbk [\mathbb Z_2]$ $($cf. \cite{AAH}, \cite{Ma}$)$, our Theorem 11 and Corollary 12 are interesting examples via multi-copies $\Bbbk [\mathbb Z_2^n]$.  The next discussion shows once more that such bosonizations of sharing the same Nichols superalgebra are not unique. This seems a significant task to consider how to seek explicit minimal bosonizations of those Nichols superalgebras appeared in \cite{H}.
\end{remark}

\subsection{Bosonization of quantum affine $(m|n)$-superspace} By Proposition 7, $A_q^{m|n}\cong \mathcal D_q(m|n)$ as superalgebras, we can augment $A_q^{m|n}$ through adding a certain multiplication abelian group $\Gamma$ (as group-likes)
and construct another Hopf algebra $\mathfrak A_q(m|n)$ as a bosonization of $A_q^{m|n}$ such that it contains the quantum
affine $(m|n)$-superspace as its sub-superalgebra.
\begin{defi} Assume that $\mathfrak A_q(m|n)$ is an associative $\Bbbk$-algebra generated by the $A_q^{m|n}$, together with
the group-like elements $\mathcal K(\epsilon_i)$, where $\epsilon_i\in \Gamma=\mathbb Z^m\times\mathbb Z_2^n$, associated with
the bicharacter $\theta$ defined in $(3.10)$, satisfying the following relations:
\begin{gather}
\mathcal K(\epsilon_i)\mathcal K(\epsilon_j)=\mathcal K(\epsilon_i{+}\epsilon_j)=\mathcal K(\epsilon_j)\mathcal K(\epsilon_i),
\quad \mathcal K(\epsilon_i)^{\pm1}\mathcal K(\epsilon_i)^{\mp1}=\mathcal K(0)=1,\notag\\
\mathcal K(\epsilon_j)^2=1, \quad (j\in I_1),\notag\\
\mathcal K(\epsilon_i)^\ell=\mathcal K(\ell\epsilon_i)=1, \quad (\textit{for} \ i\in I_0, \textit{when} \ \textbf{ord}\,(q)=\ell),\notag\\
\mathcal K(\epsilon_i)x_j\mathcal K(\epsilon_i)^{-1}=q^{\delta_{ij}}\theta(\epsilon_i,\epsilon_j)\,x_j,\quad (j\in I_0),\\
\mathcal K(\epsilon_i)x_j\mathcal K(\epsilon_i)^{-1}=(-1)^{\delta_{ij}}\theta(\epsilon_i,\epsilon_j)\,x_j,\quad (j\in I_1),\notag\\
x_ix_j=\theta(\epsilon_i,\epsilon_j)\,x_jx_i, \quad (i, j\in I),\notag\\
x_j^2=0, \quad (j\in I_1).\notag
\end{gather}

Moreover, $\mathfrak A_q(m|n)$ can be equipped with the following mappings: $\Delta$, $\epsilon$,
$S$ on the generators of $\mathfrak A_q(m|n)$ as
\begin{gather}
\Delta: \mathfrak A_q(m|n)\longrightarrow
\mathfrak A_q(m|n)\otimes \mathfrak A_q(m|n)\\
\Delta(\mathcal K(\epsilon_i)^{\pm 1})=\mathcal K(\epsilon_i)^{\pm1}\otimes \mathcal K(\epsilon_i)^{\pm1}, \quad (i\in I),\notag\\
\Delta(x_i)=x_i\otimes 1+\mathcal K(\epsilon_i)\otimes x_i, \quad (i\in I)\notag\\
\epsilon: \mathfrak A_q(m|n)\longrightarrow \Bbbk\\
\epsilon(\mathcal K(\epsilon_i)^{\pm1})=1,\notag\\
\epsilon(x_i)=0,\notag\\
S: \mathfrak A_q(m|n)\longrightarrow \mathfrak A_q(m|n)\\
S(\mathcal K(\epsilon_i)^{\pm1})=\mathcal K(\epsilon_i)^{\mp1},\notag\\
S(x_i)=-\mathcal K(-\epsilon_i)x_i. \notag
\end{gather}
\end{defi}

We extend the definitions of $\Delta$, $\epsilon$ (resp. $S$) on $\mathfrak A_q(m|n)$ (anti-)algebraically.
The following quantum object in the case when $n=0$ is exactly Theorem 5.1 in \cite{Hu1}, namely, the structure of the quantized universal enveloping algebra of the abelian Lie algebra of dimension $m$.
\begin{theorem}
$\mathfrak A_q(m|n)$ is a pointed Hopf algebra with the group algebra $\Bbbk[\mathbb Z^m\times\mathbb Z_2^n]$ as its coradical,
associated with the bicharacter $\theta$ defined on $\mathbb Z^m\times\mathbb Z_2^n$ $(see\ (3.10))$,
which contains the quantum affine $(m|n)$-superspace $A_q^{m|n}$ as its Nichols superalgebra.
\end{theorem}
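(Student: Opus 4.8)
The plan is to recognize $\mathfrak A_q(m|n)$ as the Radford biproduct (bosonization) of the braided Hopf superalgebra $A_q^{m|n}$ by the group algebra $\Bbbk[\Gamma]$, $\Gamma=\mathbb Z^m\times\mathbb Z_2^n$, and to run the verification in complete parallel to (but more simply than) the proof of Theorem 11. Here the braided vector space is $\mathbf V=\bigoplus_{i\in I}\Bbbk x_i$, regarded as an object of ${}^{\Bbbk[\Gamma]}_{\Bbbk[\Gamma]}\mathcal{YD}$ with grading $|x_i|=\epsilon_i$ and $\Gamma$-action read off from $(3.26)$, so that its diagonal braiding is $c(x_i\otimes x_j)=\theta(\epsilon_i,\epsilon_j)\,x_j\otimes x_i$ for $i\neq j$ and $c(x_i\otimes x_i)=b_{ii}\,x_i\otimes x_i$ with $b_{ii}=q$ $(i\in I_0)$ or $b_{ii}=-1$ $(i\in I_1)$. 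The single structural fact that drives everything is that $\theta$ is skew, i.e. $\theta(\epsilon_i,\epsilon_j)\theta(\epsilon_j,\epsilon_i)=1$ for $i\neq j$, which is immediate from $(3.10)$ since $\theta_+$ and $\theta_-$ are antisymmetric bicharacters.

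First I would check that $\Delta,\epsilon$ extend to algebra homomorphisms and $S$ to an algebra anti-homomorphism, i.e. that they respect the relations $(3.26)$. Since every $\mathcal K(\epsilon_i)$ is group-like, all relations among the $\mathcal K$'s and all $\mathcal K$--$x$ commutation relations are preserved automatically, and the two essential checks are the braided commutation $x_ix_j=\theta(\epsilon_i,\epsilon_j)x_jx_i$ and the nilpotency $x_j^2=0$ $(j\in I_1)$. Expanding $\Delta(x_i)\Delta(x_j)$ in the ordinary tensor product and reordering the two cross terms via $\mathcal K(\epsilon_i)x_j=\theta(\epsilon_i,\epsilon_j)x_j\mathcal K(\epsilon_i)$ and $x_i\mathcal K(\epsilon_j)=\theta(\epsilon_j,\epsilon_i)^{-1}\mathcal K(\epsilon_j)x_i=\theta(\epsilon_i,\epsilon_j)\mathcal K(\epsilon_j)x_i$, the skewness of $\theta$ makes $\Delta(x_i)\Delta(x_j)=\theta(\epsilon_i,\epsilon_j)\Delta(x_j)\Delta(x_i)$ fall out; for $x_j^2=0$ the cross term $x_j\mathcal K(\epsilon_j)\otimes x_j+\mathcal K(\epsilon_j)x_j\otimes x_j$ vanishes precisely because $\mathcal K(\epsilon_j)x_j=-x_j\mathcal K(\epsilon_j)$, so the group-like absorbs the parity sign. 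Coassociativity and the counit axiom then hold on group-likes trivially and on each skew-primitive $x_i$ by inspection, while the antipode axioms reduce to $m\circ(S\otimes\mathrm{id})\circ\Delta(x_i)=-\mathcal K(-\epsilon_i)x_i+\mathcal K(-\epsilon_i)x_i=0=\epsilon(x_i)$ together with the analogous left version. This makes $\mathfrak A_q(m|n)$ a Hopf algebra.

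Next I would establish pointedness and identify the coradical. The subalgebra generated by the $\mathcal K(\epsilon_i)^{\pm1}$ is the commutative cocommutative Hopf algebra $\Bbbk[\Gamma]$, all of whose nonzero elements are group-like, and every generator $x_i$ is $(\mathcal K(\epsilon_i),1)$-skew-primitive; since $\mathfrak A_q(m|n)$ is generated as an algebra by group-likes and skew-primitives it is pointed. To pin down the coradical as exactly $\Bbbk[\Gamma]$ I would invoke the biproduct structure: by the theory of bosonization $\mathfrak A_q(m|n)\cong\mathfrak B(\mathbf V)\#\Bbbk[\Gamma]$ once we know $A_q^{m|n}=\mathfrak B(\mathbf V)$, and a Radford biproduct of a connected graded braided Hopf algebra (the Nichols algebra has $\mathfrak B(\mathbf V)_0=\Bbbk$) with a group algebra has coradical equal to that group algebra.

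The remaining, and genuinely nontrivial, step is to identify $A_q^{m|n}$ with the Nichols superalgebra $\mathfrak B(\mathbf V)$; this is the main obstacle. One inclusion is easy: each defining relation in $(3.1)$ is a $c$-eigenvector annihilated by the degree-$2$ quantum symmetrizer (for $i\in I_1$ this uses $b_{ii}=-1$, and for $i\neq j$ it uses $b_{ij}b_{ji}=\theta(\epsilon_i,\epsilon_j)\theta(\epsilon_j,\epsilon_i)=1$), so these relations lie in the Nichols ideal and we get a surjection $A_q^{m|n}\twoheadrightarrow\mathfrak B(\mathbf V)$. The content is to show it is injective, i.e. that the quadratic relations already generate all relations of $\mathfrak B(\mathbf V)$. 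Because every off-diagonal product satisfies $b_{ij}b_{ji}=1$, the rank-one blocks decouple and $\mathfrak B(\mathbf V)$ is the braided tensor product of the rank-one Nichols algebras $\mathfrak B(\Bbbk x_i)$, which is $\Bbbk[x_i]$ for $i\in I_0$ (as $q$ is generic, so $b_{ii}=q$ is not a root of unity) and $\Bbbk[x_i]/(x_i^2)$ for $i\in I_1$; the resulting Hilbert series and PBW monomial basis then coincide with the monomial basis $\{v^{\langle\alpha,\mu\rangle}\}$ of $A_q^{m|n}$ exhibited in subsection~3.2. Matching these bases forces the surjection to be an isomorphism, whence $\mathfrak A_q(m|n)$ contains $A_q^{m|n}=\mathfrak B(\mathbf V)$ as its Nichols superalgebra, completing the proof.
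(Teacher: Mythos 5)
Your proposal is correct for generic $q$, and it takes a genuinely different --- and substantially more rigorous --- route than the paper. The paper's entire proof of this theorem is the single sentence ``by analogy of argument of Theorem 11'', i.e.\ the direct verification that $\Delta$, $\epsilon$, $S$ respect the relations $(3.26)$ and satisfy the Hopf axioms; that is exactly your first paragraph, and there the two arguments coincide. But the paper never actually proves the coradical claim, and in Theorem 11 (hence here) it uses ``Nichols superalgebra'' loosely, to mean a braided superalgebra generated by primitive elements. You instead treat $\mathfrak A_q(m|n)$ as a Radford biproduct, get pointedness from generation by group-likes and skew-primitives, get the coradical from the connected grading of the biproduct, and prove the literal Nichols property via the quantum-linear-space mechanism: since $\theta(\epsilon_i,\epsilon_j)\theta(\epsilon_j,\epsilon_i)=1$ for $i\neq j$, the Nichols algebra of $\mathbf V$ is the braided tensor product of its rank-one blocks (exactly the situation of \cite{AS}), and comparing Hilbert series with the monomial basis of $A_q^{m|n}$ upgrades the canonical surjection to an isomorphism. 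What your approach buys is an actual proof of the two structural claims the paper only asserts; what the paper's approach buys is uniformity in $q$.

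That last point is the one caveat you must address. Definition 14 explicitly allows $\textbf{ord}(q)=\ell$ (it then imposes $\mathcal K(\epsilon_i)^\ell=1$), and your identification $A_q^{m|n}\cong\mathfrak B(\mathbf V)$ fails in that case: for $i\in I_0$ the diagonal braiding $b_{ii}=q$ is a root of unity, so $\mathfrak B(\Bbbk x_i)=\Bbbk[x_i]/(x_i^\ell)$, and your surjection $A_q^{m|n}\twoheadrightarrow\mathfrak B(\mathbf V)$ acquires the kernel generated by the $x_i^\ell$ $(i\in I_0)$; $A_q^{m|n}$ is then only a pre-Nichols algebra. This is consistent with the paper's own Remark 16, which shows that at $\textbf{ord}(q)=\ell$ the elements $x_i^\ell$ are primitive, whereas a genuine Nichols algebra has no primitives above degree one. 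So your parenthetical ``as $q$ is generic'' is doing real work and should be promoted to an explicit hypothesis: your argument proves the theorem as literally stated exactly when $q$ is not a root of unity, while at roots of unity the Nichols claim can only hold in the paper's looser sense. Note also that your Hopf-algebra, pointedness and coradical arguments do \emph{not} require the Nichols identification --- the coradical argument only needs $A_q^{m|n}$ to be a connected graded braided Hopf algebra, which your first-paragraph computations already establish --- so those parts of your proof remain valid for all $q$; only the final identification needs genericity.
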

\begin{proof}
By analogy of argument of Theorem 11, one can prove that $\mathfrak A_q(m|n)$ is a Hopf algebra as desired.
\end{proof}

\begin{remark}
In Definition 14, when $\textbf{ord}\,(q)=\ell\, (\ge 3)$, i.e., $q^\ell=1$, we assumed $\mathcal K(\epsilon_i)^\ell=1$ for $i\in I_0$. This
defining condition is reasonable, which is based on the interpretation below. Noting that
the quasi-commutative relation: $(\mathcal K(\epsilon_i)\otimes x_i)(x_i\otimes 1)=q(x_i\otimes 1)(\mathcal K(\epsilon_i)\otimes x_i)$, we have
\begin{equation}
\Delta(x_i^p)=(\Delta(x_i))^p=\sum_{r=0}^p\binom{p}{r}_q x_i^{p-r}\mathcal K(\epsilon_i)^r\otimes x_i^r,
\end{equation}
where $\binom{p}{r}_q=\frac{(p)_q!}{(p-r)_q!(r)_q!}$, $(r)_q!=(r)_q(r-1)_q\cdots (1)_q$, and $(r)_q=\frac{q^r-1}{q-1}$. This, together with the assumption condition above, implies $\Delta(x_i^\ell)=x_i^\ell\otimes 1+1\otimes x_i^\ell$ for $i\in I_0$ when $\textbf{ord}\,(q)=\ell$. Clearly, $x_i^\ell\ (i\in I_0)$ are central in $\mathfrak A_q(m|n)$.
On the other hand, observing $q^{\ell(\ell+1)}=q^r=-1$ when $\ell=2r$, we have
\begin{equation*}
\begin{split}
S(x_i^\ell)&=(-1)^\ell\bigl(\mathcal K(\epsilon_i)^{-1}x_i\bigr)^\ell=(-1)^\ell q^{\frac{\ell(\ell+1)}2}\mathcal K(\epsilon_i)^{-\ell}x_i^\ell\\
&=\begin{cases} -x_i^\ell, & \ell\ \textit{odd},\\
q^{r(\ell+1)}x_i^\ell=-x_i^\ell, & \ell\ \textit{even}.
\end{cases}
\end{split}
\end{equation*}
So, this means that $\mathfrak A_q(m|n)$ contains the polynomial $($Hopf$)$ algebra $\Bbbk [x_1^\ell,\cdots,x_m^\ell]$ $($with the usual Hopf algebra structure$)$ as its central $($Hopf$)$ subalgebra inside.
\end{remark}

\subsection{Multi-rank Taft (Hopf) algebra of $(m|n)$-type}
In the case when $\textbf{ord}\,(q)=\ell$ $(>0)$, we have already gotten the construction/notion of the so-called $m$-rank Taft (Hopf) algebra implicitly in the case when $n=0$ (see Section 5 in \cite{Hu1}), namely, the terminology here ``$(m|0)$-type".
Denote $\mathcal {TH}_q(m|n):=\mathfrak A_q(m|n,\bold 1)$ by the quotient of
$\mathfrak A_q(m|n)$ by the Hopf ideal $\mathcal I_0$ generated by the central elements $x_i^\ell$ ($i\in I_0$), namely,
\begin{defi}
 Assume that $\textbf{ord}\,(q)=\ell$ $(>2)$, and $\mathcal {TH}_q(m|n)$ is an associative $\Bbbk$-algebra, which we call the multi-rank Taft algebra of $(m|n)$-type, generated by $x_i$ $(i\in I)$, together with
the group-like elements $\mathcal K(\epsilon_i)$, where $\epsilon_i\in \Gamma=\mathbb Z_\ell^m\times\mathbb Z_2^n$, associated with the bicharacter $\theta$ defined in $(3.10)$, satisfying the following relations:
\begin{gather*}
\mathcal K(\epsilon_i)\mathcal K(\epsilon_j)=\mathcal K(\epsilon_i{+}\epsilon_j)=\mathcal K(\epsilon_j)\mathcal K(\epsilon_i),
\qquad \mathcal K(\epsilon_i)^{\pm1}\mathcal K(\epsilon_i)^{\mp1}=\mathcal K(0)=1,\\
\mathcal K(\epsilon_i)^\ell=\mathcal K(\ell\epsilon_i)=1, \quad (i\in I_0),\qquad \mathcal K(\epsilon_j)^2=1, \quad (j\in I_1),\\
\mathcal K(\epsilon_i)x_j\mathcal K(\epsilon_i)^{-1}=q^{\delta_{ij}}\theta(\epsilon_i,\epsilon_j)\,x_j,\quad (j\in I_0),\\
\mathcal K(\epsilon_i)x_j\mathcal K(\epsilon_i)^{-1}=(-1)^{\delta_{ij}}\theta(\epsilon_i,\epsilon_j)\,x_j,\quad (j\in I_1),\\
x_ix_j=\theta(\epsilon_i,\epsilon_j)\,x_jx_i, \quad (i, j\in I),\\
x_i^\ell=0, \quad (i\in I_0), \qquad x_j^2=0, \quad (j\in I_1).
\end{gather*}
\end{defi}

\begin{coro}
Assume that $\textbf{ord}\,(q)=\ell$ $(>2)$, then $(\mathcal {TH}_q(m|n), \Delta, \epsilon, S)$ with formulae $(3.27)$--$(3.29)$ forms an $(\ell^m\cdot 2^n)^2$-dimensional pointed Hopf algebra with
$\Bbbk [\mathbb Z_\ell^m\times\mathbb Z_2^n]$ as its coradical. In particular, $\mathcal {TH}_q(1|0)$ is the usual Taft algebra, and $\mathcal {TH}_q(m|0)$ is the $m$-rank Taft algebra in \cite{Hu1}.
\end{coro}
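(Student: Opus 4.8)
The plan is to obtain $\mathcal{TH}_q(m|n)$ as the finite-dimensional restricted analogue of the bosonization $\mathfrak A_q(m|n)$, in direct parallel with how Corollary 12 is deduced from Theorem 11. First I would check that the ideal $\mathcal I_0$ generated by the central elements $x_i^\ell$ $(i\in I_0)$ is a Hopf ideal. By the computation recorded in Remark 16, the quasi-commutation relation $(\mathcal K(\epsilon_i)\otimes x_i)(x_i\otimes 1)=q\,(x_i\otimes 1)(\mathcal K(\epsilon_i)\otimes x_i)$ and the $q$-binomial expansion of $\Delta(x_i^p)$ force
\[
\Delta(x_i^\ell)=x_i^\ell\otimes 1+1\otimes x_i^\ell,\qquad \epsilon(x_i^\ell)=0,\qquad S(x_i^\ell)=-x_i^\ell,
\]
so $\mathcal I_0$ is stable under $\Delta$, $\epsilon$ and $S$, and the structure maps (3.27)--(3.29) descend to the quotient. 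Hence $\mathcal{TH}_q(m|n)$ is a Hopf algebra, inheriting its structure from $\mathfrak A_q(m|n)$ via Theorem 15.

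Next I would pin down the dimension. Using the quasi-commutative relations $x_ix_j=\theta(\epsilon_i,\epsilon_j)\,x_jx_i$ together with $\mathcal K(\epsilon_i)x_j\mathcal K(\epsilon_i)^{-1}\in\Bbbk^\times x_j$, every element can be put into the normal form $\mathcal K(\lambda)\,x^{\langle\alpha,\mu\rangle}$ with $\lambda\in\mathbb Z_\ell^m\times\mathbb Z_2^n$, where $0\le\alpha_i\le\ell-1$ for $i\in I_0$ (from $x_i^\ell=0$) and $\mu\in\mathbb Z_2^n$ (from $x_j^2=0$). This yields the spanning bound $\dim\mathcal{TH}_q(m|n)\le(\ell^m2^n)^2$. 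For the matching lower bound I would invoke Proposition 7: the $x$-subalgebra is precisely the finite-dimensional restricted quantum affine $(m|n)$-superspace, i.e.\ the Nichols superalgebra already underlying $\mathfrak D_q(m|n,\mathbf 1)$ in Corollary 12, of dimension $\ell^m2^n$; and $\mathcal{TH}_q(m|n)$ is its Radford biproduct with the group algebra $\Bbbk[G]$, $G=\mathbb Z_\ell^m\times\mathbb Z_2^n$. Since such a biproduct has underlying space $B\otimes\Bbbk[G]$ and the monomials $x^{\langle\alpha,\mu\rangle}$ form a basis of $B$, the normal-form monomials are linearly independent and $\dim\mathcal{TH}_q(m|n)=(\ell^m2^n)^2$.

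For pointedness and the coradical I would exploit that $\mathcal{TH}_q(m|n)$ is a bosonization of a connected $\mathbb Z_+$-graded Nichols superalgebra over $\Bbbk[G]$. Grading by total $x$-degree is a coalgebra grading whose degree-zero component is $\Bbbk[G]$, so the associated filtration refines the coradical filtration; as $\Bbbk[G]$ is cosemisimple and spanned by the group-likes $\mathcal K(\epsilon_i)$, one concludes $\mathcal{TH}_q(m|n)_0=\Bbbk[G]$, whence $\mathcal{TH}_q(m|n)$ is pointed with coradical $\Bbbk[\mathbb Z_\ell^m\times\mathbb Z_2^n]$ and each $x_i$ is a $(\mathcal K(\epsilon_i),1)$-skew primitive. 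The special cases follow immediately: for $(m|n)=(1|0)$ there is one group-like $\mathcal K(\epsilon_1)$ of order $\ell$ and one skew primitive $x_1$ with $x_1^\ell=0$, which is the usual Taft algebra, while the purely even case $(m|0)$ recovers the $m$-rank Taft algebra of \cite{Hu1}. The main obstacle I anticipate is the linear independence behind the exact dimension: one must ensure that imposing $x_i^\ell=0$ does not collapse the algebra beyond the expected size, that is, that the restricted quantum affine superspace genuinely has dimension $\ell^m2^n$. This is exactly where the Nichols-superalgebra interpretation of $A_q^{m|n}$ furnished by Proposition 7 carries the weight, guaranteeing a clean PBW basis and thus the asserted dimension of the bosonization.
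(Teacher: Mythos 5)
Your proposal is correct and takes essentially the same route as the paper: Corollary 18 is justified there precisely by Definition 17 (the quotient of $\mathfrak A_q(m|n)$ by the Hopf ideal $\mathcal I_0$ generated by the central elements $x_i^\ell$), the computation in Remark 16 showing $\Delta(x_i^\ell)=x_i^\ell\otimes 1+1\otimes x_i^\ell$ and $S(x_i^\ell)=-x_i^\ell$, and the bosonization structure of Theorem 15, which is exactly your step one. The details you add --- the normal-form spanning bound matched against the biproduct $B\#\Bbbk[G]$ for the dimension, and the coalgebra-filtration argument for pointedness and the coradical --- are the standard facts the paper leaves implicit, not a different method.
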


\subsection{Multi-rank Taft (Hopf) algebra of \underline{$\ell$}-type}
Based on the above results (e.g. Corollary 18), we can generalize the multi-rank Taft algebra $\mathcal{TH}_q(m|n)$ of $(m|n)$-type to the most general case. Suppose a matrix $\mu:=(\mu_{ij})_{n\times n}$ with $\mu_{ij}\in\Bbbk^*$ satisfies:
\begin{equation}
\mu_{ij} \mu_{ji}=1, \quad (1\le i\ne j\le n);  \qquad \textbf{ord}(\mu_{ii})=\ell_i\in \mathbb N, \ i.e., \ \mu_{ii}^{\ell_i}=1, \quad
 (1\le i\le n).
\end{equation}
 Set $\underline{\ell}:=(\ell_1,\cdots,\ell_n)\in \mathbb N^n$, and denote by $\Gamma:=\mathbb Z_{\ell_1}\times\cdots\times\mathbb Z_{\ell_n}$ any finite abelian group.
Obviously, the matrix $\widehat\mu=\mu-\text{diag}\{\mu_{11}-1,\cdots,\mu_{nn}-1\}$ defines a bicharacter $\theta$ on $\Gamma$.
\begin{coro}
The multi-rank Taft algebra $\mathcal {TH}_\mu(\underline\ell)$ of $\underline{\ell}$-type is an associative $\Bbbk$-algebra generated by $x_i$ and $\mathcal K_i$ $(1\le i\le n)$, associated with the matrix $\mu$ in $(3.31)$, subject to relations below
\begin{gather*}
\mathcal K_i\mathcal K_j=\mathcal K_j\mathcal K_i,
\quad \mathcal K_i^{\pm1}\mathcal K_i^{\mp1}=1, \quad \mathcal K_i^{\ell_i}=1,\\
\mathcal K_ix_j\mathcal K_i^{-1}=\mu_{ij}\,x_j,\\
x_ix_j=\mu_{ij}\,x_jx_i, \quad (i\ne j)\\
x_i^{\ell_i}=0.
\end{gather*}

\noindent
Then $\mathcal {TH}_\mu(\underline\ell)$ is an $(\ell_1\cdots \ell_n)^2$-dimensional pointed Hopf algebra with the group algebra $\Bbbk [\Gamma]$ as its coradical, where its comultiplication $\Delta$, counit $\epsilon$, and antipode $S$ are given by
\begin{gather*}
\Delta: \mathcal {TH}_\mu(\underline{\ell})\longrightarrow
\mathcal {TH}_\mu(\underline{\ell})\otimes \mathcal {TH}_\mu(\underline{\ell})\\
\Delta(\mathcal K_i^{\pm 1})=\mathcal K_i^{\pm1}\otimes \mathcal K_i^{\pm1}, \\
\Delta(x_i)=x_i\otimes 1+\mathcal K_i\otimes x_i, \\
\epsilon: \mathcal {TH}_\mu(\underline{\ell})\longrightarrow \Bbbk\notag\\
\epsilon(\mathcal K_i^{\pm1})=1,\qquad
\epsilon(x_i)=0,\notag\\
S: \mathcal {TH}_\mu(\underline{\ell})\longrightarrow \mathcal {TH}_\mu(\underline{\ell})\\
S(\mathcal K_i^{\pm1})=\mathcal K_i^{\mp1},\qquad
S(x_i)=-\mathcal K_i^{-1}x_i.
\end{gather*}
\end{coro}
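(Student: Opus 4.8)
The plan is to treat $\mathcal{TH}_\mu(\underline\ell)$ as the natural generalization of $\mathcal{TH}_q(m|n)$ in Corollary 18, allowing distinct orders $\ell_i$ and an arbitrary bicharacter matrix $\mu$, and to establish it as the Radford biproduct (bosonization) of the Nichols algebra of the quantum linear space $V=\bigoplus_{i=1}^{n}\Bbbk\,x_i$ over $\Bbbk[\Gamma]$, where the Yetter--Drinfeld structure has degree $|x_i|=\mathcal K_i$, action $\mathcal K_i\cdot x_j=\mu_{ij}x_j$, and resulting braiding $c(x_i\otimes x_j)=\mu_{ij}\,x_j\otimes x_i$. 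The proof runs along exactly the lines of Theorem 11, so I would first verify that $\Delta,\epsilon,S$ are well defined on generators, then check the Hopf axioms, and finally pin down the dimension and the coradical.

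First I would confirm that $\Delta,\epsilon,S$ respect every defining relation. Since the $\mathcal K_i$ are group-like and each $x_i$ is $(\mathcal K_i,1)$-skew primitive, the relations among the $\mathcal K_i$'s and the relation $\mathcal K_ix_j\mathcal K_i^{-1}=\mu_{ij}x_j$ are immediate. For the quantum-commutation relation $x_ix_j=\mu_{ij}x_jx_i$ $(i\neq j)$, one uses $x_i\mathcal K_j=\mu_{ij}\mathcal K_jx_i$, which follows from $\mathcal K_jx_i\mathcal K_j^{-1}=\mu_{ji}x_i$ together with $\mu_{ij}\mu_{ji}=1$; a direct expansion then gives $\Delta(x_i)\Delta(x_j)=\mu_{ij}\,\Delta(x_j)\Delta(x_i)$, as required.

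The only genuinely arithmetic point is the nilpotency $x_i^{\ell_i}=0$. Since
\begin{equation*}
(\mathcal K_i\otimes x_i)(x_i\otimes 1)=\mu_{ii}\,(x_i\otimes 1)(\mathcal K_i\otimes x_i),
\end{equation*}
the two summands of $\Delta(x_i)$ are $\mu_{ii}$-commuting, so the $q$-binomial theorem (as in the computation of $\Delta(x_i^p)$ in Remark 16, with $q=\mu_{ii}$) yields
\begin{equation*}
\Delta(x_i)^{\ell_i}=\sum_{r=0}^{\ell_i}\binom{\ell_i}{r}_{\mu_{ii}}x_i^{\ell_i-r}\mathcal K_i^{\,r}\otimes x_i^{\,r}.
\end{equation*}
Because $\textbf{ord}(\mu_{ii})=\ell_i$, the coefficient $\binom{\ell_i}{r}_{\mu_{ii}}$ vanishes for $0<r<\ell_i$, so only the extreme terms $x_i^{\ell_i}\otimes 1$ and $\mathcal K_i^{\ell_i}\otimes x_i^{\ell_i}$ survive, and these vanish by $x_i^{\ell_i}=0$; hence $\Delta(x_i)^{\ell_i}=0$. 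Coassociativity and the counit axioms are then formal, and the antipode identities are checked on generators as in Theorem 11, e.g. $m\circ(1\otimes S)\circ\Delta(x_i)=x_i-\mathcal K_i\mathcal K_i^{-1}x_i=0=\epsilon(x_i)$, and similarly for $m\circ(S\otimes 1)\circ\Delta$.

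For the dimension and coradical, I would first observe that the ordered monomials $x_1^{a_1}\cdots x_n^{a_n}\mathcal K_1^{b_1}\cdots\mathcal K_n^{b_n}$ with $0\le a_i,b_i<\ell_i$ span $\mathcal{TH}_\mu(\underline\ell)$, since the relations let one reorder all factors and reduce every exponent below $\ell_i$. Their linear independence, and hence $\dim=(\ell_1\cdots\ell_n)^2$, I would deduce from the known PBW structure of the Nichols algebra $\mathfrak{B}(V)$ of a quantum linear space: for $\textbf{ord}(\mu_{ii})=\ell_i$ it has basis $\{x_1^{a_1}\cdots x_n^{a_n}\mid 0\le a_i<\ell_i\}$ of dimension $\ell_1\cdots\ell_n$, whence the biproduct $\mathfrak{B}(V)\#\Bbbk[\Gamma]$ has dimension $\dim\mathfrak{B}(V)\cdot|\Gamma|=(\ell_1\cdots\ell_n)^2$. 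Since $\mathcal{TH}_\mu(\underline\ell)$ is precisely such a bosonization, it is pointed with group-likes exactly $\Gamma$ and coradical $\Bbbk[\Gamma]$, the $x_i$ lying in the first step of the coradical filtration. The main obstacle is this last step, namely certifying that the defining relations impose no further collapse, so that the dimension is exactly $(\ell_1\cdots\ell_n)^2$; I would settle it by appealing to the established Nichols-algebra theory for quantum linear spaces rather than attempting a bare-hands linear-independence argument.
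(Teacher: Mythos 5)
Your proposal is correct and takes essentially the same route as the paper: its proof likewise consists in observing that replacing $q$ by $\mu_{ii}$ in the $q$-binomial expansion (3.30) makes $\Delta$ preserve $x_i^{\ell_i}=0$ and $x_ix_j=\mu_{ij}\,x_jx_i$, with all remaining Hopf-algebra verifications carried out on generators exactly as in Theorem 11. Your additional appeal to the PBW basis of the Nichols algebra of a quantum linear space (so that the bosonization $\mathfrak{B}(V)\#\Bbbk[\Gamma]$ has dimension $(\ell_1\cdots\ell_n)^2$ and coradical $\Bbbk[\Gamma]$) supplies the dimension and coradical claims, a step the paper's proof leaves implicit.
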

\begin{proof}
Replacing $q$ by $\mu_{ii}$ in (3.30), we get that $\Delta$ preserves relation: $x_i^{\ell_i}=0$, as well as
$x_ix_j=\mu_{ij}\,x_jx_i$. These are crucial points in the proof as that of Theorem 11.
\end{proof}
\begin{remark}
Replacing the assumption condition $\mathcal K_i^{\ell_i}=1$ $(1\le i\le n)$ in the above Theorem by $\mathcal K_i^{m_i}=1$ with $\ell_i\,|\,m_i$, $\underline \ell\ne \underline m$ $($where $m_i\in\mathbb N$$)$, we obtain the {\bf generalized multi-rank Taft algebra} $\mathcal {TH}_\mu(\underline \ell\,|\,\underline m)$, which is a pointed Hopf algebra of dimension $\Pi_{i=1}^n (\ell_im_i)$. In particular, when $n=1$, this is the generalized Taft algebra.

Notice that for the generalized multi-rank Taft algebras $\mathcal {TH}_\mu(\underline\ell\,|\,\underline m)$, By Andruskiewitsch-Schneider's lifting observation \cite{AS}, we have the well-known liftings $\mathcal {TH}_\mu^{\lambda,\nu}(\underline\ell\,|\,\underline m)$ for any family of parameters $\lambda=(\lambda_i)$, $\nu=(\nu_{ij})$ with $\lambda_i, \nu_{ij}\in\mathbb C$, which are pointed Hopf algebras satisfying the relations
$x_ix_j=\mu_{ij}x_jx_i+\nu_{ij}(1-\mathcal K_i\mathcal K_j)$,
$x_i^{\ell_i}=\lambda_i(1-\mathcal K_i^{\ell_i})$ and $\mathcal K_i^{m_i}=1$ for each $i$ with $\ell_i\,|\,m_i$ and at least one $i_0$ such that $\ell_{i_0}<m_{i_0}$. $\lambda=(0)$ and $\nu=(0)$ corresponds to the trivial lifting, $\mathcal{TH}_\mu(\underline \ell\,|\,\underline m)$.
\end{remark}

\subsection{Bosonization of quantum Grassmann superalgebra} Recall the quantum Grassmann superalgebra $\Omega_q(m|n)=\mathcal A_q(m)\otimes_{\Bbbk}\Lambda_{q^{-1}}(n)$, as vector superspaces, where $\mathcal A_q(m)$ has a quantum divided power algebra structure described in (2.5). The assertion of Proposition 7 indicates that $\Omega_q(m|n)$ is indeed a dual object of the quantum affine $(m|n)$-superspace $A_q^{(m|n)}$. In order to get a suitable bosonization of $\Omega_q(m|n)$ compatible with its quantum divided power algebra structure, we cannot direct adopt Definition 14 but need to revise defining relation (3.26) into (3.33).
Observe that when $\textbf{char}(q)=\ell$ $(\ge 3)$ the quantum divided power algebra $\mathcal A_q(m)$ is generated by elements
$x_i$  and $x_i^{(\ell)}$ $(i\in I_0)$ (see Proposition 2.4 in \cite{Hu}), otherwise, its generators are just $x_i$ $(i\in I_0)$. Hence we have the following
\begin{proposition} Assume that $\mathfrak G_q(m|n)$ is an associative $\Bbbk$-algebra generated by the quantum Grassmann
superalgebra $\Omega_q(m|n)$, as well as group-likes $\mathcal K_i\ (i\in I)$, associated with the bicharacter $\theta$ in $(3.10)$ defined on $\mathbb Z^m\times\mathbb Z_2^n$, subject to the relations below
\begin{gather}
\mathcal K_i\mathcal K_j=\mathcal K_j\mathcal K_i,
\qquad \mathcal K_i^{\pm1}\mathcal K_i^{\mp1}=1, \quad (i, j\in I);\qquad \mathcal K_j^2=1, \quad (j\in I_1)\notag\\
\mathcal K_i^\ell=1,\quad x_i^{(\ell)} \ \textit{are central}, \ (i\in I_0),\quad (\textit{when }\ \textbf{char}(q)=\ell\ \textit{is odd})\\
\mathcal K_ix_j\mathcal K_i^{-1}=q^{2\delta_{ij}}\theta(\epsilon_i,\epsilon_j)\,x_j,\quad (j\in I_0)\\
\mathcal K_ix_j\mathcal K_i^{-1}=(-1)^{\delta_{ij}}\theta(\epsilon_i,\epsilon_j)\,x_j,\quad (j\in I_1)\notag\\
x_ix_j=\theta(\epsilon_i,\epsilon_j)\,x_jx_i, \quad (i, j\in I)\notag\\
x_j^2=0, \quad (j\in I_1).\notag\\
\Delta: \mathfrak G_q(m|n)\longrightarrow
\mathfrak G_q(m|n)\otimes \mathfrak G_q(m|n)\notag\\
\Delta(\mathcal K_i^{\pm 1})=\mathcal K_i^{\pm1}\otimes \mathcal K_i^{\pm1}, \quad (i\in I)\notag\\
\Delta(x_i)=x_i\otimes 1+\mathcal K_i\otimes x_i, \quad (i\in I)\notag\\
\Delta(x_i^{(\ell)})=x_i^{(\ell)}\otimes 1+1\otimes x_i^{(\ell)}, \quad (i\in I_0)\\
\epsilon: \mathfrak G_q(m|n)\longrightarrow \Bbbk\notag\\
\epsilon(\mathcal K_i^{\pm1})=1,\qquad
\epsilon(x_i)=0,\notag\\
S: \mathfrak G_q(m|n)\longrightarrow \mathfrak G_q(m|n)\notag\\
S(\mathcal K_i^{\pm1})=\mathcal K_i^{\mp1},\notag\\
S(x_i)=-\mathcal K_i^{-1}x_i,\quad (i\in I),\notag \\
S(x_i^{(\ell)})=-x_i^{(\ell)}, \quad (i\in I_0). \notag
\end{gather}
Then $\mathfrak G_q(m|n)$ is a pointed Hopf algebra and contains $\Omega_q(m|n)$ as a Nichols subsuperalgebra,
as well as $\Bbbk [x_1^{(\ell)},\cdots,x_m^{(\ell)}]$ as usual Hopf polynomial central subalgebra.
\end{proposition}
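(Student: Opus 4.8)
The plan is to realize $\mathfrak G_q(m|n)$ as a Radford--Majid bosonization $\Omega_q(m|n)\,\#\,\Bbbk[\Gamma]$ with $\Gamma=\mathbb Z^m\times\mathbb Z_2^n$, following verbatim the pattern of Theorem 11 and Theorem 15: first endow $\Omega_q(m|n)$ with the structure of a braided Hopf (super)algebra in the Yetter--Drinfeld category over $\Bbbk[\Gamma]$ whose braiding is read off from the bicharacter $\theta$ of $(3.10)$, then bosonize. Concretely I would first check that the assignments $(3.34)$--$(3.36)$ respect the defining relations $(3.32)$--$(3.33)$, so that $\Delta,\epsilon,S$ are well defined. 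The group-like relations of $(3.32)$ are immediate; the quasi-commutation relations $\mathcal K_ix_j\mathcal K_i^{-1}=q^{2\delta_{ij}}\theta(\epsilon_i,\epsilon_j)x_j$ $(j\in I_0)$ and $\mathcal K_ix_j\mathcal K_i^{-1}=(-1)^{\delta_{ij}}\theta(\epsilon_i,\epsilon_j)x_j$ $(j\in I_1)$, together with the braided commutations $x_ix_j=\theta(\epsilon_i,\epsilon_j)x_jx_i$ and $x_j^2=0$, are handled exactly as the relations $\partial_i\partial_j=\theta(\epsilon_i,\epsilon_j)\partial_j\partial_i$, $\partial_j^2=0$ were handled in Theorem 11, reducing to the bicharacter identity for $\theta$ and the quasi-commutativity $(\mathcal K_i\otimes x_i)(x_i\otimes 1)=q^2(x_i\otimes 1)(\mathcal K_i\otimes x_i)$ in the tensor square. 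Here the decisive point, and the reason relation $(3.26)$ had to be revised into $(3.33)$, is that the exponent must be $q^{2}$ rather than $q$: with $x_i\cdot x_i=[\,2\,]x_i^{(2)}$ in $\mathcal A_q(m)$ one computes $\Delta(x_i)^2=x_i^2\otimes 1+(1{+}q^2)x_i\mathcal K_i\otimes x_i+\mathcal K_i^2\otimes x_i^2$, and $1{+}q^2=[\,2\,]q$ is exactly what is needed for $\Delta$ to be compatible with the divided-power multiplication $(2.5)$; the choice $q$ fails already here.

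The genuinely new ingredient, which I expect to be the main obstacle, is the treatment of the divided-power generators $x_i^{(\ell)}$ $(i\in I_0)$ occurring when $\textbf{char}(q)=\ell\ (\geq3)$ is odd, and the assertion $\Delta(x_i^{(\ell)})=x_i^{(\ell)}\otimes 1+1\otimes x_i^{(\ell)}$ of $(3.34)$. The braided coproduct on the $I_0$-part is forced by $\underline\Delta(x_i)=x_i\otimes 1+1\otimes x_i$, the self-braiding $q^2$ and the identity $x_i^{t}=[\,t\,]!\,x_i^{(t)}$ to be the quantum divided-power coproduct $\underline\Delta(x_i^{(t)})=\sum_{a+b=t}q^{ab}x_i^{(a)}\otimes x_i^{(b)}$ for $t<\ell$; the crux is that $x_i^{(\ell)}$ is not a power of $x_i$, since $x_i^{\ell}=[\,\ell\,]!\,x_i^{(\ell)}=0$ (as $[\,\ell\,]=0$), so it is a genuinely independent generator (cf. Proposition 2.4 in \cite{Hu}) whose coproduct we are free to prescribe. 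I would then verify that the primitive prescription is consistent with every relation in which $x_i^{(\ell)}$ participates. The key computation is that, for $a+b=\ell$, the obstructing coefficient collapses via the quantum identity $q^{(a+1)b}[\,a\,]+q^{a(b-1)}[\,b\,]=[\,a{+}b\,]q^{ab}=[\,\ell\,]q^{ab}=0$, so that $\underline\Delta(x_i^{(\ell-1)})\,\underline\Delta(x_i)=0$ matches $\underline\Delta(x_i^{(\ell-1)}x_i)=\underline\Delta([\,\ell\,]x_i^{(\ell)})=0$ without constraining $\underline\Delta(x_i^{(\ell)})$; the same $[\,\ell k\,]=0$ phenomenon settles the relations linking $x_i^{(\ell)}$ to the higher divided powers $x_i^{(\ell+j)}$. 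Centrality of $x_i^{(\ell)}$ is then checked directly from $(2.5)$ and $(3.33)$: the relevant scalars are $\theta_+(\epsilon_i,\epsilon_j)^{\ell}=q^{\pm\ell}=1$ and $\mathcal K_jx_i^{(\ell)}\mathcal K_j^{-1}=q^{2\ell\delta_{ij}}\theta(\epsilon_j,\epsilon_i)^{\ell}x_i^{(\ell)}=x_i^{(\ell)}$, both trivialized by $q^{\ell}=1$ in the odd case. Given primitivity and centrality, $\epsilon(x_i^{(\ell)})=0$ and $S(x_i^{(\ell)})=-x_i^{(\ell)}$ satisfy the counit and antipode axioms at once, and coassociativity on $x_i^{(\ell)}$ is automatic.

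Once $\Omega_q(m|n)$ is a braided Hopf superalgebra with the above $\underline\Delta$, its bosonization is a genuine Hopf algebra, and the relations and structure maps $(3.32)$--$(3.36)$ are exactly those read off from the biproduct (the antipode and coassociativity on the remaining generators being verified on generators precisely as in the final displays of the proof of Theorem 11). For the three global claims: since $\Omega_q(m|n)$ carries a connected $\mathbb Z_{+}$-grading by total monomial degree with $\Omega_q(m|n)^{(0)}=\Bbbk$, the associated coradical filtration of the biproduct has coradical equal to the group algebra $\Bbbk[\mathbb Z^m\times\mathbb Z_2^n]$ spanned by the $\mathcal K_i$; hence $\mathfrak G_q(m|n)$ is pointed with this coradical, and $\Omega_q(m|n)$ sits inside as the braided Hopf subalgebra of $\Bbbk[\Gamma]$-coinvariants, i.e. as the claimed Nichols subsuperalgebra (the Nichols superalgebra generated by the $x_i$ when $q$ is generic, with structure as recorded in Proposition 7). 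Finally, by the previous paragraph the $x_i^{(\ell)}$ are primitive, central, and mutually commute with trivial $\theta$-factor, so $\Bbbk[x_1^{(\ell)},\cdots,x_m^{(\ell)}]$ is a polynomial algebra all of whose generators are primitive; it is therefore a central Hopf subalgebra carrying the usual cocommutative Hopf structure, which completes the proof.
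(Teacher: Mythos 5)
Your proposal is correct and follows essentially the same route as the paper's proof: the paper likewise observes that the revised relation $(3.33)$ yields the quasi-commutation $(\mathcal K_i\otimes x_i)(x_i\otimes 1)=q^2(x_i\otimes 1)(\mathcal K_i\otimes x_i)$, hence the expansion $\Delta(x_i^{(p)})=\sum_{r=0}^{p}q^{r(p-r)}\,x_i^{(p-r)}\mathcal K_i^{r}\otimes x_i^{(r)}$ for $p<\ell$ together with the primitive prescription $\Delta(x_i^{(\ell)})=x_i^{(\ell)}\otimes 1+1\otimes x_i^{(\ell)}$, and then delegates all remaining verifications to ``the analogy of argument of Theorem 11,'' exactly as you do. Your write-up actually supplies details the paper compresses (the identity $q^{(a+1)b}[\,a\,]+q^{a(b-1)}[\,b\,]=q^{ab}[\,\ell\,]=0$ showing the primitive choice for $x_i^{(\ell)}$ is unconstrained, the centrality, pointedness, and subalgebra claims); the only quibble is that your parenthetical assertion that the choice $q$ of $(3.26)$ ``fails already'' at $p=2$ is overstated when $\ell$ is odd, where that choice merely destroys the divided-power form $q^{ab}$ of the coefficients rather than violating any defining relation.
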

\begin{proof}
From the revised relation (3.33), we see that $(\mathcal K_i\otimes x_i)(x_i\otimes 1)=q^2(x_i\otimes 1)(\mathcal K_i\otimes x_i)$, and furthermore leads to
$$\Delta(x_i^p)=(\Delta(x_i))^p=\sum_{r=0}^p{\,p\,\brack \,r\,}_q q^{\binom{p}2-\binom{r}2-\binom{p-r}2}x_i^{p-r}\mathcal K_i^r\otimes x_i^r,$$
so that we get $\Delta(x_i^{(p)})=\sum_{r=0}^pq^{\binom{p}2-\binom{r}2-\binom{p-r}2}x_i^{(p-r)}\mathcal K_i^r\otimes x_i^{(r)}$ for $p<\ell$, and  $\Delta(x_i^{(\ell)})=x_i^{(\ell)}\otimes 1+1\otimes x_i^{(\ell)}$, under the assumption of  $\textbf{char}(q)=\ell$ and $\mathcal K_i^\ell=1$.

Based on the above observation and by analogy of argument of Theorem 11, we can check the assertion holds.
\end{proof}

Furthermore, for the quantum restricted Grassmann subsuperalgebra $\Omega_q(m|n,\bold 1)$, we can consider its
bosonization. According to Proposition 21, we easily obtain the following
\begin{coro} Suppose $\textbf{char}(q)=\ell$ is odd. The bosonization of $\Omega_q(m|n,\bold 1)$ is an $(\ell^m\cdot 2^n)^2$-dimensional pointed Hopf algebra $\mathfrak G_q(m|n,\bold 1)$, which contains $\Omega_q(m|n,\bold 1)$ as its Nichols
subsuperalgebra and has the coradical $\Bbbk [\mathbb Z_\ell^m\times\mathbb Z_2^n]$ associated with the bicharacter $\theta$
defined over $\mathbb Z_\ell^m\times\mathbb Z_2^n$ $($see $(3.10))$, satisfying the relations below
\begin{gather*}
\mathcal K_i\mathcal K_j=\mathcal K_j\mathcal K_i,
\qquad \mathcal K_i^{\pm1}\mathcal K_i^{\mp1}=1,\\
\mathcal K_i^\ell=1, \quad (i\in I_0),\qquad \mathcal K_j^2=1, \quad (j\in I_1),\\
\mathcal K_ix_j\mathcal K_i^{-1}=q^{2\delta_{ij}}\theta(\epsilon_i,\epsilon_j)\,x_j,\quad (j\in I_0),\\
\mathcal K_ix_j\mathcal K_i^{-1}=(-1)^{\delta_{ij}}\theta(\epsilon_i,\epsilon_j)\,x_j,\quad (j\in I_1),\\
x_ix_j=\theta(\epsilon_i,\epsilon_j)\,x_jx_i, \quad (i, j\in I),\\
x_i^\ell=0, \quad (i\in I_0), \qquad x_j^2=0, \quad (j\in I_1).
\end{gather*}
and with the same comultiplication $\Delta$, counit $\epsilon$ and antipode $S$ as in $(3.27)$--$(3.29)$.
\end{coro}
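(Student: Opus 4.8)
The plan is to realise $\mathfrak G_q(m|n,\bold 1)$ as a quotient Hopf algebra of the (infinite-dimensional) pointed Hopf algebra $\mathfrak G_q(m|n)$ of Proposition 21, specialised to the case $\textbf{char}(q)=\ell$ odd, exactly parallel to the way Corollary 12 and Corollary 18 are deduced from their parent theorems. First I would single out the two-sided ideal $\mathcal I_0$ generated by the divided-power elements $x_i^{(\ell)}$ $(i\in I_0)$. By Proposition 21 these elements are central, and by the coproduct formula $(3.34)$ they are primitive, i.e. $\Delta(x_i^{(\ell)})=x_i^{(\ell)}\otimes 1+1\otimes x_i^{(\ell)}$, $\epsilon(x_i^{(\ell)})=0$ and $S(x_i^{(\ell)})=-x_i^{(\ell)}$. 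For such a central primitive generator $z$ one checks directly that $\Delta(zh)=(z\otimes 1+1\otimes z)\Delta(h)$ lies in $\mathcal I_0\otimes \mathfrak G_q(m|n)+\mathfrak G_q(m|n)\otimes \mathcal I_0$, while $\epsilon(z)=0$ and $S(z)\in\mathcal I_0$; hence $\mathcal I_0$ is a Hopf ideal and the quotient $\mathfrak G_q(m|n)/\mathcal I_0$ inherits a pointed Hopf algebra structure with the descended maps $(3.27)$--$(3.29)$.

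Next I would identify this quotient with the algebra presented in the statement. On the coordinate part, killing the $x_i^{(\ell)}$ replaces $\mathcal A_q(m)$ by $\mathcal A_q(m)/(x_i^{(\ell)}:i\in I_0)=\mathcal A_q(m,\bold 1)$; here one uses that, for $k<\ell$, $x_i^k=[\,k\,]!\,x_i^{(k)}$ with $[\,k\,]!\neq 0$, whereas $x_i^\ell=[\,\ell\,]!\,x_i^{(\ell)}=0$ already in $\mathcal A_q(m)$ because $[\,\ell\,]=0$, so that $\mathcal A_q(m,\bold 1)$ is generated by the $x_i$ subject only to $x_i^\ell=0$. Tensoring with $\Lambda_{q^{-1}}(n)$ shows that the image of $\Omega_q(m|n)$ in the quotient is precisely $\Omega_q(m|n,\bold 1)$, a Nichols subsuperalgebra. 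The group-likes are untouched: since $\ell$ is odd, $\mathcal K_i^\ell=1$ $(i\in I_0)$ already holds in $\mathfrak G_q(m|n)$ by $(3.32)$, and together with $\mathcal K_j^2=1$ $(j\in I_1)$ they generate $\mathbb Z_\ell^m\times\mathbb Z_2^n$. All remaining relations of the statement are then exactly the relations $(3.32)$--$(3.33)$ of $\mathfrak G_q(m|n)$ read modulo $\mathcal I_0$, and the structure maps agree with $(3.27)$--$(3.29)$.

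Finally, I would read off the dimension and the coradical from the bosonization structure. As a Radford biproduct, $\mathfrak G_q(m|n,\bold 1)\cong \Omega_q(m|n,\bold 1)\#\,\Bbbk[\mathbb Z_\ell^m\times\mathbb Z_2^n]$, so that $\dim\mathfrak G_q(m|n,\bold 1)=\dim\Omega_q(m|n,\bold 1)\cdot\dim\Bbbk[\mathbb Z_\ell^m\times\mathbb Z_2^n]$. Using $\dim\mathcal A_q(m,\bold 1)=\ell^m$ from Subsection 2.5 and $\dim\Lambda_{q^{-1}}(n)=2^n$ gives $\dim\Omega_q(m|n,\bold 1)=\ell^m\cdot 2^n$, whence the total $(\ell^m\cdot 2^n)^2$; and the coradical is the group algebra $\Bbbk[\mathbb Z_\ell^m\times\mathbb Z_2^n]$ since the bosonization of a connected Nichols superalgebra over a group algebra is pointed with that group algebra as coradical. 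The step I expect to be the main obstacle is making the dimension count exact: one must verify that passing to $\mathfrak G_q(m|n)/\mathcal I_0$ collapses nothing beyond the divided powers $x_i^{(k)}$ with $k\geq\ell$, equivalently that $\mathfrak G_q(m|n)$ is free over its central polynomial Hopf subalgebra $\Bbbk[x_1^{(\ell)},\cdots,x_m^{(\ell)}]$, so that the monomial basis $\{x^{(\beta)}\otimes x^\mu\mid \beta\leq\tau(m),\,\mu\in\mathbb Z_2^n\}$ of $\Omega_q(m|n,\bold 1)$ stays linearly independent in the quotient and the bosonization decomposition is preserved.
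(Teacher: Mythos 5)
Your proposal is correct and is essentially the paper's own route: the paper offers no argument for this corollary beyond the phrase ``according to Proposition 21,'' and your mechanism of quotienting $\mathfrak G_q(m|n)$ by the Hopf ideal $\mathcal I_0$ generated by the central primitives $x_i^{(\ell)}$ $(i\in I_0)$ is exactly the pattern the paper itself uses for the analogous restricted objects $\mathfrak D_q(m|n,\bold 1)$ (before Corollary 12) and $\mathcal{TH}_q(m|n)=\mathfrak A_q(m|n,\bold 1)$ (Definition 17). The step you flag as the main obstacle is not a genuine gap, since it follows from tools already in the paper: Proposition 21 realizes $\mathfrak G_q(m|n)$ as a bosonization, hence with $\Bbbk$-basis $\{(x^{(\beta)}\otimes x^\mu)\,g\mid \beta\in\mathbb Z_+^m,\ \mu\in\mathbb Z_2^n,\ g\in\langle\mathcal K_i\rangle\cong\mathbb Z_\ell^m\times\mathbb Z_2^n\}$, and for any $\beta$ with $\beta_i\geq\ell$ one has $x^{(\beta-\ell\epsilon_i)}x_i^{(\ell)}=q^{(\beta-\ell\epsilon_i)\ast(\ell\epsilon_i)}{\,\beta_i\,\brack\,\ell\,}\,x^{(\beta)}$ with ${\,\beta_i\,\brack\,\ell\,}\neq 0$ by Lemma 1(ii) (nonzero as an integer, hence nonzero in $\Bbbk$ of characteristic zero), so $\mathcal I_0$ is precisely the span of the basis vectors having some $\beta_i\geq\ell$; therefore the restricted monomials with $\beta\leq\tau(m)$ stay linearly independent in the quotient, which yields the biproduct decomposition $\Omega_q(m|n,\bold 1)\#\,\Bbbk[\mathbb Z_\ell^m\times\mathbb Z_2^n]$, the dimension $(\ell^m\cdot 2^n)^2$, and the coradical claim.
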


\begin{remark}
For simplifying our discussion, both in Proposition 21 and Corollary 22, we assumed $\textbf{char}(q)=\ell$ is odd, otherwise, the relations would appear more complicate a bit $($see below, $(3.50)$--$(3,51)$, $(3.54)$$)$. In fact, in Proposition 21, we can cancel the defining condition $(3.32)$ whatever $\textbf{char}(q)=\ell$ is. On the other hand, we note that
two pointed Hopf algebras $\mathcal{TH}_q(m|n)$ and $\mathfrak G_q(m|n,\bold 1)$ in Corollaries 18 $\&$ 22 have the same dimension and almost the same relations except for the difference between $(3.26)$ and $(3.33)$.
\end{remark}

\subsection{Quantum Weyl algebra $\mathcal W_q(2(m|n))$ of $(m|n)$-type}
Now we return to Lemma 9 \& Theorem 11, Corollary 12. Recall that $\mathfrak D_q(m|n)$  (resp. $\mathfrak D_q(m|n, \bold 1)$) is the quantum (reps. restricted) differential Hopf algebra
 appearing in subsection 3.5. Lemma 9 says that the quantum (resp. restricted) Grassmann superalgebra $\Omega_q(m|n)$ (resp. $\Omega_q(m|n, \bold 1)$) is a $\mathfrak D_q(m|n)$-module superalgebra (resp. $\mathfrak D_q(m|n, \bold 1)$-module superalgebra). So we can make their smash product algebra $\Omega_q(m|n)\# \mathfrak D_q(m|n)$ (resp.
$\Omega_q(m|n, \bold 1)\# \mathfrak D_q(m|n, \bold 1)$) in a familiar fashion as in \cite{Mon}, \cite{Rad}, \cite{SW},
which is the same as $\Omega_q(m|n)\otimes \mathfrak D_q(m|n)$ (resp. $\Omega_q(m|n, \bold 1)\otimes \mathfrak D_q(m|n, \bold 1)$) as vector spaces but with the multiplication given by
\begin{gather}
\bigl((x^{(\alpha)}\otimes x^\mu)\#\partial_i\bigr)\circ\bigl((x^{(\beta)}\otimes x^\nu)\#d\bigr)=(x^{(\alpha)}\otimes x^\mu)\partial_i(x^{(\beta)}\otimes x^\nu)\#\sigma_i^{-1}d\\
+(x^{(\alpha)}\otimes x^\mu)(\Theta(-\epsilon_i)\sigma_i)(x^{(\beta)}\otimes x^\nu)\#\partial_id, \quad (i\in I_0)\notag\\
((x^{(\alpha)}\otimes x^\mu)\#\partial_j)\circ((x^{(\beta)}\otimes x^\nu)\#d)=(x^{(\alpha)}\otimes x^\mu)\partial_j(x^{(\beta)}\otimes x^\nu)\#d\\
+(x^{(\alpha)}\otimes x^\mu)(\Theta(-\epsilon_j)\tau_j)(x^{(\beta)}\otimes x^\nu)\#\partial_jd, \quad (j\in I_1)\notag\\
((x^{(\alpha)}\otimes x^\mu)\#g)\circ((x^{(\beta)}\otimes x^\nu)\#d)=(x^{(\alpha)}\otimes x^\mu)g(x^{(\beta)}\otimes x^\nu)\#gd,
\end{gather}
where $\Delta(\partial_i)=\partial_i\otimes\sigma_i^{-1}+\Theta(-\epsilon_i)\sigma_i\otimes\partial_i$ ($i\in I_0$), and $\Delta(\partial_j)=\partial_j\otimes 1+\Theta(-\epsilon_j)\tau_j\otimes\partial_j$ ($j\in I_1$), and
$\Delta(g)=g\otimes g$, for $x^{(\alpha)}\otimes x^\mu, x^{(\beta)}\otimes x^\nu\in\Omega_q$, and
$\partial_i, g, d\in\mathfrak D_q$, and $\Omega_q=\Omega_q(m|n)$ or $\Omega_q(m|n, \bold 1)$, $\mathfrak D_q=\mathfrak D_q(m|n)$ or $\mathfrak D_q(m|n, \bold 1)$.

Note that $\Omega_q(m|n)$ has generators $x_i$ $(i\in I)$ when $\textbf{char}(q)=0$, so is for $\Omega_q(m|n, \bold 1)$ when
$\textbf{char}(q)=\ell>0$; while $\Omega_q(m|n)$ has generators $x_i$ $(i\in I)$ and $x_j^{(\ell)}$ $(j\in I_0)$ when $\textbf{char}(q)=\ell>0$. On the other hand, both $\mathfrak D_q(m|n)$ and its truncated object $\mathfrak D_q(m|n, \bold 1)$ have the same generators $\partial_i$, $\Theta(\epsilon_i)$, $\sigma_i$ $(i\in I)$, and $\tau_j$ $(j\in I_1)$ whatever $\textbf{char}(q)$ is.

More precisely, we have the cross relations between generators
\begin{equation}
\begin{split}
((x^{(\alpha)}\otimes x^\mu)\#1)\circ(1\# d)&=(x^{(\alpha)}\otimes x^\mu)\#d,\\
(1\#\Theta(\epsilon_i))\circ(x_j\#1)&=\theta(\epsilon_i,\epsilon_j)x_j\#\Theta(\epsilon_i),\quad (i\in I)\\
(1\#\sigma_i)\circ(x_j\#1)&=q^{\delta_{ij}}x_j\#\sigma_i,\quad (i\in I)\\
(1\#\tau_i)\circ(x_j\#1)&=(-1)^{\delta_{ij}}x_j\#\tau_i, \quad (i\in I_1)\\
(1\#\partial_i)\circ(x_j\#1)&=\delta_{ij}\#\sigma_i^{-1}+\theta(\epsilon_j,\epsilon_i)q^{\delta_{ij}}x_j\#\partial_i,\quad (i\in I_0)\\
(1\#\partial_i)\circ(x_j\#1)&=\delta_{ij}\#1+\theta(\epsilon_j,\epsilon_i)(-1)^{\delta_{ij}}x_j\#\partial_i,\quad (i\in I_1)\\
(1\#\Theta(\epsilon_i))\circ(x_j^{(\ell)}\#1)&=\theta(\epsilon_i,\ell\epsilon_j)x_j^{(\ell)}\#\Theta(\epsilon_i),\quad (i\in I)\\
(1\#\sigma_i)\circ(x_j^{(\ell)}\#1)&=q^{\ell\delta_{ij}}x_j^{(\ell)}\#\sigma_i,\quad (i\in I)\\
(1\#\tau_i)\circ(x_j^{(\ell)}\#1)&=(-1)^{\ell\delta_{ij}}x_j^{(\ell)}\#\tau_i, \quad (i\in I_1)\\
(1\#\partial_i)\circ(x_j^{(\ell)}\#1)&=\delta_{ij}x_j^{(\ell-1)}\#\sigma_i^{-1}
+\theta(\ell\epsilon_j,\epsilon_i)q^{\ell\delta_{ij}}x_j^{(\ell)}\#\partial_i,\quad (i\in I_0)\\
(1\#\partial_i)\circ(x_j^{(\ell)}\#1)&=\delta_{ij}x_j^{(\ell-1)}\#1
+\theta(\ell\epsilon_j,\epsilon_i)(-1)^{\delta_{ij}\ell}x_j^{(\ell)}\#\partial_i,\quad (i\in I_1)
\end{split}
\end{equation}

Recall that $\textbf{char}(q)=\ell$ implies two cases: (i) $\ell$ is odd, and $\textbf{ord}(q)=\ell$; (ii) $\ell=2r$, and
$\textbf{ord}(q)=2\ell$, i.e., $q^\ell=-1$. Now identify elements $(x^{(\alpha)}\otimes x^\mu)\#d$ in $\Omega_q\#\mathfrak D_q$
with $(x^{(\alpha)}\otimes x^\mu)d$, then the smash product algebra $\Omega_q\#\mathfrak D_q$ containing
$\Omega_q$ and $\mathfrak D_q$ as subalgebras are just the {\it quantum differential operator algebras} $\textrm{Diff}_q(\Omega_q)$ over the quantum (restricted) Grassmann superalgebra $\Omega_q$, which will degenerate into
the usual differential operator algebra when $q$ takes $1$. Therefore, we arrive at the following
\begin{defi}
The quantum $($restricted$)$ Weyl algebra $\mathcal W_q(2(m|n))$ $($$\mathcal W_q(2(m|n), \bold 1)$$)$ of $(m|n)$-type is defined to be the quantum differential operator algebra $\textrm{Diff}_q(\Omega_q)$, namely, the smash product algebra $\Omega_q\#\mathfrak D_q$, which is an associative $\Bbbk$-algebra generated by the symbols $\Theta(\pm\epsilon_i)$,
$\sigma_i^{\pm1}$ $(i\in I)$, $\tau_j$ $(j\in I_1)$, $\partial_i$, $x_i$ $(i\in I)$, in addition,
$x_i^{(\ell)}$ $(i\in I_0)$ when $\textbf{char}(q)=\ell>0$, obeying the following cross relations, besides those relations in $\Omega_q$ and $\mathfrak D_q$:
\begin{gather}
\Theta(\epsilon_i)\circ x_j\circ\Theta(-\epsilon_i)=\theta(\epsilon_i, \epsilon_j)x_j,\\
\sigma_i\circ x_j\circ \sigma_i^{-1}=q^{\delta_{ij}}x_j, \qquad \tau_i\circ x_j\circ \tau_i=(-1)^{\delta_{ij}}x_j,\\
\partial_i\circ x_i-qx_i\circ\partial_i=\sigma_i^{-1},\quad (i\in I_0),
\end{gather}
\begin{gather}
\partial_i\circ x_i+x_i\circ\partial_i=1,\quad (i\in I_1),\\
\partial_i\circ x_j=\theta(\epsilon_j,\epsilon_i)x_j\circ\partial_i, \quad (i\ne j\in I),
\end{gather}
$(${\rm i}$)$ When $\ell$ is odd, i.e., $q^\ell=1$, in addition, we have:
\begin{gather}
\Theta(\epsilon_i)\circ x_j^{(\ell)}=x_j^{(\ell)}\circ\Theta(\epsilon_i),\quad (j\in I_0),\\
\sigma_i\circ x_j^{(\ell)}=x_j^{(\ell)}\circ \sigma_i, \quad (j\in I_0),\\
\tau_i\circ x_j^{(\ell)}=x_j^{(\ell)}\circ\tau_i, \quad (i\in I_1, j\in I_0),
\\
\partial_i\circ x_i^{(\ell)}-x_i^{(\ell)}\circ\partial_i=x_i^{(\ell-1)}\circ\sigma_i^{-1}, \quad (i\in I_0),\\
\partial_i\circ x_j^{(\ell)}=x_j^{(\ell)}\circ \partial_i, \quad (i\in I, j\in I_0, i\ne j),\\
x_i\circ x_j^{(\ell)}=x_j^{(\ell)}\circ x_i, \quad (i\in I, j\in I_0),
\end{gather}
$(${\rm ii}$)$ When $\ell$ is even, i.e., $q^\ell=-1$, in addition, we have:
\begin{gather}
\Theta(\epsilon_i)x_i^{(\ell)}=x_i^{(\ell)}\Theta(\epsilon_i),\qquad \sigma_ix_i^{(\ell)}=-x_i^{(\ell)}\sigma_i,\\
\Theta(\epsilon_i)x_j^{(\ell)}=-x_j^{(\ell)}\Theta(\epsilon_i),\qquad \sigma_ix_j^{(\ell)}=x_j^{(\ell)}\sigma_i,\qquad \tau_ix_j^{(\ell)}=x_j^{(\ell)}\tau_i,\quad (i\ne j),\\
\partial_i\circ x_i^{(\ell)}+x_i^{(\ell)}\circ\partial_i=x_i^{(\ell-1)}\circ\sigma_i^{-1},\\
\partial_i\circ x_j^{(\ell)}=-x_j^{(\ell)}\circ\partial_i,\quad (i\in I, j\in I_0, i\ne j),\\
x_ix_j^{(\ell)}=-(-1)^{\delta_{ij}}x_j^{(\ell)}x_i,\quad (i\in I, j\in I_0).
\end{gather}
\end{defi}

In what follows, the quantum Weyl algebra of $(m|n)$-type we constructed here will serves as an important framework
in which it allows us to realize the bosonization object of the quantum general linear superalgebra $U_q(\mathfrak{gl}(m|n))$
in terms of certain suitable quantum differential operators in $\textrm{Diff}_q(\Omega_q)$.

\section{The $\mathcal U_q$-module algebra structure over $\Omega_q$ and its simple modules}

\subsection{Bosonization $\mathcal U_q(\mathfrak{gl}(m|n))$ of quantum superalgebra $U_q(\mathfrak{gl}(m|n))$}
The quantum general linear superalgebra $U_q(\mathfrak{gl}(m|n))$ has been introduced in subsection 2.3. In order to define the Hopf algebra structure, we introduce the parity operator $\sigma$ on $U_q(\mathfrak{gl}(m|n))$, which is defined by
$\sigma(E_i)=(-1)^{p(E_i)}E_i$, $\sigma(F_i)=(-1)^{p(F_i)}F_i$, and $\sigma(K_i)=K_i$, for all $i\in I$. Clearly, such $\sigma$ defines an automorphism of $U_q(\mathfrak{gl}(m|n))$ of order $2$. Then we have the following
\begin{defi}
The bosonization of $U_q(\mathfrak{gl}(m|n))$ is defined to be the smash product algebra $\mathcal U_q(\mathfrak {gl}(m|n))=U_q(\mathfrak{gl}(m|n))\#\Bbbk [\mathbb Z_2]:=U_q(\mathfrak{gl}(m|n))\oplus U_q(\mathfrak{gl}(m|n))\sigma$,
which is an associative $\Bbbk$-algebra generated by those $x\in U_q(\mathfrak{gl}(m|n))$, and the parity element $\sigma$
with multiplication given by $\sigma^2=1$ and $\sigma\, x\,\sigma=\sigma(x)$ for any $x\in U_q(\mathfrak{gl}(m|n))$.
Now $\mathcal U_q(\mathfrak{gl}(m|n))$ has a Hopf algebra structure whose comultiplication is the algebra homomorphism $\Delta:
\mathcal U_q(\mathfrak{gl}(m|n))\longrightarrow \mathcal U_q(\mathfrak{gl}(m|n))\otimes \mathcal U_q(\mathfrak{gl}(m|n))$
specified by
\begin{gather}
\Delta(\sigma)=\sigma\otimes\sigma, \qquad \Delta(K_i)=K_i\otimes K_i,\\
\Delta(E_i)=E_i\otimes \mathcal K_i+\sigma^{p(i)}\otimes E_i,\\
\Delta(F_i)=F_i\otimes 1+\sigma^{p(i)}\mathcal K_i^{-1}\otimes F_i,
\end{gather}
where $\mathcal K_i=K_iK_{i+1}^{-1}$ for $i<m$, $\mathcal K_m=K_mK_{m+1}$, and $\mathcal K_i=K_i^{-1}K_{i+1}$ for $i>m$.
The antipode $S$ is therefore given by
\begin{equation}
\begin{split}
S(\sigma)=\sigma, &\qquad
S(K_i)=K_i^{-1},\\
S(E_i)&=-\sigma^{p(i)}E_i\mathcal K_i^{-1},\\
S(F_i)&=-\sigma^{p(i)}\mathcal K_iF_i,
\end{split}
\end{equation}
and the counit by
\begin{gather}
\epsilon(\sigma)=1=\epsilon(K_i)=\epsilon(\mathcal K_i),\\
\epsilon(E_i)=0=\epsilon(F_i).
\end{gather}
\end{defi}

\begin{remark}
Similarly, we can define the bosonization $\mathcal U_q(\mathfrak{sl}(m|n))$ of the quantum special linear superalgebra
$U_q(\mathfrak{sl}(m|n))$, as well as the bosonization $u_q(\mathfrak{gl}(m|n))$, $u_q(\mathfrak{sl}(m|n))$
of the quantum restricted superalgebra $\mathfrak u_q(\mathfrak{gl}(m|n))$, $\mathfrak u_q(\mathfrak{sl}(m|n))$, respectively, in the case when $\textbf{char}(q)=
\ell>0$.
\end{remark}

In what follows, for convenience of our statement, we shall adopt the symbol $\mathcal U_q$ to denote $\mathcal U_q(\mathfrak{gl}(m|n))$,  $\mathcal U_q(\mathfrak{sl}(m|n))$ respectively, or the restricted object $u_q(\mathfrak{gl}(m|n))$, $u_q(\mathfrak{sl}(m|n))$ respectively, when $\textbf{char}(q)=\ell>0$. Correspondingly, the symbol $\Omega_q$ denotes $\Omega_q(m|n)$, or  $\Omega_q(m|n, \bold 1)$,
when $\textbf{char}(q)=\ell>0$. The purpose introducing these bosonization objects here is based on the following consideration.

For quantum superalgebra (as Hopf superalgebra) $A:=U_q$ or $\mathfrak u_q$, denote by $_A \textbf{smod}$ the category of $A$-supermodules; let $\mathcal A:=\mathcal U_q$ or $u_q$ denote the bosonization (as Hopf algebra) of $A$, and
$_{\mathcal A}\textbf{mod}$ the category of $\mathcal A$-modules. Then we are able to embed any $A$-supermodule object
 of $_A \textbf{smod}$ into an object of $_{\mathcal A}\textbf{mod}$, and reciprocally, we view any object of $_{\mathcal A}\textbf{mod}$ with the grading defined by the action of parity element $\sigma$, via restriction to $A$, as an $A$-supermodule in $_A\textbf{smod}$ (cf. \cite{Hu}, \cite{Ma}, \cite{AAH}). We prefer to treat the representation theory of $A$ instead in the category $_{\mathcal A}\textbf{mod}$.

\subsection{$\Omega_q$ as $\mathcal U_q$-module algebra structure via quantum Weyl algebra}

In subsection 3.10, we have defined the quantum Weyl algebra $\mathcal W_q(2(m|n))$ of $(m|n)$-type as the quantum
differential operator algebra $\textrm{Diff}_q(\Omega_q(m|n))$ over the quantum Grassmann superalgebra $\Omega_q(m|n)$.
We expect the quantum algebra $\mathcal U_q(\mathfrak{gl}(m|n))$ or $\mathcal U_q(\mathfrak{sl}(m|n))$ can be realized
as a subquotient object of $\mathcal W_q(2(m|n))$, just as in the Lie superalgebra level $\mathfrak {gl}(m|n)$ or $\mathfrak
{sl}(m|n)$ can be realized via certain suitable differential operators.
This program which we think well addressed Manin's questions (\cite{Ma1}) can be achieved in the type $A(m|n)$ case as follows.


\begin{theorem}\label{th5}
For any $x^{(\alpha)}\otimes x^\mu\in\Omega_q:=\Omega_q(m|n)$, or $\Omega_q(m|n,\bold 1)$ when $\textbf{char}(q)=\ell>2$, set
 \begin{gather}
E_j.(x^{(\alpha)}\otimes x^\mu)= (x_j\partial_{j+1}\sigma_{j})(x^{(\alpha)}\otimes x^\mu), \quad (j\in J)\label{eq6}\\
F_j.(x^{(\alpha)}\otimes x^\mu)= (\sigma_j^{-1}x_{j+1}\partial_j)(x^{(\alpha)}\otimes x^\mu),\quad (j\in J)\label{eq7}\\
K_i.(x^{(\alpha)}\otimes x^\mu)=\sigma_i(x^{(\alpha)}\otimes x^\mu),  \quad (i\in I_0)\\
 K_i.(x^{(\alpha)}\otimes x^\mu)=(\sigma_i\tau_i)^{-1}(x^{(\alpha)}\otimes x^\mu), \quad (i\in I_1)\\
 \mathcal K_i.(x^{(\alpha)}\otimes x^\mu)=\sigma_i\sigma_{i+1}^{-1}(x^{(\alpha)}\otimes x^\mu), \quad (m\ne i\in I_0)\\
\mathcal K_m.(x^{(\alpha)}\otimes x^\mu)=\sigma_m\sigma_{m+1}\tau_{m+1}(x^{(\alpha)}\otimes x^\mu),\\
\mathcal K_i.(x^{(\alpha)}\otimes x^\mu)=\sigma_i^{-1}\sigma_{i+1}\tau_i\tau_{i+1}(x^{(\alpha)}\otimes x^\mu), \quad (i\in I_1)\label{eq8}\\
\sigma(x^{(\alpha)}\otimes x^\mu)=\tau(x^{(\alpha)}\otimes x^\mu)=(-1)^{|\mu|}x^{(\alpha)}\otimes x^\mu.\label{eq9}
\end{gather}
Formulae (4.7)--- (4.14) define a $\mathcal U_q$-module algebra structure over the quantum $($restricted$)$ Grassmann superalgebra $\Omega_q$, where $\mathcal U_q:=\mathcal U_q(\mathfrak{gl}(m|n))$,  $\mathcal U_q(\mathfrak{sl}(m|n))$ respectively, or the restricted object $u_q(\mathfrak{gl}(m|n))$, $u_q(\mathfrak{sl}(m|n))$ respectively, when $\textbf{char}(q)=\ell>2$.
\end{theorem}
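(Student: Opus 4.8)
The plan is to prove the statement in two stages: first, that the operators introduced in \eqref{eq6}--\eqref{eq9} satisfy the defining relations $(R1)$--$(R7)$ of $U_q(\mathfrak{gl}(m|n))$ from subsection~2.3, together with the bosonization relations $\sigma^2=1$ and $\sigma\,x\,\sigma=\sigma(x)$ for the parity element; and second, that the resulting action is compatible with the multiplication of $\Omega_q$, i.e.\ that $\Omega_q$ becomes a $\mathcal U_q$-module algebra. The decisive simplification is that every operator appearing on the left of \eqref{eq6}--\eqref{eq9} is, by construction, an element of the quantum Weyl algebra $\mathcal W_q(2(m|n))$ of Definition~24, so all the products below can be reduced using the cross relations among $x_i,\partial_i,\sigma_i,\tau_i$ recorded there, together with the conjugation and derivation identities of Lemma~9.

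For the algebra relations I would proceed as follows. Relation $(R1)$ is immediate, since each $K_i$ and $\mathcal K_i$ is a product of the pairwise commuting automorphisms $\sigma_i,\tau_i$. Relation $(R2)$ is obtained by conjugating $E_j=x_j\partial_{j+1}\sigma_j$ and $F_j=\sigma_j^{-1}x_{j+1}\partial_j$ by $K_i$ and invoking $\sigma_i x_j\sigma_i^{-1}=q^{\delta_{ij}}x_j$, the formula $\sigma_j\partial_i\sigma_j^{-1}=(-1)^{\delta_{\bar1,|\partial_i|}\delta_{ij}}q^{-\delta_{ij}}\partial_i$ and $\tau_i\partial_j=(-1)^{\delta_{ij}}\partial_j\tau_i$ of Lemma~9$(3)$; the asymmetry $q_i=q$ on $I_0$ versus $q_i=q^{-1}$ on $I_1$ comes precisely from the inverse $K_i=(\sigma_i\tau_i)^{-1}$ chosen for $i\in I_1$. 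Relation $(R4)$ holds because for $|i-j|>1$ the operators are supported on disjoint indices. Relation $(R6)$, i.e.\ $E_m^2=F_m^2=0$, reduces to $\partial_{m+1}^2=0$ and to $x_{m+1}^2=0$, both valid since $m+1\in I_1$. The heart of this stage is $(R3)$: I would evaluate $E_iF_j$ and $(-1)^{p(E_i)p(F_i)}F_jE_i$ on a basis vector $x^{(\alpha)}\otimes x^\mu$ and collapse the difference using the single contact relation $\partial_i\circ x_i-q\,x_i\circ\partial_i=\sigma_i^{-1}$ for $i\in I_0$, respectively the anticommutator $\partial_i\circ x_i+x_i\circ\partial_i=1$ for $i\in I_1$, of Definition~24. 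For $i\neq j$ all terms cancel by the $\theta$-commutations; for $i=j$ the contact relation turns the difference into a diagonal operator, which one identifies with $(\mathcal K_i-\mathcal K_i^{-1})/(q_i-q_i^{-1})$ by comparing eigenvalues against \eqref{eq8} and its companions. The sign $(-1)^{p(E_i)p(F_i)}$, nontrivial only at $i=m$, arises exactly because the derivative $\partial_{m+1}$ is odd, converting the commutator into an anticommutator at the odd node. Relations $(R5)$ and $(R7)$ are the quantum Serre relations: for nodes away from $m$, $(R5)$ follows as for $\mathcal A_q(m)$ in Theorem~\ref{th1} and Proposition~\ref{pr1} using the $q$-binomial arithmetic of Lemma~1, while the relations involving the node $m$---in particular the four-term relation $(R7)$---require direct evaluation, the coefficient $q+q^{-1}$ being forced by the mixed even/odd commutations around index $m$. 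Finally the bosonization relations with $\sigma=\tau$ follow from $\tau\partial_i\tau^{-1}=(-1)^{\delta_{\bar1,|\partial_i|}}\partial_i$ of Lemma~9$(3)$, giving $\sigma E_j\sigma=(-1)^{p(j)}E_j$ and the analogous identity for $F_j$.

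For the module-algebra axiom it suffices to verify, on the algebra generators of $\mathcal U_q$, the twisted Leibniz rule dictated by the coproduct of Definition~25, for instance
\[
E_j.(ab)=(E_j.a)(\mathcal K_j.b)+(\sigma^{p(j)}.a)(E_j.b),
\]
since the elements of $\Omega_q$ satisfying a fixed such identity form a subalgebra and $\Omega_q$ is generated by the $x_i$ (together with the $x_i^{(\ell)}$ at a root of unity). This is exactly the content of Lemma~9: $\partial_i$ is a $(\Theta(-\epsilon_i)\sigma_i^{\pm1},\sigma_i^{\mp1})$-derivation for $i\in I_0$ and a $(\Theta(-\epsilon_i)\tau_i,1)$-derivation for $i\in I_1$, and pre- and post-composing with the automorphisms $x_j,\sigma_j$ turns $E_j$ and $F_j$ into skew-derivations whose defect automorphisms I would identify with $\mathcal K_j$ and $\sigma^{p(j)}$ by comparing their action on monomials, given in \eqref{eq8} and \eqref{eq9}, with the $\Theta$- and $\tau$-factors produced by Lemma~9$(2)$ and $(5)$. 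The grading-preserving generators $K_i,\mathcal K_i,\sigma$ act as algebra automorphisms of $\Omega_q$, so for them the module-algebra condition $h.(ab)=(h.a)(h.b)$ is automatic.

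The main obstacle lies in the first stage, at the odd node: relation $(R3)$ for $i=j=m$ and the higher Serre relation $(R7)$. There the even $q$-derivative on $\mathcal A_q(m)$ and the odd exterior derivative on $\Lambda_{q^{-1}}(n)$ genuinely interact, and one must keep exact track of the signs contributed by the bicharacter $\theta$ and by the parity automorphism $\tau$ entering through $\mathcal K_m=\sigma_m\sigma_{m+1}\tau_{m+1}$, in order to see the commutator become an anticommutator and the four-term Serre expression vanish.
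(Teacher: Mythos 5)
Your two-stage architecture (defining relations of $\mathcal U_q$ plus the module-algebra axiom) is the same decomposition the paper uses, only in the opposite order, and your first stage is essentially the paper's part (II): quote \cite{Hu} for the even block and evaluate directly at and beyond the node $m$. The genuine gap is in your second stage. You assert that the module-algebra condition ``is exactly the content of Lemma~9,'' the defect automorphisms of the skew-derivations $E_j$, $F_j$ being ``identified with $\mathcal K_j$ and $\sigma^{p(j)}$ by comparing their action on monomials.'' That identification is impossible as soon as odd indices enter, i.e.\ for every $j\geq m$ (it does work for $j<m$, where Lemma~9(2) gives $\Theta(\epsilon_j{-}\epsilon_{j+1})\sigma_j\sigma_{j+1}=\mathrm{id}$). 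Concretely, combining Lemma~9(5) (applied to $x_m\partial_{m+1}$) with Lemma~9(2) yields
\[
E_m.(uv)=(E_m.u)\,\sigma_m(v)+\bigl(\sigma_{m+1}^{-1}\tau_{m+1}\tau\bigr)(u)\,(E_m.v),
\]
whereas the structure demanded by $\Delta(E_m)=E_m\otimes\mathcal K_m+\sigma\otimes E_m$ of Definition~25 together with (4.12) and (4.14) is
\[
E_m.(uv)=(E_m.u)\,\bigl(\sigma_m\sigma_{m+1}\tau_{m+1}\bigr)(v)+\tau(u)\,(E_m.v).
\]
The twisting operators are genuinely different: on $v=1\otimes x_{m+1}$ one has $\sigma_m(v)=v$ while $\mathcal K_m(v)=qv$, so no comparison on monomials can identify them. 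Both displayed identities are in fact true, but only because they differ exclusively on pairs $(u,v)$ whose exterior parts both contain $x_{m+1}$; there $uv=0$, and each identity separately asserts that two nonzero terms cancel, with different scalars in the two identities. Deducing the required identity from the Lemma~9 one therefore needs an extra verification precisely in these degenerate cases, namely that $E_m.(x^\mu)\,\mathcal K_m.(x^\nu)+\tau(x^\mu)\,E_m.(x^\nu)=0$ whenever $x^\mu x^\nu=0$, and the analogue for $F_j$ with $\mathcal K_j^{-1}$. This ``special case'' computation is exactly what the bulk of the paper's part (I) consists of, and is what the paper says forces the particular operator choices (4.12)--(4.13) for the $\mathcal K_i$; your proposal contains no step that supplies it. Relatedly, your closing remark locates the main difficulty at $(R3)$ and $(R7)$, but those are routine evaluations; the real subtlety of this theorem sits in the module-algebra axiom at the odd node, which you have delegated to a lemma that does not quite deliver it.

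A smaller caveat on your reduction to generators: the set $S=\{a\in\Omega_q:\ E_j.(ab)=(E_j.a)(\mathcal K_j.b)+(\sigma^{p(j)}.a)(E_j.b)\ \text{for all }b\in\Omega_q\}$ is indeed a subalgebra (this uses that $\mathcal K_j$ and $\sigma^{p(j)}$ act as algebra automorphisms), so it suffices to check the Leibniz rule for pairs (generator, \emph{arbitrary} element). It does not suffice to check it on pairs of generators: the set $\{b:\ E_j.(x b)=\cdots\}$ for a fixed generator $x$ is not obviously multiplicatively closed, since expanding $E_j.(x(bb'))=E_j.((xb)b')$ requires the identity for the non-generator first argument $xb$, and a naive double induction is circular. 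Either formulate the reduction one-sidedly, or prove the identity for all pairs at once (as Lemma~9 in principle permits, modulo the gap above).
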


\begin{proof} All the proof will be carried out for $\mathcal U_q(\mathfrak{gl}(m|n))$ or $\mathcal U_q(\mathfrak{sl}(m|n))$, the others are similar.

(I) First of all, we need to check the multiplication of $\Omega_q$ is a $\mathcal U_q$-module homomorphism. Since
$K_i$'s acts on $\Omega_q$ as automorphisms, it suffices to check that $E_i$ \& $F_i$ act on any product of two elements from
$\Omega_q$ as rule (4.2) or (4.3).

\smallskip
(1) For $i<m$: $E_i=x_i\partial_{i+1}\sigma_i$, $F_i=\sigma_i^{-1}x_{i+1}\partial_i$. Now $\forall\ x^{(\alpha)}\otimes x^\mu\in\Omega_q$, where $x^{(\alpha)}\in \mathcal A_q(m)$ or $\mathcal A_q(m,\bold1)$ when $\textbf{char}(q)=\ell>0$, $\mu=(\mu_1,\cdots,\mu_n)\in\mathbb Z_2^n$, note that
\begin{align*}
E_i.(x^{(\alpha)}\otimes x^\mu)&=[\,\alpha_i{+}1\,]\,x^{(\alpha+\epsilon_i-\epsilon_{i+1})}\otimes x^{\mu},\\
F_i.(x^{(\alpha)}\otimes x^\mu)&=[\,\alpha_{i+1}{+}1\,]\,x^{(\alpha-\epsilon_i+\epsilon_{i+1})}\otimes x^{\mu}.
\end{align*}
This is exactly reduced to the known case of $U_q(\mathfrak {gl}(m))$ acting on $\mathcal A_q$, which is true due to Theorem 3
(or see \cite{Hu}), where we used $q^{\mu*(\beta+\epsilon_i-\epsilon_{i+1})}=q^{\mu*\beta}$ for $i<m$ in the product of two elements of $\Omega_q$.

\smallskip
(2) For $i=m$: $E_m=x_m\partial_{m+1}\sigma_m$, $\mathcal K_m=\sigma_m\sigma_{m+1}\tau_{m+1}$, $\forall\ x^{(\alpha)}\otimes x^\mu\in\Omega_q$, where $\mu=(\mu_1,\cdots,\mu_n)\in\mathbb Z_2^n$, noting that $E_m(x^{(\alpha)}\otimes x^\mu)=\delta_{1,\mu_1}[\,\alpha_m{+}1\,]\,x^{(\alpha+\epsilon_m)}\otimes x^{\mu-\epsilon_{m+1}}$, we get
\begin{align*}
E_m&.(x^{(\alpha)}\otimes x^\mu)\mathcal K_m.(x^{(\beta)}\otimes x^\nu)+\sigma(x^{(\alpha)}\otimes x^\mu)E_m.(x^{(\beta)}\otimes x^\nu)\\
&=\delta_{1,\mu_1}q^{(\mu-\epsilon_{m+1})*\beta+\beta_m}q^{\delta_{1,\nu_1}}[\alpha_m{+}1]\,x^{(\alpha+\epsilon_m)}x^{(\beta)}\otimes x^{\mu-\epsilon_{m+1}}x^\nu
\\
&\quad+(-1)^{|\mu|}\delta_{1,\nu_1}q^{\mu*(\beta+\epsilon_m)}[\beta_m{+}1]\,x^{(\alpha)}x^{(\beta+\epsilon_m)}\otimes x^{\mu}x^{\nu-\epsilon_{m+1}}
\end{align*}
\begin{align*}
&=\delta_{1,\mu_1}q^{-\epsilon_{m+1}*\beta+\mu*\beta+\beta_m+\delta_{1,\nu_1}}(-q)^{\mu*\nu}q^{(\alpha+\epsilon_m)*\beta}
[\alpha_m{+}1]{\alpha{+}\beta{+}\epsilon_m\brack \alpha{+}\epsilon_m}\,x^{(\alpha+\beta+\epsilon_m)}{\otimes}
x^{(\mu-\epsilon_{m+1})+\nu}\\
&\quad+(-1)^{|\mu|}\delta_{1,\nu_1}q^{\mu*(\beta+\epsilon_m)}(-q)^{\mu*(\nu-\epsilon_{m+1})}
[\beta_m{+}1]q^{\alpha*(\beta{+}\epsilon_m)}{\alpha{+}\beta{+}\epsilon_m\brack \beta{+}\epsilon_m}\,x^{(\alpha+\beta+\epsilon_m)}\otimes x^{\mu+(\nu-\epsilon_{m+1})}\\
&=\bigl(\delta_{1,\mu_1}q^{\delta_{1,\nu_1}}+\delta_{1,\nu_1}(-q)^{\delta_{1,\mu_1}}\bigr)q^{\alpha*\beta+\mu*\beta}
(-q)^{\mu*\nu}[\alpha_m{+}\beta_m{+}1]{\alpha{+}\beta\brack \alpha}\,x^{(\alpha+\beta+\epsilon_m)}{\otimes} x^{\mu+\nu-\epsilon_{m+1}}\\
&=\bigl(\delta_{1,\mu_1}q^{\delta_{1,\nu_1}}+\delta_{1,\nu_1}(-q)^{\delta_{1,\mu_1}}\bigr)q^{\alpha*\beta+\mu*\beta}
(-q)^{\mu*\nu}{\alpha{+}\beta\brack \alpha}\,x_m\partial_{m+1}\sigma_m(x^{(\alpha+\beta)}\otimes x^{\mu+\nu})\\
&=\delta_{1,\mu_1+\nu_1}\,E_m.\Bigl((x^{(\alpha)}\otimes x^{\mu})(x^{(\beta)}\otimes x^{\nu})\Bigr)=E_m.\Bigl((x^{(\alpha)}\otimes x^{\mu})(x^{(\beta)}\otimes x^{\nu})\Bigr),
\end{align*}
where we used $(\epsilon_m-\epsilon_{m+1})*\beta=-\beta_m$ and $\mu*(\epsilon_m-\epsilon_{m+1})=|\mu|-(|\mu|-\delta_{1,\mu_1})=\delta_{1,\mu_1}$.

For $F_m=\sigma_m^{-1}x_{m+1}\partial_m$: $\forall\ x^{(\alpha)}\otimes x^\mu, \ x^{(\beta)}\otimes x^\nu\in\Omega_q$,
noting that $\sigma_m^{-1}x_{m+1}\partial_m(x^{(\alpha)}\otimes x^\mu)=\delta_{0,\mu_1}\,x^{(\alpha-\epsilon_m)}\otimes x^{\mu+\epsilon_{m+1}}$, and $\mathcal K_m^{-1}.(x^\mu)=\sigma_m^{-1}\sigma_{m+1}^{-1}\tau_{m+1}(x^\mu)=q^{-\delta_{1,\mu_1}} x^\mu$,
we have
\begin{align*}
&F_m.(x^{(\alpha)}\otimes x^\mu)(x^{(\beta)}\otimes x^\nu)+\sigma \mathcal K_m^{-1}.(x^{(\alpha)}\otimes x^\mu)F_m.(x^{(\beta)}\otimes x^\nu)\\
&\quad=\delta_{0,\mu_1}q^{(\mu+\epsilon_{m+1})*\beta}\,x^{(\alpha-\epsilon_m)}x^{(\beta)}\otimes x^{\mu+\epsilon_{m+1}}x^\nu\\
&\qquad+(-1)^{|\mu|}\delta_{0,\nu_1}q^{-\alpha_m+\mu*(\beta-\epsilon_m)}q^{-\delta_{1,\mu_1}}\,
x^{(\alpha)}x^{(\beta-\epsilon_m)}{\otimes} x^{\mu}x^{\nu+\epsilon_{m+1}}\\
&\quad=\delta_{0,\mu_1+\nu_1}q^{\mu*\beta+\alpha*\beta+(\epsilon_{m+1}-\epsilon_m)*\beta}(-q)^{(\mu+\epsilon_{m+1})*\nu}
{\alpha{+}\beta{-}\epsilon_m\brack \alpha{-}\epsilon_m}\,x^{(\alpha+\beta-\epsilon_m)}{\otimes}
x^{\mu+\nu+\epsilon_{m+1}}\\
&\qquad+\delta_{0,\mu_1+\nu_1}(-1)^{|\mu|}q^{\mu*\beta-|\mu|+\alpha*(\beta-\epsilon_m)}(-q)^{\mu*(\nu+\epsilon_{m+1})}
q^{-\alpha_m}{\alpha{+}\beta{-}\epsilon_m\brack \beta{-}\epsilon_m}\,x^{(\alpha+\beta-\epsilon_m)}{\otimes} x^{\mu+\nu+\epsilon_{m+1}}\\
&\quad=\delta_{0,\mu_1+\nu_1}q^{\mu*\beta+\alpha*\beta}(-q)^{\mu*\nu}\left(q^{\beta_m}{\alpha{+}\beta{-}\epsilon_m\brack \alpha{-}\epsilon_m}+q^{-\alpha_m}{\alpha{+}\beta{-}\epsilon_m\brack \beta{-}\epsilon_m}\right)\,x^{(\alpha+\beta+\epsilon_m)}{\otimes} x^{\mu+\nu-\epsilon_{m+1}}\\
&\quad=q^{\alpha*\beta+\mu*\beta}(-q)^{\mu*\nu}{\alpha{+}\beta\brack \alpha}\,F_m.(x^{(\alpha+\beta)}\otimes x^{\mu+\nu})=F_m.\Bigl((x^{(\alpha)}\otimes x^{\mu})(x^{(\beta)}\otimes x^{\nu})\Bigr),
\end{align*}
where we used $\epsilon_{m+1}*\nu=0$, $\alpha*\epsilon_m=0$, and $\mu*\epsilon_{m+1}=|\mu|$ for $\mu_1=0$ (which implies $(-1)^{|\mu|}q^{-|\mu|}(-q)^{\mu*\epsilon_{m+1}}=1$), as well as $q^{\beta_m}[\alpha_m]+q^{-\alpha_m}[\beta_m]=[\alpha_m{+}\beta_m]$.

\smallskip
(3) For $i\in I_1$, $E_i=x_i\partial_{i+1}\sigma_{i}$: Viewing $\mu=(\mu_1,{\cdots},\mu_n)\in\mathbb Z_2^n$ as $\mu=(0,{\cdots},0,\mu_1,{\cdots},\mu_n)$ $\in\mathbb Z^m\times\mathbb Z_2^n$, and
putting
$\mu{+}\epsilon_i{-}\epsilon_{i+1}=(0,\cdots,0,\mu_1,\cdots,\mu_{i-m}{+}1,\mu_{i+1-m}{-}1,\cdots,\mu_n)$, we have
$(\epsilon_i-\epsilon_{i+1})*\mu=-\mu_{i-m}$, and $\mu*(\epsilon_i-\epsilon_{i+1})=\mu_{i+1-m}$.
Therefore,
 $\forall\ x^{(\alpha)}\otimes x^\mu$,
we get
\begin{align*}
E_i.(x^{(\alpha)}{\otimes} x^\mu)&=x_i\partial_{i+1}\sigma_{i}(x^{(\alpha)}{\otimes} x^\mu)=\delta_{0,\mu_{i-m}}\delta_{1,\mu_{i+1-m}}(-q)^{(\epsilon_{i}-\epsilon_{i+1})*\mu}\,x^{(\alpha)}{\otimes} x^{\mu+\epsilon_i-\epsilon_{i+1}}\\
&=\delta_{0,\mu_{i-m}}\delta_{1,\mu_{i+1-m}}\,x^{(\alpha)}\otimes x^{\mu+\epsilon_i-\epsilon_{i+1}}=
x^{(\alpha)}\otimes E_i.(x^\mu).
\end{align*}
So, in the case when $\forall\ x^\mu,\ x^\nu\in \Lambda_{q^{-1}}(n)$ with $x^\mu x^\nu\ne 0$ (i.e., $\mu_j{+}\nu_j\in \mathbb Z_2$, $j\in I_1$), we have
\begin{align*}
E_i&.(x^\mu x^\nu)=(-q)^{\mu*\nu}E_i.(x^{\mu+\nu})
=(-q)^{\mu*\nu}\delta_{0,\mu_{i-m}+\nu_{i-m}}\delta_{1,\mu_{i+1-m}+\nu_{i+1-m}}
x^{\mu+\nu+\epsilon_i-\epsilon_{i+1}}\\
&=(-q)^{\mu*\nu}\delta_{0,\mu_{i-m}+\nu_{i-m}}(\delta_{1,\mu_{i+1-m}}\delta_{0,\nu_{i+1-m}}
+\delta_{0,\mu_{i+1-m}}\delta_{1,\nu_{i+1-m}})x^{\mu+\nu+\epsilon_i-\epsilon_{i+1}}\\
&=(-q)^{\mu*\nu}\delta_{0,\mu_{i-m}}\delta_{0,\nu_{i-m}}\delta_{1,\mu_{i+1-m}}\delta_{0,\nu_{i+1-m}}
x^{(\mu+\epsilon_i-\epsilon_{i+1})+\nu}\\
&\quad+(-q)^{\mu*\nu}\delta_{0,\mu_{i-m}}\delta_{0,\nu_{i-m}}\delta_{0,\mu_{i+1-m}}\delta_{1,\nu_{i+1-m}}
x^{\mu+(\nu+\epsilon_i-\epsilon_{i+1})}\\
&=(-q)^{(\mu+\epsilon_i-\epsilon_{i+1})*\nu}\delta_{0,\mu_{i-m}}\delta_{0,\nu_{i-m}}\delta_{1,\mu_{i+1-m}}\delta_{0,\nu_{i+1-m}}
x^{(\mu+\epsilon_i-\epsilon_{i+1})+\nu}\\
&\quad+(-q)^{\mu*(\nu+\epsilon_i-\epsilon_{i+1})}\delta_{0,\mu_{i-m}}\delta_{0,\nu_{i-m}}\delta_{0,\mu_{i+1-m}}\delta_{1,\nu_{i+1-m}}
x^{\mu+(\nu+\epsilon_i-\epsilon_{i+1})}\\
&=(\delta_{0,\mu_{i-m}}\delta_{1,\mu_{i+1-m}}
x^{\mu+\epsilon_i-\epsilon_{i+1}})(\delta_{0,\nu_{i-m}}\delta_{0,\nu_{i+1-m}}x^\nu)\\
&\quad+\delta_{0,\mu_{i-m}}\delta_{0,\mu_{i+1-m}}
x^{\mu}(\delta_{0,\nu_{i-m}}\delta_{1,\nu_{i+1-m}}x^{\nu+\epsilon_i-\epsilon_{i+1}})\\
&=E_i.(x^{\mu})(\delta_{0,\nu_{i-m}}\delta_{0,\nu_{i+1-m}}x^\nu)+x^{\mu}E_i.(x^{\nu})\\
&=E_i.(x^\mu)\mathcal K_i.(x^\nu)+x^\mu E_i.(x^\nu).
\end{align*}

However, there exists only one special case: $\exists$ $i\in I_1$ such that $\mu_{i-m}=0=\nu_{i-m}$, and $\mu_{i+1-m}=1=\nu_{i+1-m}$, i.e.,
$x^\mu x^\nu=0$, but $E_i(x^\mu)x^\nu\ne 0$, $x^\mu E_i(x^\nu)\ne 0$, we have to check that the following identity still holds,
 which facilitates us to fix our unique options so well for  $E_i$'s (4.7) \& $\mathcal K_i$'s (4.12) operator expression in $\mathcal W_q(2(m|n))$ when $i\in I_1$. \begin{align*}
E_i.&(x^\mu)\mathcal K_i.(x^\nu)+x^\mu E_i.(x^\nu)\\
&=\delta_{0,\nu_{i-m}}\delta_{1,\nu_{i+1-m}}\delta_{0,\mu_{i-m}}\delta_{1,\mu_{i+1-m}}
x^{\mu+\epsilon_i-\epsilon_{i+1}}(q^{\nu_{i+1-m}}x^\nu) \\
&\quad+\delta_{0,\mu_{i-m}}\delta_{1,\mu_{i+1-m}}\delta_{0,\nu_{i-m}}\delta_{1,\nu_{i+1-m}}
x^{\mu}x^{\nu+\epsilon_i-\epsilon_{i+1}}\\
&=q^{\nu_{i+1-m}}(-q)^{(\mu+\epsilon_i-\epsilon_{i+1})*\nu}
x^{\mu+\nu+\epsilon_i-\epsilon_{i+1}}\quad (\textit{noting } (\epsilon_i-\epsilon_{i+1})*\nu=-\nu_{i-m}=0)\\
&\quad+(-q)^{\mu*(\nu+\epsilon_i-\epsilon_{i+1})}x^{\mu+\nu+\epsilon_i-\epsilon_{i+1}}\quad (\textit{noting } \mu*(\epsilon_i-\epsilon_{i+1})=\mu_{i+1-m})\\
&=(-q)^{\mu*\nu}\Bigl(q^{\nu_{i+1-m}}+(-q)^{\mu_{i+1-m}}\Bigr)x^{\mu+\nu+\epsilon_i-\epsilon_{i+1}}\\
&=0=E_i.(x^\mu x^\nu).
\end{align*}

Hence,  $\forall\ x^{(\alpha)}\otimes x^\mu$, $x^{(\beta)}\otimes x^\nu\in\Omega_q$, noting that
 $(\mu{+}\epsilon_i{-}\epsilon_{i+1})*\beta=\mu*\beta$, we obtain the expected action rule.
\begin{align*}
E_i.&\bigl((x^{(\alpha)}\otimes x^\mu)(x^{(\beta)}\otimes x^\nu)\bigr)
=q^{\mu*\beta}x^{(\alpha)}x^{(\beta)}\otimes E_i.(x^\mu x^\nu)\\
&=q^{\mu*\beta}x^{(\alpha)}x^{(\beta)}\otimes \Bigl(E_i.(x^{(\mu)})\mathcal K_i.(x^{(\nu)})+x^{(\mu)}E_i.(x^{(\nu)})\Bigr)\\
&=(x^{(\alpha)}\otimes E_i.(x^\mu))(x^{(\beta)}\otimes \mathcal K_i.(x^{(\nu)}))+(x^{(\alpha)}\otimes x^\mu)(x^{(\beta)}\otimes E_i.(x^\nu))\\
&=E_i.(x^{(\alpha)}\otimes x^\mu)\mathcal K_i.(x^{(\beta)}\otimes x^\nu)+(x^{(\alpha)}\otimes x^\mu)E_i.(x^{(\beta)}\otimes x^\nu).
\end{align*}

\smallskip
For $i\in I_1$, $F_i=\sigma_i^{-1}x_{i+1}\partial_i$: Noting that
$$
F_i.(x^{(\alpha)}\otimes x^\mu)
=\delta_{1,\mu_{i-m}}\delta_{0,\mu_{i+1-m}}x^{(\alpha)}\otimes x^{\mu-\epsilon_i+\epsilon_{i+1}}=x^{(\alpha)}\otimes F_i.(x^{\mu}),
$$
so, for any $x^\mu,\ x^\nu\in\Lambda_{q^{-1}}$ with $x^\mu x^\nu\ne 0$ (i.e., $\mu_j{+}\nu_j\in \mathbb Z_2$, $j\in I_1$), we have
\begin{align*}
F_i.(x^\mu x^\nu)&=(-q)^{\mu*\nu}F_i.(x^{\mu+\nu})=\delta_{1,\mu_{i-m}+\nu_{i-m}}\delta_{0,\mu_{i+1-m}+\nu_{i+1-m}}
x^{\mu+\nu-\epsilon_i+\epsilon_{i+1}}\\
&=(-q)^{\mu*\nu}(\delta_{1,\mu_{i-m}}\delta_{0,\nu_{i-m}}+\delta_{0,\mu_{i-m}}\delta_{1,\nu_{i-m}})\delta_{0,\mu_{i+1-m}}\delta_{0,\nu_{i+1-m}}
x^{\mu+\nu-\epsilon_i+\epsilon_{i+1}}\\
&=(-q)^{\mu*\nu+\nu_{i-m}}\delta_{1,\mu_{i-m}}\delta_{0,\mu_{i+1-m}}\delta_{0,\nu_{i-m}}\delta_{0,\nu_{i+1-m}}
x^{(\mu-\epsilon_i+\epsilon_{i+1})+\nu}\\
&\quad+(-q)^{\mu*\nu+\mu_{i+1-m}}\delta_{0,\mu_{i-m}}\delta_{0,\mu_{i+1-m}}\delta_{1,\nu_{i-m}}\delta_{0,\nu_{i+1-m}}
x^{\mu+(\nu-\epsilon_i+\epsilon_{i+1})}\\
&=\delta_{0,\nu_{i-m}}\delta_{0,\nu_{i+1-m}}F_i.(x^\mu)x^\nu+\delta_{0,\mu_{i-m}}\delta_{0,\mu_{i+1-m}}
\mathcal K_i^{-1}.(x^{\mu}) F_i.(x^\nu)\\
&=F_i.(x^\mu)x^\nu+\mathcal K_i^{-1}.(x^{\mu}) F_i.(x^\nu).
\end{align*}

But there exists only one special case: $\exists$ $i\in I_1$ such that $\mu_{i-m}=1=\nu_{i-m}$, and $\mu_{i+1-m}=0=\nu_{i+1-m}$, that is $x^\mu x^\nu=0$ but $F_i(x^\mu)x^\nu\ne 0$ and $x^\mu F_i(x^\nu)\ne 0$, we have to check that the following identity still holds, which coincides with our unique options for $F_i$'s (4.7) \& $\mathcal K_i$'s (4.13) operator expression in $\mathcal W_q(2(m|n))$ when $i\in I_1$.
\begin{align*}
F_i.&(x^\mu)x^\nu+\mathcal K_i^{-1}.(x^\mu) F_i.(x^\nu)\\
&=\delta_{1,\mu_{i-m}}\delta_{1,\nu_{i-m}}\delta_{0,\mu_{i+1-m}}\delta_{0,\nu_{i+1-m}}
\,x^{\mu-\epsilon_i+\epsilon_{i+1}}x^\nu\\
&\quad+\delta_{1,\mu_{i-m}}\delta_{1,\nu_{i-m}}\delta_{0,\mu_{i+1-m}}\delta_{0,\nu_{i+1-m}}
(q^{\mu_{i-m}}x^{\mu})x^{\nu-\epsilon_i+\epsilon_{i+1}}\\
&=(-q)^{(\mu-\epsilon_i+\epsilon_{i+1})*\nu}
x^{\mu+\nu-\epsilon_i+\epsilon_{i+1}}+q^{\mu_{i-m}}(-q)^{\mu*(\nu-\epsilon_i+\epsilon_{i+1})}x^{\mu+\nu-\epsilon_i+\epsilon_{i+1}}\\
&=\Bigl((-q)^{\nu_{i-m}+\mu*\nu}+q^{\mu_{i-m}}(-q)^{\mu*\nu}\Bigr)x^{\mu+\nu-\epsilon_i+\epsilon_{i+1}}\\
&=0=F_i.(x^\mu x^\nu).
\end{align*}

Similar to the above argument, we easily achieve the desired action rule
$$F_i.((x^{(\alpha)}\otimes x^\mu)(x^{(\beta)}\otimes x^\nu))=F_i.(x^{(\alpha)}\otimes x^\mu)(x^{(\beta)}\otimes x^\nu)+
\mathcal K_i^{-1}.(x^{(\alpha)}\otimes x^\mu)F_i.(x^{(\beta)}\otimes x^\nu).$$

This completes the proof of step (I).

\smallskip
(II) Secondly, we need to verify that formulae (4.7)---(4.14)
satisfy all defining relations $(R1)$--$(R7)$ of $\mathcal U_q$.
It is sufficient to check these on a monomial $\Bbbk$-basis of $\Omega_q$.

From the fact that $U_q(\mathfrak{gl(m)})$ can be embedded into $\mathcal U_q(\mathfrak{gl}(m|n))$ as a subalgebra (see Definition 25)
and Theorem \ref{th1} (for details, see \cite{Hu}), we see that the actions of $E_j,\;F_j$ $(j< m)$ and
$K_i$ or, $\mathcal K_i$ $(i\in I_0)$ on $\Omega_q$ coincides with their actions on $\mathcal A_q$, the quantum (restricted)
divided power algebra, which have been examined in \cite{Hu} to satisfy the
relations: $(R1)$--$(R5)$.

Observing that relations $(R1)$--$(R2)$ are clear, in what follows, we need to check the remainders.

$(R3)$: we consider index $i\in J$ in three situations $i<m$, $i=m$ and $i>m$.

$(0)$\quad When $i<m$, $j<m$, this is true due to Theorem \ref{th1}.

$(1)$\quad When $i=m$, $j<m$, we have $i\neq j$, $q_i=q$, and $p(F_j)=0$ which implies that $(R3)$ becomes
$E_mF_j-F_jE_m=0$. Noting that
  \begin{align*}
E_mF_j.(x^{(\beta)}\otimes x^\mu)&=E_m.([\beta_{j+1}{+}1]\,x^{(\beta-\epsilon_j+\epsilon_{j+1})}\otimes x^\mu)\\
&=\delta_{1,\mu_1}[\beta_{j+1}{+}1][\beta_m{+}\delta_{m,j+1}{+}1]\,
x^{(\beta-\epsilon_j+\epsilon_{j+1}+\epsilon_m)}\otimes x^{\mu-\epsilon_{m+1}},\\
F_jE_m.(x^{(\beta)}\otimes x^\mu)&=F_j.(\delta_{m+1,i_1}[\beta_m+1]\,x^{(\beta+\epsilon_m)}\otimes x^{\mu-\epsilon_{m+1}}\\
&=\delta_{1,\mu_1}[\beta_m{+}1][\beta_{j+1}{+}\delta_{m,j+1}{+}1]\,
x^{(\beta-\epsilon_j+\epsilon_{j+1}+\epsilon_m)}\otimes x^{\mu-\epsilon_{m+1}},
\end{align*}
we obtain the required result in both cases $j+1=m$, \& $j+1\neq m$, and thus $(R3)$ holds in this case.
Similarly, we can check $E_iF_m=F_mE_i$ for $i<m$.

$(2)$\quad When $i=j=m$, we have $q_i=q$, $p(E_m)=p(F_m)=1$ and thus $(R3)$ turns to
  \begin{equation}\label{c1}
  E_mF_m+F_mE_m=\frac{\mathcal K_m-\mathcal K_m^{-1}}{q-q^{-1}}.
  \end{equation}

Note that $F_m=\sigma_m^{-1}x_{m+1}\partial_m$ and $E_m=x_m\partial_{m+1}\sigma_m$, so
\begin{gather*}
F_m.(x^{(\beta)}\otimes x^\mu)=\delta_{0,\mu_1}x^{(\beta-\epsilon_m)}\otimes x^{\mu+\epsilon_{m+1}},\\
  E_m.(x^{(\beta)}\otimes x^\mu)=\delta_{1,\mu_1}[\beta_m{+}1]\,x^{(\beta+\epsilon_m)}\otimes x^{\mu-\epsilon_{m+1}}.
   \end{gather*}
   Then we have
\begin{align*}
E_mF_m.(x^{(\beta)}\otimes x^\mu)&=\delta_{0,\mu_1}E_m.(x^{(\beta-\epsilon_m)}\otimes x^{\mu+\epsilon_{m+1}})=\delta_{0,\mu_1}[\beta_m]x^{(\beta)}\otimes x^\mu,\\
F_mE_m.(x^{(\beta)}\otimes x^\mu)&=\delta_{1,\mu_1}[\beta_m{+}1]F_m.(x^{(\beta+\epsilon_m)}\otimes x^{\mu-\epsilon_{m+1}})=\delta_{1,\mu_1}[\beta_m{+}1]x^{(\beta)}\otimes x^\mu.
\end{align*}
Adding them, we get
\begin{align*}\label{c2}
\bigl(E_mF_m+F_mE_m\bigr).(x^{(\beta)}\otimes x^\mu)&=(\delta_{0,\mu_1}[\beta_m]+\delta_{1,\mu_1}[\beta_m{+}1])x^{(\beta)}\otimes x^\mu\\
&=[\beta_m{+}\delta_{1,\mu_1}]\,x^{(\beta)}\otimes x^\mu=
\Big(\frac{\mathcal K_m-\mathcal K_m^{-1}}{q-q^{-1}}\Big)(x^{(\beta)}\otimes x^\mu),
\end{align*}
where we used $\mathcal K_m=\sigma_m\sigma_{m+1}\tau_{m+1}$.

$(3)$\quad When $i=m$, $j>m$, then we have $i\ne j,\ q_i=q,\ p(F_j)=0$, thus $(R3)$ becomes $E_mF_j-F_jE_m=0$. Note $F_j=\sigma_j^{-1}x_{j+1}\partial_j$, $F_j(x^{(\beta)}{\otimes} x^\mu)=\delta_{1,\mu_{j-m}}\delta_{0,\mu_{j+1-m}}x^{(\beta)}{\otimes} x^{\mu-\epsilon_j+\epsilon_{j+1}}$. Then we have
\begin{align*}
E_mF_j.&(x^{(\beta)}\otimes x^\mu)=E_m.(\delta_{1,\mu_{j-m}}\delta_{0,\mu_{j+1-m}}x^{(\beta)}{\otimes} x^{\mu-\epsilon_j+\epsilon_{j+1}})\\
&=\delta_{1,\mu_{j-m}}\delta_{0,\mu_{j+1-m}}\delta_{1,\mu_1-\delta_{j-m,1}}[\beta_m{+}1]\,x^{(\beta+\epsilon_m)}{\otimes} x^{\mu-\epsilon_{m+1}-\epsilon_j+\epsilon_{j+1}},\\
F_jE_m.&(x^{(\beta)}\otimes x^\mu)=F_j.(\delta_{1,\mu_1}[\beta_m{+}1]x^{(\beta+\epsilon_m)}\otimes x^{\mu-\epsilon_{m+1}})\\
&=\delta_{1,\mu_1}[\beta_m{+}1]\delta_{1,\mu_{j-m}-\delta_{j-m,1}}\delta_{0,\mu_{j+1-m}}
x^{(\beta+\epsilon_m)}\otimes x^{\mu-\epsilon_{m+1}-\epsilon_j+\epsilon_{j+1}}.
\end{align*}
Clearly, $E_mF_j=F_jE_m$ for $j>m$. Similarly, we can check $E_iF_m=F_mE_i$ for $i>m$.

$(4)$\quad When $i,\; j>m$, we have $q_i=q^{-1}$ and $p(F_j)=p(E_i)=0$, and need to check the following form of $(R3)$
  \begin{equation}\label{c4}
E_iF_j-F_jE_i=\delta_{ij}\frac{\mathcal K_i-\mathcal K_i^{-1}}{q^{-1}-q}.
  \end{equation}
Indeed, noting that $E_i=x_i\partial_{i+1}\sigma_i$, $F_i=\sigma_i^{-1}x_{i+1}\partial_i$,
$\mathcal K_i=\sigma_i^{-1}\sigma_{i+1}\tau_i\tau_{i+1}$, and
\begin{gather*}
E_i.(x^{(\alpha)}\otimes x^\mu)=\delta_{0,\mu_{i-m}}\delta_{1,\mu_{i+1-m}}
x^{(\alpha)}\otimes x^{\mu+\epsilon_i-\epsilon_{i+1}}=x^{(\alpha)}\otimes x^{\mu+\epsilon_i-\epsilon_{i+1}},\\
F_i.(x^{(\alpha)}\otimes x^\mu)=
\delta_{1,\mu_{i-m}}\delta_{0,\mu_{i+1-m}}x^{(\alpha)}{\otimes} x^{\mu-\epsilon_i+\epsilon_{i+1}}=
x^{(\alpha)}{\otimes} x^{\mu-\epsilon_i+\epsilon_{i+1}},
\end{gather*}
when $i=j$, we have
\begin{align*}
\bigl(E_iF_i-F_iE_i\bigr).(x^{(\alpha)}\otimes x^\mu)
&=\bigl(\delta_{1,\mu_{i-m}}\delta_{0,\mu_{i+1-m}}-\delta_{0,\mu_{i-m}}\delta_{1,\mu_{i+1-m}}\bigr)
x^{(\alpha)}\otimes x^{\mu}\\
&=\frac{q^{-(\mu,\epsilon_{i-m}-\epsilon_{i+1-m})}-
q^{(\mu,\epsilon_{i-m}-\epsilon_{i+1-m})}}{q^{-1}-q}x^{(\alpha)}\otimes x^\mu\\
&=\frac{\sigma_i^{-1}\sigma_{i+1}\tau_i\tau_{i+1}-\sigma_i\sigma_{i+1}^{-1}\tau_i\tau_{i+1}}{q^{-1}-q}.(x^{(\alpha)}\otimes x^\mu)\\&
=\frac{\mathcal K_i-\mathcal K_i^{-1}}{q^{-1}-q}.(x^{(\alpha)}\otimes x^\mu);
\end{align*}
when $i=j-1$, we have
\begin{align*}
\bigl(E_iF_j-F_jE_i\bigr).(x^{(\alpha)}\otimes x^\mu)
&=E_i.(x^{(\alpha)}\otimes x^{\mu-\epsilon_j+\epsilon_{j+1}})
-F_j.(x^{(\alpha)}\otimes x^{\mu+\epsilon_i-\epsilon_{i+1}})\\
&=x^{(\alpha)}\otimes x^{\mu+\epsilon_i-2\epsilon_j+\epsilon_{j+1}}-x^{(\alpha)}\otimes x^{\mu+\epsilon_i-2\epsilon_{i+1}+
\epsilon_{j+1}}=0,
\end{align*}
where we used the convention: $x^\nu=0$ if $\exists\ i_0$ such that $\nu_{i_0}<0$ or $>1$;
when $i<j-1$, obviously, we have $E_iF_j=F_jE_i$.
Similarly we can check that $(R3)$ holds for $i>j$.

$(R4)$: These are clear.

$(R5)$: Note that the assumptions in $(R5)$ are $|\,i{-}j\,|=1$ and $i\neq m$. It follows from Theorem \ref{th1} (for details, see \cite{Hu}) that $(R5)$ holds for the case $1\leq i,\;j<m$ and $|\,i{-}j\,|=1$ when acting on $\Omega_q$. Firstly, we check the relation of $E_i$'s in the case when $i>m$ and $j=i-1$ and the remainder for $E_i$'s with $i>m$ and $j=i+1$,
as well as $F_i$'s can be dealt with in a similar way.

$(1)$\quad When $i=m+1$, $j=m$ and the relation of $E_i$'s becomes
\begin{equation}\label{c6}
    E_{m+1}^2E_m-(q+q^{-1})E_{m+1}E_mE_{m+1}+E_mE_{m+1}^2=0.
    \end{equation}
Note that $E_m=x_m\partial_{m+1}\sigma_m$ and $E_{m+1}=x_{m+1}\partial_{m+2}\sigma_{m+1}$, and
\begin{gather*}
E_m.(x^{(\alpha)}\otimes x^\mu))=[\,\alpha_m{+}1\,]\,x^{(\alpha+\epsilon_m)}\otimes x^{\mu-\epsilon_{m+1}},\\
E_{m+1}.(x^{(\alpha)}\otimes x^\mu)=x^{(\alpha)}\otimes x^{\mu+\epsilon_{m+1}-\epsilon_{m+2}}.
\end{gather*}
So, no matter which one of terms $E_{m+1}^2E_m,\ E_{m+1}E_mE_{m+1}$ and $E_mE_{m+1}^2$ acting on
any basis element $x^{(\alpha)}\otimes x^\mu$, gives such an element $x^{(\alpha+\epsilon_m)}\otimes x^{\mu+\epsilon_{m+1}-2\epsilon_{m+2}}$ even with non-zero coefficient, which vanishes since $\mu_{m+2}-2<0$.
This gives the required relation (\ref{c6}).

$(2)$\quad When $i>m+1$, $j=i-1$,
note that $E_i=x_i\partial_{i+1}\sigma_i$, and $E_i.(x^{(\alpha)}\otimes x^\mu)=x^{(\alpha)}\otimes x^{\mu+\epsilon_i-\epsilon_{i+1}}$. Based on the same reason as the case when $i=m+1$, each term $E_i^2E_{i-1}$,
$E_iE_{i-1}E_i$, or $E_{i-1}E_i^2$ trivially acts on any basis element $x^{(\alpha)}\otimes x^\mu$.
This gives $(R5)$.

$(R6)$: The relation $E_m^2=0$ follows from the same reason as the above proof in $(R5)$ and the relation $F_m^2=0$ follows from the fact that $x_{m+1}^2=0$ in the exterior part $\Lambda_{q^{-1}}(n)$.

$(R7)$: We now check the first relation for $E$-part, as the 2nd one can be checked similarly.
Noting that $E_i=x_i\partial_{i+1}\sigma_i$, for $i=m{-}1,\; m,\; m{+}1$, and
\begin{gather*}
E_{m-1}.(x^{(\alpha)}\otimes x^\mu)=[\,\alpha_{m-1}{+}1\,]\,x^{(\alpha+\epsilon_{m-1}-\epsilon_m)}\otimes x^\mu,\\
E_m.(x^{(\alpha)}\otimes x^\mu)=[\,\alpha_m{+}1\,]\,x^{(\alpha+\epsilon_m)}\otimes x^{\mu-\epsilon_{m+1}},\\
E_{m+1}.(x^{(\alpha)}\otimes x^\mu)=x^{(\alpha)}\otimes x^{\mu+\epsilon_{m+1}-\epsilon_{m+2}},
\end{gather*}
we have
\begin{align*}
&E_{m-1}E_mE_{m+1}E_m.(x^{(\alpha)}\otimes x^\mu)\\
&\qquad=E_{m-1}E_mE_{m+1}.([\,\alpha_m{+}1\,]\,x^{(\alpha+\epsilon_m)}\otimes x^{\mu-\epsilon_{m+1}})\\
&\qquad=E_{m-1}E_m.([\,\alpha_m{+}1\,]\,x^{(\alpha+\epsilon_m)}\otimes x^{\mu-\epsilon_{m+2}}\\
&\qquad=E_{m-1}.([\,\alpha_m{+}1\,][\,\alpha_m{+}2\,]\,x^{(\alpha+2\epsilon_m)}\otimes x^{\mu-\epsilon_{m+1}-\epsilon_{m+2}})\\
&\qquad=[\,\alpha_{m-1}{+}1\,][\,\alpha_m{+}1\,][\,\alpha_m{+}2\,]\,x^{(\alpha+\epsilon_{m-1}+\epsilon_m)}\otimes
x^{\mu-\epsilon_{m+1}-\epsilon_{m+2}},\\
&E_mE_{m-1}E_mE_{m+1}.(x^{(\alpha)}\otimes x^\mu)\\
&\qquad=E_mE_{m-1}E_m.(\delta_{0,\mu_1}\,x^{(\alpha)}\otimes x^{\mu+\epsilon_{m+1}-\epsilon_{m+2}})\\
&\qquad=E_mE_{m-1}.(\delta_{0,\mu_1}[\,\alpha_m{+}1\,]\,x^{(\alpha+\epsilon_m)}\otimes x^{\mu-\epsilon_{m+2}})\\
&\qquad=E_m.(\delta_{0,\mu_1}[\,\alpha_m{+}1\,][\,\alpha_{m-1}{+}1\,]\,x^{(\alpha+\epsilon_{m-1})}\otimes x^{\mu-\epsilon_{m+2}})\\
&\qquad=\delta_{0,\mu_1}\delta_{1,\mu_1}[\,\alpha_m{+}1\,][\,\alpha_{m-1}{+}1\,][\,\alpha_m{+}1\,]\,x^{(\alpha+\epsilon_{m-1}+\epsilon_m)}\otimes x^{\mu-\epsilon_{m+1}-\epsilon_{m+2}}=0,\\
&E_{m+1}E_mE_{m-1}E_m.(x^{(\alpha)}\otimes x^\mu)\\
&\qquad=E_{m+1}E_mE_{m-1}.([\,\alpha_m{+}1\,]\,x^{(\alpha+\epsilon_m)}\otimes x^{\mu-\epsilon_{m+1}})\\
&\qquad=E_{m+1}E_m.([\,\alpha_{m-1}{+}1\,][\,\alpha_m{+}1\,]\,x^{(\alpha+\epsilon_{m-1})}\otimes x^{\mu-\epsilon_{m+1}})\\
&\qquad=E_{m+1}.(0)=0,\\
&E_mE_{m+1}E_mE_{m-1}.(x^{(\alpha)}\otimes x^\mu)\\
&\qquad=E_mE_{m+1}E_m.([\,\alpha_{m-1}{+}1\,]\,x^{(\alpha+\epsilon_{m-1}-\epsilon_m)}\otimes x^\mu)\\
&\qquad=E_mE_{m+1}.([\,\alpha_{m-1}{+}1\,][\,\alpha_m\,]\,x^{(\alpha+\epsilon_{m-1})}\otimes x^{\mu-\epsilon_{m+1}})\\
&\qquad=E_m.([\,\alpha_{m-1}{+}1\,][\,\alpha_m\,]\,x^{(\alpha+\epsilon_{m-1})}\otimes x^{\mu-\epsilon_{m+2}})\\
&\qquad=[\,\alpha_{m-1}{+}1\,][\,\alpha_m\,][\,\alpha_m{+}1\,]\,x^{(\alpha+\epsilon_{m-1}+\epsilon_m)}\otimes x^{\mu-\epsilon_{m+1}-\epsilon_{m+2}},\\
&E_mE_{m-1}E_{m+1}E_m.(x^{(\alpha)}\otimes x^\mu)\\
&\qquad=E_mE_{m-1}E_{m+1}.([\,\alpha_m{+}1\,]\,x^{(\alpha+\epsilon_m)}\otimes x^{\mu-\epsilon_{m+1}})\\
&\qquad=E_mE_{m-1}.([\,\alpha_m{+}1\,]\,x^{(\alpha+\epsilon_m)}\otimes x^{\mu-\epsilon_{m+2}})\\
&\qquad=E_m.([\,\alpha_{m-1}{+}1\,][\,\alpha_m{+}1\,]\,x^{(\alpha+\epsilon_{m-1})}\otimes x^{\mu-\epsilon_{m+2}})\\
&\qquad=[\,\alpha_{m-1}{+}1\,][\,\alpha_m{+}1\,]^2\,x^{(\alpha+\epsilon_{m-1}+\epsilon_m)}\otimes x^{\mu-\epsilon_{m+1}-\epsilon_{m+2}}.
\end{align*}
Note that $[\alpha_m{+}2]+[\alpha_m]-(q{+}q^{-1})[\alpha_m{+}1]=0$. Therefore, we get
\begin{align*}
&\Big(E_{m-1}E_mE_{m+1}E_m +E_mE_{m-1}E_mE_{m+1}+E_{m+1}E_mE_{m-1}E_m+E_mE_{m+1}E_mE_{m-1}\\
&\qquad-(q+q^{-1})E_mE_{m-1}E_{m+1}E_m\Big).(x^{(\alpha)}\otimes x^\mu)\\
&=[\alpha_{m-1}{+}1][\alpha_m{+}1]\Bigl([\alpha_m{+}2]+[\alpha_m]-(q{+}q^{-1})[\alpha_m{+}1]\Bigr)
x^{(\alpha+\epsilon_{m-1}+\epsilon_m)}\otimes x^{\mu-\epsilon_{m+1}-\epsilon_{m+2}}=0.
\end{align*}

\smallskip
This completes the proof of (II) both for $\mathcal U_q(\mathfrak{gl}(m|n))$ and $\mathcal U_q(\mathfrak{sl}(m|n))$.

\smallskip
As for the case $\text{char}(q)=\ell>0$, it follows straightforwardly from formulae (4.7)---(4.14) that
\begin{gather*}
E_j^\ell\,|_{\Omega_q(m|n,\bold{1})}\equiv0,\qquad
F_j^\ell\,|_{\Omega_q(m|n,\mathbf {1})}\equiv0, \qquad \textit{for }\ \;j\, (\ne m)\in J,\\
E_m^2=0,\qquad F_m^2=0,\\
(K_i^{2\ell}-1)\,|_{\Omega_q(m|n,\bold{1})}\equiv0,\qquad\textit{for }\ \; i \in I,
\end{gather*}
Therefore the quantum restricted Grassmann algebra $\Omega_q(m|n,\bold{1})$
is naturally equipped with a $u_q$-module algebra structure,
where
$$u_q:=\mathcal U_q/\bigl(\,E_i^\ell,\ F_i^\ell \ (i\,(\ne m)\in J); \ E_m^2, \ F_m^2, \ K_i^{2\ell}-1\ (i\in I))\,\bigr),$$
and $\mathcal U_q=\mathcal U_q(\mathfrak{gl}(m|n))$ or $\mathcal U_q(\mathfrak{sl}(m|n))$.
\end{proof}

\subsection{The submodule structures on homogeneous spaces of $\Omega_q$}
For each $0\leq j\leq n$, we denote by
\begin{displaymath}
\Lambda_{q^{-1}}(n)^{(j)}:= \textrm{span}_{\Bbbk}\{\,x^\mu\mid |\,\mu\,|=j\,\}.
\end{displaymath}
It follows from the definition that $\Lambda_{q^{-1}}(n)=\bigoplus_{j=0}^n\Lambda_{q^{-1}}(n)^{(j)}$. For any
$t\in \mathbbm{Z}_+$, define
\begin{displaymath}
\Omega_q^{(t)}:=\bigoplus_{i+j=t}\mathcal{A}_q^{(i)}\otimes\Lambda_{q^{-1}}(n)^{(j)},
\end{displaymath}
where $\Omega_q=\Omega_q(m|n)$ or, $\Omega_q(m|n,\bold 1)$ only when $\textbf{char}(q)=\ell>2$, and
$\mathcal A_q=\mathcal A_q(m)$ or, $\mathcal A_q(m,\bold 1)$ only when $\textbf{char}(q)=\ell>2$.

It is clear that the set $\{\,x^{(\alpha)}\otimes x^\mu\in\Omega_q\mid |\,\alpha\,|+|\,\mu\,|=t\,\}$
forms a $\Bbbk$-basis of the homogeneous space $\Omega_q^{(t)}$. Therefore, we have
$\Omega_q=\bigoplus_{t\geq0}\Omega_q^{(t)}$, in particular, $\Omega_q(m|n)=\bigoplus_{t\geq 0}\Omega_q(m|n)^{(t)}$
and $\Omega_q(m|n,\bold{1})=\bigoplus_{t=0}^{N+n}\Omega_q(m|n,\bold {l})^{(t)}$, where $N=|\,\tau(m)\,|=m(\ell{-}1)$.

\begin{theorem}
Each subspace $\Omega_q(m|n)^{(t)}$ is a $\mathcal U_q$-submodule of $\Omega_q(m|n)$ and
$\Omega_q(m|n, \bold 1)^{(t)}$ is a $u_q$-submodule of $\Omega_q(m|n, \bold 1)$ when $\textbf{char}(q)=\ell>2$.
\begin{enumerate}[$($i$)$]
\item \label{p1}If $\textbf{char}(q)=0$, then each submodule $\Omega_q(m|n)^{(t)}\cong V(t\omega_1)$
is a simple module generated by highest weight vector $x^{(\bold{t})}\otimes 1$, where $\bold{t}=t\epsilon_1=(t,0,\cdots,0)$.

\item If $\textbf{char}(q)=\ell$, then the submodule
$\Omega_q(m|n,\bold {1})^{(t)}\cong V((\ell{-}1{-}t_i)\omega_{i-1}{+}t_i\omega_i)$, where
 $t=(i{-}1)(\ell{-}1)+t_i$ $(0\leq t_i\leq \ell{-}1,\ i\in I_0)$, is a simple module generated by highest
 weight vector $x^{(\bold{t})}\otimes 1$, where
 $\bold{t}=(\underbrace{\ell{-}1,\cdots,\ell{-}1}_{i{-}1},{t_i},0,\cdots,0)$; and
the submodule $\Omega_q(m|n,\bold {1})^{(t)}\cong V((\ell{-}2)\omega_m{+}\omega_{m+p})$,
where $t=N{+}p$, $0\le p\leq n$, is a simple module generated by highest weight vector
$x^{(\tau(m))}\otimes x_{m+1}\cdots x_{m+p}$.
\end{enumerate}
\end{theorem}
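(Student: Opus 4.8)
The plan is to treat $\Omega_q^{(t)}$ as a \emph{multiplicity-free} weight module and to pin down its highest weight vector explicitly, after which simplicity becomes almost formal. First I would verify the submodule assertion directly from the action formulae (4.7)--(4.14): $E_j,F_j$ with $j<m$ shift $\alpha$ by $\pm(\epsilon_j{-}\epsilon_{j+1})$ and leave $\mu$ alone, $E_m$ and $F_m$ exchange one bosonic unit for one fermionic unit, $E_j,F_j$ with $j>m$ shift only $\mu$, and the $K_i,\mathcal K_i$ act diagonally; hence every generator preserves the total degree $|\,\alpha\,|{+}|\,\mu\,|$ (and, at a root of unity, respects the truncation $\alpha\le\tau(m)$), so each $\Omega_q^{(t)}$ is stable.

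The key structural input is that the weight of a basis monomial $x^{(\alpha)}\otimes x^\mu$, read off from the diagonal $K_i$-action, is $\sum_{i\in I_0}\alpha_i\epsilon_i+\sum_{s}\mu_s\epsilon_{m+s}\in\Lambda$. Since the $\epsilon_i$ are linearly independent, distinct monomials have distinct weights, so every weight space of $\Omega_q^{(t)}$ is one-dimensional and any submodule (being $K_i$-stable) is the direct sum of the weight lines it meets. In particular a highest weight vector must be a single monomial annihilated by all $E_j$. I would then solve these annihilation conditions combinatorially: $E_j$ $(j<m)$ kills $x^{(\alpha)}\otimes x^\mu$ exactly when $\alpha_{j+1}=0$ (generic) or when $\alpha_{j+1}=0$ or $\alpha_j=\ell-1$ (restricted); $E_m$ kills it when $\mu_1=0$ or $\alpha_m=\ell-1$; and $E_{m+s}$ kills it when $\mu_s=1$ or $\mu_{s+1}=0$.

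These constraints force $\mu$ to be an initial block $\epsilon_{m+1}+\cdots+\epsilon_{m+p}$ and, in the restricted case, force $\alpha$ to be an ascending staircase $(\ell{-}1,\dots,\ell{-}1,a,0,\dots,0)$, a nonzero $\mu$ moreover forcing $\alpha=\tau(m)$. Imposing $|\,\alpha\,|{+}|\,\mu\,|=t$ then isolates \emph{exactly one} monomial. In the generic case this is $x^{(\bold{t})}\otimes 1$ with $\bold{t}=(t,0,\dots,0)$ and weight $t\omega_1$. In the restricted case it is $x^{(\bold{t})}\otimes 1$ when $t=(i{-}1)(\ell{-}1)+t_i\le N$, and $x^{(\tau(m))}\otimes x_{m+1}\cdots x_{m+p}$ when $t=N{+}p$; rewriting these weights via $\epsilon_k=\omega_k-\omega_{k-1}$ gives $(\ell{-}1{-}t_i)\omega_{i-1}+t_i\omega_i$ and $(\ell{-}2)\omega_m+\omega_{m+p}$, matching the statement.

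Finally I would prove that this unique highest weight vector $v_0$ generates $\Omega_q^{(t)}$: lowering by $F_j$ $(j<m)$ redistributes bosonic degree exactly as for $\mathcal A_q$ in \cite{Hu}, $F_m$ transfers a bosonic unit at site $m$ into a fermionic unit at site $m{+}1$, and $F_{m+s}$ transports the fermionic indices, so a suitable word in the $F_j$ reaches any prescribed monomial of degree $t$. Simplicity is then immediate: any nonzero submodule $M$ contains some monomial, and applying raising operators strictly increases the functional $\lambda=\sum_i c_i\epsilon_i\mapsto\sum_i(m{+}n{-}i)c_i$ while staying inside $M$; since the process can halt only at the unique $v_0$, we get $v_0\in M$, whence $M=\Omega_q^{(t)}$ by cyclicity, and $\Omega_q^{(t)}$ is the simple module of the computed highest weight. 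The main obstacle is the cyclicity step at a root of unity: the bosonic operators $F_j$ $(j<m)$ carry $q$-integer coefficients $[\,\cdot\,]$ that can vanish, so one must check, as in \cite{Hu}, that a vanishing-free lowering path to each monomial exists (the fermionic transfers $F_m,F_{m+s}$ have coefficient $1$ and cause no trouble), together with the bookkeeping forced by the truncation $\alpha\le\tau(m)$.
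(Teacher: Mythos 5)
Your overall architecture is sound, and its first half is genuinely different from (and cleaner than) the paper's argument. The multiplicity-freeness observation — distinct monomials $x^{(\alpha)}\otimes x^{\mu}$ have distinct joint $K$-eigenvalues, valid for generic $q$ and also when $\textbf{char}(q)=\ell>2$ because all exponents lie in $\{0,\dots,\ell-1\}$ — forces every submodule to be spanned by the monomials it contains. Your combinatorial solution of the annihilation conditions (staircase $\alpha$, initial-block $\mu$, and $\mu\neq 0$ forcing $\alpha=\tau(m)$) is correct and isolates exactly one $E$-primitive monomial $v_0$ in each degree, with the stated highest weights. Since each $E_j$ sends a monomial to a scalar multiple of a monomial and raises your functional by exactly $1$, every nonzero submodule contains $v_0$. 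This replaces the paper's sketchier ``simplicity criteria'' (a lexicographically minimal term together with a killing word of $E_j$'s, invoking the lifting formulas (4.18)--(4.19)) by a transparent finite-termination argument, and it also dispenses with the paper's explicit lifting computations in Steps 1--2 of part (2).

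However, there is a genuine gap at the cyclicity step, which you name but do not close. What your argument proves is only that $\mathcal U_q\cdot v_0$ is the unique minimal submodule of $\Omega_q^{(t)}$; simplicity needs $\mathcal U_q\cdot v_0=\Omega_q^{(t)}$, and nothing soft in your framework yields this (for instance, rerunning the raising argument in the quotient $\Omega_q^{(t)}/\mathcal U_q\cdot v_0$ fails, because a raising of a monomial may land inside the submodule instead of vanishing). For generic $q$ the required lowering words are routine since $[\,k\,]\neq 0$ for $k\geq 1$. At a root of unity, though, the existence of a vanishing-free lowering path from $v_0$ to an arbitrary monomial is precisely the technical core of the paper's proof, not a bookkeeping check: $F_j$ $(j<m)$ applied to a monomial with $\beta_{j+1}=\ell-1$ carries the coefficient $[\,\ell\,]=0$, so bosonic degree cannot be pushed through a saturated site, and one is forced to interleave the fermionic chains $F_{i_k-1}\cdots F_{m+1}F_m$ with bosonic blocks $F_{m-1}^{a}\cdots F_{i}^{a}$ in a particular order. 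This is exactly what the paper's long displayed words in the two cases $t\leq N$ and $t=N+p$ accomplish, with nonvanishing certified by factors such as $([\,s\,]!)^{m-i}$, $([\,t_i\,]!)^{j-1}$ and $([\,\ell-1-t'_{i-k}\,]!)^{m-i+k}$. Citing \cite{Hu} covers only the purely bosonic factor $\mathcal A_q(m,\mathbf{1})$; the interleaving across the odd node $m$ is new to the super case, and until those words (or an equivalent induction) are actually produced, the root-of-unity statement (ii) remains unproved in your write-up.
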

\begin{proof}
By abuse of notation, let us denote $U:=\mathcal U_q$ or, $u_q$ when $\textbf{char}(q)=\ell$.
It follows from the calculation results of formulae (4.7)---(4.8) that the action of $U$ on $\Omega_q$
stabilizes $\Omega_q^{(t)}$. Therefore each homogeneous subspace $\Omega_q^{(t)}$ is a $U$-submodule of $\Omega_q$.

\smallskip
(1) Now we first prove claim \eqref{p1}.

Note that $\bold{char}(q)=0$. For any $i\in I,\; j\in J$, we have
\begin{align*}
E_j.(x^{(\bold{t})}\otimes 1)=0,\quad
K_i.(x^{(\bold{t})}\otimes 1)=q^{\delta_{i,1}t}x^{(\bold{t})}\otimes 1
=q^{(t\epsilon_1,\epsilon_i)}x^{(\bold{t})}\otimes 1,
\end{align*}
which implies that $x^{(\bold{t})}\otimes 1$ is a highest weight vector with highest weight
$t\epsilon_1=t\omega_1$.

For any $x^{(\alpha)}\otimes x^\mu\in \Omega_q(m|n)^{(t)}$, we assume that $|\,\mu\,|=s$ for some positive integer $s$
such that $s\leq n$. For each $i\in I_0$, set $t_i=t{-}s{-}\sum_{j\leq i}\alpha_j$. Thus we have
\begin{displaymath}
\alpha_1=t{-}s{-}t_1, \alpha_2=t_1{-}t_2, \cdots, \alpha_{m-1}=t_{m-2}{-}t_{m-1},\text{ and }\alpha_m=t_{m-1}.
\end{displaymath}
Repeatedly using the respective actions of $F_j$'s, by $(2.9)$, we get
\begin{align*}
&F_{m-1}^{t_{m-1}+s}\cdots F_2^{t_2+s} F_1^{t_1+s}.(x^{(t\epsilon_1)}\otimes 1)\\
&\qquad=[\,t_1{+}s\,]!\cdots[\,t_{m-1}{+}s\,]!\,x^{((t-s-t_1)\epsilon_1+(t_1-t_2)\epsilon_2+\cdots+(t_{m-2}-t_{m-1})\epsilon_{m-1}
+(t_{m-1}+s)\epsilon_m)}\otimes1\\
&\qquad=[\,t_1{+}s\,]!\cdots[\,t_{m-1}{+}s\,]!\,x^{(\alpha_1\epsilon_1+\cdots+\alpha_{m-1}\epsilon_{m-1}+(\alpha_m+s)\epsilon_m)}
\otimes1,
\end{align*}
where $[m]!=[m]\cdots[2][1]$ for $m\in\mathbbm{Z}_+$. If $|\,\mu\,|=s=0$,
we complete the proof that $x^{(\alpha)}\otimes x^\mu$ is generated by highest weight vector
$x^{(t\epsilon_1)}\otimes 1$. If $|\,\mu\,|=s>0$, then we may assume
$x^\mu=x_{i_1}\cdots x_{i_s}\ (m+1\leq i_1<\cdots<i_s\leq m+n)$ without loss of generality. Thus we have
\begin{align*}
\Big\{&(F_{i_1-1}F_{i_1-2}\cdots F_{m+1}F_{m})(F_{i_2-1}F_{i_2-2}
\cdots F_{m+1}F_{m})\cdots\\
& \times(F_{i_s-1}F_{i_s-2}\cdots F_{m+1}F_{m})\Big\}.(x^{(\alpha_1\epsilon_1+\cdots+\alpha_{m-1}\epsilon_{m-1}+
(\alpha_m+s)\epsilon_m)}\otimes1)\\
&\qquad\qquad\qquad\qquad\qquad\qquad=\ x^{(\alpha_1\epsilon_1+\cdots+\alpha_{m-1}\epsilon_{m-1}+\alpha_m\epsilon_m)}\otimes x_{i_1}\cdots x_{i_s}\\
&\qquad\qquad\qquad\qquad\qquad\qquad=x^{(\alpha)}\otimes x^\mu.
\end{align*}
From the previous two steps, we obtain
\begin{align*}
&\Big\{(F_{i_1-1}F_{i_1-2}\cdots F_{m+1}F_{m})(F_{i_2-1}F_{i_2-2}\cdots F_{m+1}F_{m})\cdots
(F_{i_s-1}F_{i_s-2}\cdots F_{m+1} F_{m})\\
&\quad\times(F_{m-1}^{t_{m-1}+s}\cdots F_2^{t_2+s}F_1^{t_1+s})\Big\}.(x^{(t\epsilon_1)}\otimes 1)
=[\,t_1{+}s\,]!\cdots[\,t_{m-1}{+}s\,]!\,x^{(\alpha)}\otimes x^\mu.
\end{align*}
Thus we show that $\Omega_q(m|n)^{(t)}$ is indeed a highest weight module,
generated by highest weight vector $x^{(t\epsilon_1)}\otimes1$ with highest weight $t\epsilon_1=t\omega_1$.

Furthermore, note that $x^{(\alpha)}\otimes x^\mu\in \Omega_q(m|n)^{(t)}$
(i.e., $|\,\alpha{+}\mu\,|=t$) can be lifted to the highest weight vector
$x^{(t\epsilon_1)}\otimes1$ by repeated actions of some $E_j$'s $(j\in J)$. If $|\,\mu\,|=s=0$,
$x^{(\alpha)}\otimes x^\mu=x^{(\alpha)}\otimes 1$ is reduced to the case $\mathcal A_q(m)^{(t)}$ which has been
proved in \cite{Hu}.
If $|\,\mu\,|=s>0$, that is, $x^\mu=x_{i_1}\cdots x_{i_s}$ $(m{+}1\leq i_1<\cdots<i_s\leq m{+}n)$, then we have
\begin{align}
&\Big\{(E_mE_{m+1}\cdots E_{i_s-2}E_{i_s-1})\cdots(E_mE_{m+1}\cdots E_{i_2-2}E_{i_2-1})(E_mE_{m+1}\cdots
E_{i_1-2}E_{i_1-1})\Big\}.\\
&\qquad\qquad(x^{(\alpha)}\otimes x_{i_1}\cdots x_{i_s})=
[\,\alpha_m{+}1\,][\,\alpha_m{+}2\,]\cdots[\,\alpha_m{+}s\,]\,x^{(\alpha+s\epsilon_m)}\otimes 1.\notag
\end{align}
Meanwhile,
\begin{align}
&E_1^{\alpha_2+\cdots+\alpha_m+s}\cdots E_{m-2}^{\alpha_{m-1}+\alpha_m+s}E_{m-1}^{\alpha_m+s}.
(x^{(\alpha+s\epsilon_m)}\otimes1)\\
&\quad=\prod_{i=1}^{m-1}([\,\alpha_i{+}1\,][\,\alpha_i{+}2\,]\cdots
[\,\alpha_i{+}\alpha_{i+1}{+}\cdots{+}\alpha_{m}{+}s\,])\,x^{((\alpha_1
+\alpha_2+\cdots+\alpha_m+s)\epsilon_1)}\otimes1\notag\\
&\quad=([\,t{-}s{-}t_1{+}1\,][\,t{-}s{-}t_1{+}2\,]\cdots[\,t\,])\prod_{i=2}^{m-1}
([\,t_{i-1}{-}t_i{+}1\,][\,t_{i-1}{-}t_i{+}2\,]\cdots[\,t_{i-1}{+}s\,])\notag\\
&\qquad\times
x^{(t\epsilon_1)}\otimes1\ne 0.\notag
\end{align}

{\it Simplicity criteria}:
Now suppose that $0\ne V\subseteq \Omega_q(m|n)^{(t)}$ is a minimal submodule, i.e., simple submodule,
$\exists$ $0\ne v=\sum_{\alpha,\mu}
a_{\alpha,\mu}x^{(\alpha)}\otimes x^\mu\in V$ with $a_{\alpha,\mu}\ne0$ and $|\,\alpha{+}\mu\,|=t$, among those
$m{+}n$-tuples $(\alpha,\mu)\in \mathbb Z^m\times\mathbb Z_2^n$, with respect to the lexicographical order,
there exist a minimal $m{+}n$-tuple $(\alpha_0,\mu_0)$ and a fixed word $\omega$ with largest length consisting of a certain
$E_j$'s $(j\in J)$ such that $\omega.v=a_{\alpha_0,\mu_0}\omega.x^{(\alpha_0)}\otimes x^{\mu_0}
=*a_{\alpha_0,\mu_0}\,x^{(t\epsilon_1)}\otimes 1\in V$,
by (4.18) \& (4.19).
Thus we acquire that $V=\Omega_q(m|n)^{(t)}\cong V(t\omega_1)$, a simple highest weight module with highest weight
$t\omega_1$.

\smallskip
(2) In the case when $\bold{char}(q)=\ell\geq3$, we consider the module structure
of $\Omega_q(m|n,\bold{1})^{(t)}$ by dividing the possible value of $t$ into the following two cases.

(I) For $0\leq t\leq N$, there exist $i$ and $t_i$ such that
$1\leq i\leq m,\,t=(i-1)(\ell-1)+t_i$ with $0\leq t_i\leq \ell-1$.
Consider the vector $x^{(\bold{t})}\otimes1\in \Omega_q(m|n,\bold{1})^{(t)}$,
where $\bold{t}=(\ell-1)\epsilon_1+\cdots+(\ell-1)\epsilon_{i-1}+t_i\epsilon_i=(\ell{-}1{-}t_i)\omega_{i-1}{+}t_i\omega_i$.
Noting that $[\,\ell\,]=0$ and using the action formulae in Theorem 27, we have
\begin{align*}
E_j.(x^{(\bold{t})}\otimes1)&=0,\quad (j\in J)\\
K_j.(x^{(\bold{t})}\otimes1)&=q^{\delta_{j,i-1}(\ell-1)+\delta_{j,i}t_i}x^{(\bold{t})}\otimes 1=q^{(\bold{t},\epsilon_j)}x^{(\bold{t})}\otimes 1,\quad (j\in I_0)\\
K_j.(x^{(\bold{t})}\otimes1)&=x^{(\bold{t})}\otimes1, \quad (j\in I_1)\\
\mathcal K_j.(x^{(\bold{t})}\otimes1)&=q^{(\bold{t},\epsilon_j-\epsilon_{j+1})}x^{(\bold{t})}\otimes 1, \quad (j\in I_0)\\
\mathcal K_j.(x^{(\bold{t})}\otimes1)&=x^{(\bold{t})}\otimes1, \quad (j\in I_1)
\end{align*}
which imply that $x^{(\bold{t})}\otimes1$ is a highest weight vector with
highest weight $(\ell-1-t_i)\omega_{i-1}+t_i\omega_i$.

Then for any $x^{(\alpha)}\otimes x^\mu\in \Omega_q(m|n,\bold {1})^{(t)}$ with $|\alpha|+|\,\mu\,|=t$\,
(where we assume $0\leq |\,\mu\,|=s\leq n$), we have $|\alpha|=t{-}s=(i{-}1)(\ell{-}1){+}t_i{-}s\geq 0$,
$0\leq\alpha_j\leq \ell{-}1 \ (1\leq j\leq m)$. And there exists a minimal nonnegative integer
$k$ such that $k(\ell{-}1){+}t_i{-}s\in \mathbbm{Z_+}$. Denote by $t_{i-k}'=k(\ell{-}1){+}t_i{-}s$.
Due to the minimality of $k$, we get $0\leq t_{i-k}'<\ell{-}1$. Note that the relative highest weight $\mbox{\boldmath$\nu$}$
with respect to the quantum (Hopf) subalgebra
$u_q(\mathfrak{gl}(m))$ or $u_q(\mathfrak{sl}(m))$, corresponding
to length $|\alpha|=t{-}s=(i{-}k{-}1)(\ell{-}1){+}t_{i-k}'$, is $\mbox{\boldmath$\nu$}=(\ell{-}1)\epsilon_1{+}\cdots{+}
(\ell{-}1)\epsilon_{i-k-1}{+}t_{i-k}'\epsilon_{i-k}$.
If $s=0$ (i.e., $x^\mu=1$), then we get $k=0$, $t_i'=t_i$ and $|\alpha|=t=(i{-}1)(\ell{-}1){+}t_i$.
If $s>0$, we write $x^\mu=x_{i_1}\cdots x_{i_s}$ $(m{+}1\leq i_1<\cdots<i_s\leq m{+}n)$.

Moreover, if $t_i{-}s\in\mathbbm{Z_+}$, then $k=0$ and $t_{i}'=t_i{-}s$. Thus we have
\begin{align*}
&\Big\{(F_{i_{1}-1}F_{i_{1}-2}\cdots F_{m+1}F_{m})(F_{i_{2}-1}F_{i_{2}-2}\cdots F_{m+1} F_{m})\cdots
(F_{i_s-1}F_{i_s-2}\cdots F_{m+1}F_{m})\times\\
&\quad\times(F_{m-1}^{s}\cdots F_{i+1}^{s}F_i^{s})\Big\}.(x^{(\bold{t})}\otimes 1)
=([\,s\,]!)^{m-i}x^{((\ell-1)\epsilon_1+\cdots+(\ell-1)\epsilon_{i-1}+t_i'\epsilon_i)}\otimes x_{i_1}\cdots x_{i_s}\\
&\qquad=*\,x^{(\mbox{\boldmath$\nu$})}\otimes x^\mu\neq 0.
\end{align*}
So vector $x^{((\ell-1-t_i')\omega_{i-1}+t_i'\omega_i)}\otimes x^\mu$
is generated by highest weight vector $x^{(\bold{t})}\otimes1$.

If $t_i{-}s<0$, then $k>0$. Thus for $\bold t=(\ell{-}1)\epsilon_1{+}\dots{+}(\ell{-}1)\epsilon_{i-1}{+}t_i\epsilon_i$, we have
\begin{align*}
&\left\{(F_{i_{(s-t_i+1)}-1}F_{i_{(s-t_i+1)}-2}\cdots F_{m+1}F_{m})(F_{i_{(s-t_i+2)}-1}F_{i_{(s-t_i+2)}-2}
\cdots F_{m+1} F_{m})\right.\cdots\times\\
&\qquad \times(F_{i_s-1}F_{i_s-2}\cdots F_{m+1}F_{m})
(F_{m-1}^{t_i}\cdots F_{i+1}^{t_i}F_i^{t_i})\Big\}.(x^{(\bold{t})}\otimes 1)\\
&\quad\qquad=([\,t_i\,]!)^{m-i}x^{((\ell-1)\epsilon_1+\cdots+(\ell-1)\epsilon_{i-1})}\otimes
\underbrace{x_{i_{s-t_i+1}}\cdots x_{i_s}}_{t_i}\neq 0.\\
\end{align*}
So, for $t_{i-k}'=k(\ell{-}1){-}(s{-}t_i)\ge0$, i.e., $s{-}t_i{-}k(\ell{-}1){+}t_{i-k}'{+}1=1$, we have
\begin{align*}
&\Big\{(F_{i_{(s-t_i-k(\ell-1)+t_{i-k}'+1)}-1}\cdots F_{m+1}F_{m})
(F_{i_{(s-t_i-k(\ell-1)+t_{i-k}'+2)}-1}\cdots F_{m+1}F_{m})
\cdots\times\\
&\quad\times(F_{i_{(s-t_i-(k-1)(\ell-1))}-1}
\cdots F_{m+1}F_{m})
(F_{m-1}^{\ell-1-t_{i-k}'}\cdots F_{i-k+1}^{\ell-1-t_{i-k}'}F_{i-k}^{\ell-1-t_{i-k}'})\times\\
&\quad\times(F_{i_{(s-t_i-(k-1)(\ell-1)+1)}-1}\cdots F_{m+1}F_{m})
\cdots(F_{i_{(s-t_i-(k-2)(\ell-1))}-1}\cdots F_{m+1}F_{m})\times\\
&\quad\times(F_{m-1}^{\ell-1}\cdots F_{i-k+2}^{\ell-1}F_{i-k+1}^{\ell-1})\cdots(F_{i_{(s-t_i-2(\ell-1)+1)}-1}\cdots
F_{m+1}F_{m})\cdots\times\\
&\quad\times(F_{i_{(s-t_i-(\ell-1))}-1}\cdots F_{m+1}F_{m})(F_{m-1}^{\ell-1}\cdots F_{i-1}^{\ell-1}F_{i-2}^{\ell-1})
(F_{i_{(s-t_i-(\ell-1)+1)}-1}\cdots F_{m+1}F_{m})\cdots\times\\
&\quad\times(F_{i_{(s-t_i)}-1}\cdots F_{m+1}F_{m})
(F_{m-1}^{\ell-1}\cdots F_i^{\ell-1}F_{i-1}^{\ell-1})
\Big\}.
(x^{((\ell-1)\epsilon_1+\cdots+(\ell-1)\epsilon_{i-1})}\otimes x_{i_{s-t_i+1}}\cdots x_{i_s})\\
&\qquad=([\,\ell{-}1{-}t_{i-k}'\,]!)^{m-i+k}([\,\ell{-}1\,]!)^{(k-1)(m-i)+\frac{k(k-1)}{2}}
 x^{((\ell-1)\epsilon_1+\cdots+(\ell-1)\epsilon_{i-k-1}
+t_{i-k}'\epsilon_{i-k})}\\
&\quad\qquad\otimes x_{i_1}\cdots x_{i_s}=*'\,x^{(\mbox{\boldmath$\nu$})}\otimes x^\mu \neq\ 0.
\end{align*}
That is, the vector $x^{(\mbox{\boldmath$\nu$})}\otimes x^\mu$ is generated by highest weight
vector $x^{(\bold{t})}\otimes 1$.

Moreover, since Proposition 4.2 in \cite{Hu} has proved that for any $1\le t' \le N$, $\mathcal A_q(m,\bold 1)^{(t')}$
is a simple $\mathfrak u_q(\mathfrak{gl}(m))$-
or $\mathfrak u_q(\mathfrak{sl}(m))$-module in the case when $\textbf{char}(q)=\ell>2$, we deduce that
any vector $x^{(\alpha)}\otimes x^\mu\in \Omega_q(m|n,\bold 1)^{(t)}$ with
$t'=t-s=|\,\alpha\,|=(i{-}k{-}1)(\ell{-}1){+}t_{i-k}'=|\,${\boldmath$\nu$}$\,|$ is generated by
$x^{(\mbox{\boldmath$\nu$})}\otimes x^\mu$ in terms of some suitable operators $F_j$'s $(j<m)$
only from the subalgebra $\mathfrak u_q(\mathfrak{sl}(m))$,
where $\mbox{\boldmath$\nu$}:=(\ell{-}1)\epsilon_1{+}\cdots+(\ell{-}1)\epsilon_{i-k-1}{+}t_{i-k}'\epsilon_{i-k}$, is as above.

As $x^{(\alpha)}
\otimes x^\mu\in\Omega_q(m|n,\bold{1})^{(t)}$ (with $|\,\alpha\,|+|\,\mu\,|=t$) is arbitrary,
$\Omega_q(m|n,\bold{1})^{(t)}$ is a highest
weight module generated by highest weight vector $x^{(\bold{t})}\otimes 1$.

\smallskip
In what follows, we will prove $\Omega_q(m|n,\bold 1)^{(t)}$ is simple as $u_q(\mathfrak {gl}(m|n))$- or
$u_q(\mathfrak {sl}(m|n))$-module. To this end, first of all, we need to prove the following fact that
any monomial vector $x^{(\alpha)}\otimes x^\mu\in \Omega_q(m|n,\bold{1})^{(t)}$
can be lifted to highest weight vector $x^{(\bold{t})}\otimes1$ by some suitable actions of $E_j$'s $(j\in J)$.
We do it in two steps.
Firstly, we lift vector $x^{(\alpha)}\otimes x^\mu$ to $x^{(\mbox{\boldmath$\nu$})}\otimes x^\mu$,
which is due to the special case
appeared in Claim (B) of the proof of Proposition 3.5 in \cite{GH} for $\alpha\in\mathcal A_q(m, \bold 1)^{(t')}$
with $t'=t{-}s=|\,\alpha\,|=|\,\mbox{\boldmath$\nu$}\,|$
and $\mathfrak u_q(\mathfrak{sl}(m))$
when $\bold m=\bold 1$ (the restricted case). Secondly, we lift vector $x^{(\mbox{\boldmath$\nu$})}\otimes x^\mu$ to
highest weight vector $x^{(\bold{t})}\otimes1$.

Denote $r$ by the last ordinal number with $\alpha_r\neq 0$ for
the $m$-tuple $\alpha=(\alpha_1,\cdots,\alpha_m)\in\mathcal A_q(m, \bold 1)^{(t')}$.
Thus $r\geq i{-}k$ if $t_{i-k}'\neq 0$, and $r\geq i{-}k{-}1$ if $t_{i-k}'=0$.

\smallskip
Step 1. Case (a): If $t_{i-k}'\geq\alpha_r\,(>0)$, then 
\begin{align*}
E_{i-k}^{\alpha_r}\cdots E_{r-2}^{\alpha_r}E_{r-1}^{\alpha_r}.(x^{(\mbox{\boldmath$\nu$}'+\alpha_r\epsilon_r)}\otimes x^\mu)
&=([\,\alpha_r\,]!)^{r-i+k-1}[\,t_{i-k}'{-}\alpha_r{+}1\,][\,t_{i-k}'{-}\alpha_r{+}2\,]\cdots[\,t_{i-k}'\,]\\
&\quad\times x^{(\mbox{\boldmath$\nu$})}\otimes x^\mu\neq 0,
\end{align*}
where $\mbox{\boldmath$\nu$}':=\mbox{\boldmath$\nu$}-\alpha_r\epsilon_{i-k}=(\ell{-}1)\epsilon_1{+}\cdots{+}
(\ell{-}1)\epsilon_{i-k-1}{+}(t_{i-k}'{-}\alpha_r)\epsilon_{i-k}$.

Case (b): If $\alpha_r>t_{i-k}'\,(\geq 0)$, then 
\begin{align*}
&\bigl(E_{i-k}^{t_{i-k}'}\cdots E_{r-2}^{t_{i-k}'} E_{r-1}^{t_{i-k}'})(E_{i-k-1}^{\alpha_r-t_{i-k}'}\cdots
E_{r-2}^{\alpha_r-t_{i-k}'} E_{r-1}^{\alpha_r-t_{i-k}'}\bigr).(x^{(\mbox{\boldmath$\nu$}'+\alpha_r\epsilon_r)}\otimes x^\mu)\\
&\qquad=\bigl([\,\alpha_r{-}t_{i-k}'\,]![\,t_{i-k}'\,]!\bigr)^{r-i+k}[\,\ell{-}\alpha_r{+}
t_{i-k}'\,][\,\ell{-}\alpha_r{+}t_{i-k}'{+}1\,]\cdots[\,\ell{-}1\,]\, x^{(\mbox{\boldmath$\nu$})}\otimes x^\mu\neq 0,
\end{align*}
where
$$\mbox{\boldmath$\nu$}':=\mbox{\boldmath$\nu$}{-}(\alpha_r{-}t_{i-k}')
\epsilon_{i-k-1}-t_{i-k}'\epsilon_{i-k}=(\ell{-}1)\epsilon_1{+}\cdots{+}(\ell{-}1)\epsilon_{i-k-2}{+}(\ell{-}1{-}\alpha_r{+}
t_{i-k}')\epsilon_{i-k-1}.$$

Inductively, on the pair $(\alpha':=\alpha{-}\alpha_r\epsilon_r, \mbox{\boldmath$\nu$}')$,
there exists a word $\eta'$ constructed by some $E_j$'s
$(j<r{-}1< m)$ in $u_q(\mathfrak{sl}(m))
\cong u_q(\mathfrak{sl}(m|0))\subseteq u_q(\mathfrak{sl}(m|n))\subset u_q(\mathfrak{gl}(m|n))$  such that $\eta'(x^{(\alpha')}
\otimes x^\mu)=\eta'(x^{({\alpha}')})
\otimes x^\mu=c\,x^{(\mbox{\boldmath$\nu$}')}\otimes x^\mu$ $(c\in \Bbbk^\times)$. Note that $E_j(x^{(\alpha_r\epsilon_r)})=0$
and $\mathcal K_j(x^{(\alpha_r\epsilon_r)})=x^{(\alpha_r\epsilon_r)}$ for those $j<r{-}1$ and $\Delta(E_j)=E_j\otimes
\mathcal K_j+1\otimes E_j$. Using the $u_q(\mathfrak{sl}(m))$-module algebra structure on $\mathcal A_q(m,\bold 1)$ (see
Theorem 4.1 in \cite{Hu}),
we thus obtain
\begin{align*}
\eta'.(x^{(\alpha)}\otimes x^\mu)&=
\eta'.(x^{(\alpha')}x^{(\alpha_r\epsilon_r)}
\otimes x^\mu)
=\eta'.(x^{(\alpha')}x^{(\alpha_r\epsilon_r)})
\otimes x^\mu\\
&=\eta'.(x^{(\alpha')})\,\mathcal K_{\eta'}.(x^{(\alpha_r\epsilon_r)})+x^{(\alpha')}\,\eta'.(x^{(\alpha_r\epsilon_r)})
\otimes x^\mu\\
&=c\,x^{(\mbox{\boldmath$\nu$}')}x^{(\alpha_r\epsilon_r)}\otimes x^\mu\\
&=c\,x^{(\mbox{\boldmath$\nu$}'+\alpha_r\epsilon_r)}
\otimes x^\mu\neq0.
\end{align*}

Thereby, this finishes the lifting from $x^{(\alpha)}\otimes x^\mu$ to $x^{(\mbox{\boldmath$\nu$})}\otimes x^\mu$
via the above two cases.

\smallskip
Step 2. Recall that $t_{i-k}'=k(\ell{-}1){-}s{+}t_i\ge 0$ $(0\leq t_i, t_{i-k}'\leq \ell{-}1)$ with $k$ the minimal
nonnegative integer. 
If $s=0$, we have $k=0$, $t_i'=t_i$, namely, $x^{(\mbox{\boldmath$\nu$})}\otimes x^\mu=x^{(\bold{t})}\otimes1$.
If $s>0$, write $x^\mu=x_{i_1}\cdots x_{i_s}\,(m{+}1\leq i_1<\cdots<i_s\leq m{+}n)$.
If $t_i{-}s\ge 0$, then $k=0$, $t_{i}'=t_i{-}s$. Observing that $\mbox{\boldmath$\nu$}
=(\ell{-}1)\epsilon_1{+}\cdots{+}(\ell{-}1)\epsilon_{i-1}
{+}(t_i{-}s)\epsilon_{i}$, we have
\begin{align*}
&\Big\{(E_i^{s}\cdots E_{m-2}^{s}E_{m-1}^{s})(E_{m}E_{m+1}\cdots E_{i_{s}-2}E_{i_{s}-1})\cdots(E_{m}E_{m+1}\cdots E_{i_{2}-2}E_{i_{2}-1})\\
&\qquad\qquad\times(E_{m}E_{m+1}\cdots E_{i_{1}-2}E_{i_{1}-1})\Big\}.
(x^{(\mbox{\boldmath$\nu$})}
\otimes x_{i_1}\cdots x_{i_s})\\
&\qquad\qquad\quad=([\,s\,]!)^{m-1-i}[\,t_i'{+}1\,][\,t_i'{+}2\,]\cdots[\,t_i\,]\,x^{(\bold{t})}\otimes 1\neq 0.
\end{align*}
So vector $x^{(\mbox{\boldmath$\nu$})}\otimes x^\mu$ can be lifted to highest weight
vector $x^{(\bold{t})}\otimes1$.

If $t_i{-}s<0$, then $k>0$.
Noting that $\mbox{\boldmath$\nu$}
=(\ell{-}1)\epsilon_1{+}\cdots{+}(\ell{-}1)\epsilon_{i-k-1}{+}t'_{i-k}\epsilon_{i-k}$,
and $t_{i-k}'=k(\ell{-}1){-}(s{-}t_i)$ $(0\leq t_{i-k}'<\ell{-}1)$, we have
\begin{align*}
&\Big\{(E_{i-k}^{\ell-1-t_{i-k}'}\cdots E_{m-2}^{\ell-1-t_{i-k}'}E_{m-1}^{\ell-1-t_{i-k}'})(E_{m}E_{m+1}\cdots
E_{i_{(\ell-1-t_{i-k}')}-2}E_{i_{(\ell-1-t_{i-k}')}-1})\cdots\\
&\quad\times(E_{m}E_{m+1}\cdots E_{i_2-2}E_{i_2-1})(E_{m}E_{m+1}\cdots E_{i_1-2}E_{i_1-1})\Big\}.(x^{(\mbox{\boldmath$\nu$})}
\otimes x_{i_1}\cdots x_{i_s})\\
&\qquad=([\,\ell{-}1{-}t_{i-k}'\,]!)^{m-1-i+k}[\,t_{i-k}'{+}1\,][\,t_{i-k}'{+}2\,]\cdots[\,\ell{-}1\,]\\
&\quad\qquad\times x^{((\ell-1)\epsilon_1+\cdots+(\ell-1)\epsilon_{i-k})}\otimes x_{i_{\ell-t_{i-k}'}}\cdots x_{i_s}\neq 0,\\
&\Big\{(E_{i}^{t_i}\cdots E_{m-2}^{t_i}E_{m-1}^{t_i})(E_{m}E_{m+1}\cdots E_{i_{(k(\ell-1)-t_{i-k}'+t_i)}-1})
\cdots(E_{m}E_{m+1}\cdots E_{i_{(k(\ell-1)-t_{i-k}'+2)}-1})\\
&\quad\times(E_{m}E_{m+1}\cdots E_{i_{(k(\ell-1)-t_{i-k}'+1)}-1})
(E_{i-1}^{\ell-1}\cdots E_{m-2}^{\ell-1}E_{m-1}^{\ell-1})(E_{m}E_{m+1}\cdots E_{i_{(k(\ell-1)-t_{i-k}')}-1})\cdots\\
&\quad\times(E_{m}E_{m+1}\cdots E_{i_{((k-1)(\ell-1)-t_{i-k}'+2)}-1})(E_{m}E_{m+1}\cdots E_{i_{((k-1)(\ell-1)-t_{i-k}'+1)}-1})\cdots\\
&\quad\times(E_{i-k+1}^{\ell-1}\cdots E_{m-2}^{\ell-1}E_{m-1}^{\ell-1})(E_{m}E_{m+1}\cdots E_{i_{(2(\ell-1)-t_{i-k}'})-1})\cdots
(E_{m}E_{m+1}\cdots E_{i_{(\ell-t_{i-k}'+1)}-1})\\
&\quad\times(E_{m}E_{m+1}\cdots E_{i_{(\ell-t_{i-k}')}-1})
\Big\}.
(x^{((\ell-1)\epsilon_1+\cdots+(\ell-1)\epsilon_{i-k})}\otimes x_{i_{\ell-t_{i-k}'}}\cdots x_{i_s})\\
&\qquad= ([\,t_i\,]!)^{m-i+1}([\,\ell{-}1\,]!)^{(k-1)(m-i)+\frac{(k-1)(k+2)}{2}}
 x^{(\bold{t})}\otimes 1\neq\ 0,
\end{align*}
where $\bold t=(\ell{-}1)\epsilon_1{+}\cdots{+}(\ell{-}1)\epsilon_{i-1}{+}t_i\epsilon_i$.
Hence, vector $x^{(\mbox{\boldmath$\nu$})}\otimes x_{i_1}\cdots x_{i_s}$
can be lifted to highest weight vector $x^{(\bold{t})}\otimes 1$.

In summary, following the above two steps, we can lift any monomial $x^{(\alpha)}\otimes x^\mu$
to highest weight vector $x^{(\bold{t})}\otimes 1$.

Using the simplicity criteria argument of the last paragraph of part (1) of this proof,
we deduce that $\Omega_q(m|n,\bold{1})^{(t)}\cong V((\ell{-}1{-}t_i)\omega_{i-1}{+}t_i\omega_i)$
is a simple $u_q$-module in case (I).

\smallskip
(II) If $N<t\leq N{+}n$, there exists $p$ with $0<p\leq n$ such that $t=N{+}p$. We consider the vector
$x^{(\tau(m))}\otimes x^\mu$, where $x^\mu=x_{m+1}\cdots x_{m+p}\in \Omega_q(m|n,\bold{1})^{(t)}$,
$\tau(m)=(\ell{-}1,\cdots,\ell{-}1)=(\ell{-}1)\epsilon_1{+}\cdots{+}(\ell{-}1)\epsilon_{m}$.
Note that $\mathcal K_i=K_iK_{i+1}^{-1}$ for $i\in J$, and $K_j.(x_i)=q_i^{\delta_{ij}}x_i$ for $i, j\in I$,
where $q_i=q$ for $i\in I_0$
and $q_i=q^{-1}$ for $i\in I_1$.
By Theorem 27, we have
\begin{align*}
E_j.(x^{(\tau(m))}\otimes x^\mu)&=0, \quad(j\in J)
\end{align*}
\begin{align*}
K_j.(x^{(\tau(m))}\otimes x^\mu)&=q^{-1}x^{(\tau(m))}\otimes x^\mu=q^{((\ell-2)\omega_m{+}\omega_{m+p},\epsilon_j)}x^{(\tau(m))}\otimes x^\mu, \quad(j\in I_0)\\
K_j.(x^{(\tau(m))}\otimes x^\mu)&=q^{-1}x^{(\tau(m))}\otimes x^\mu, \quad(m<j\le m{+}p)\\
K_j.(x^{(\tau(m))}\otimes x^\mu)&=x^{(\tau(m))}\otimes x^\mu,\quad (j>m{+}p)\\
\mathcal K_i.(x^{(\tau(m))}\otimes x^\mu)&=x^{(\tau(m))}\otimes x^\mu, \quad (i<m)\\
\mathcal K_m.(x^{(\tau(m))}\otimes x^\mu)&=q^{\ell-1}q\,x^{(\tau(m))}\otimes x^\mu=x^{(\tau(m))}\otimes x^\mu,\\
\mathcal K_i.(x^{(\tau(m))}\otimes x^\mu)&=x^{(\tau(m))}\otimes x^\mu, \quad(m<i<m{+}p \, \textrm{ or},\, i>m{+}p)\\
\mathcal K_{m{+}p}.(x^{(\tau(m))}\otimes x^\mu)&=q^{-1}x^{(\tau(m))}\otimes x^\mu
=q_{m+p}^{((\ell-2)\omega_m{+}\omega_{m+p},\epsilon_{m+p}-\epsilon_{m+p+1})}x^{(\tau(m))}\otimes x^\mu,
\end{align*}
which imply that $x^{(\tau(m))}\otimes x^\mu$ is a highest weight vector with highest weight $(\ell{-}2)\omega_m{+}\omega_{m+p}$.

For any vector $x^{(\alpha)}\otimes x^\mu\in
\Omega_q(m|n,\bold{1})^{(t)}$ with $|\,\alpha\,|+|\,\mu\,|=t=N{+}p$, due to $N<t$ and $|\,\alpha\,|\leq N$,
we have $|\,\mu\,|=s\ge p$, $|\,\alpha\,|=t{-}s=N{-}(s{-}p),
\ 0\leq s{-}p\leq n{-}1$, $0\leq\alpha_i\leq \ell{-}1$ $(1\leq i\leq m)$. Then there exist $j$ and $t_j$ such that
$1\leq j\leq m,\,s{-}p=(j{-}1)(\ell{-}1){+}t_j$ with $0\leq t_j\leq \ell{-}1$. Hence $|\,\alpha\,|=N{-}(j{-}1)(\ell{-}1){-}t_j
=(m{-}j)(\ell{-}1){+}\ell{-}1{-}t_j$. Keep the notation $x^\mu=x_{i_1}\cdots x_{i_s}\ (1\leq i_1<\cdots<i_s\leq m{+}n)$.
Since $s{-}p\geq 0$ and $i_1<\cdots<i_s$, we have $i_{s-k}\geq m{+}p{-}k\geq m{+}1$, for each $0\leq k\leq p{-}1$. Since
\begin{align*}
&(F_{i_s-1}\cdots F_{m+p})(x^{(\tau(m))}\otimes x_{m+1}\cdots x_{m+p})
=x^{(\tau(m))}\otimes x_{m+1}\cdots x_{m+p-1}x_{i_s}\neq0,
\end{align*}
 we obtain
\begin{align*}
&\Big\{(F_{i_{(s-p+1)}-1}\cdots F_{m+1})(F_{i_{(s-p+2)}-1}\cdots F_{m+2})\cdots(F_{i_{(s-1)}-1}\cdots F_{m+p-1})\times\\
&\qquad\times(F_{i_s-1}\cdots F_{m+p})\Big\}.(x^{(\tau(m))}\otimes x_{m+1}\cdots x_{m+p})\\
&\qquad\qquad=x^{(\tau(m))}\otimes x_{i_{s-p+1}}x_{i_{s-p+2}}\cdots x_{i_{s-1}}x_{i_s}\neq 0.
\end{align*}
Moreover, noting that $s{-}p=(j{-}1)(\ell{-}1){+}t_j$, we have
\begin{align*}
&\Big\{(F_{i_{(s-p-(j-1)(\ell-1)+1)}-1}\cdots F_{m+1}F_{m})\cdots
(F_{i_{(s-p-(j-2)(\ell-1))}-1}\cdots F_{m+1}F_{m})\times\\
&\quad\times (F_{m-1}^{\ell-1}F_{m-2}^{\ell-1}\cdots F_{m-j+2}^{\ell-1})\cdots(F_{i_{(s-p-3(\ell-1)+1)}-1}\cdots F_{m+1}F_{m})\cdots\\
&\quad\times(F_{i_{(s-p-2(\ell-1))}-1}\cdots F_{m+1}F_{m})(F_{m-1}^{\ell-1}F_{m-2}^{\ell-1})(F_{i_{(s-p-2(\ell-1)+1)}-1}\cdots
F_{m+1}F_{m})\cdots\\
&\quad\times(F_{i_{(s-p-(\ell-1))}-1}\cdots F_{m+1}F_{m})(F_{m-1}^{\ell-1})
(F_{i_{(s-p-(\ell-1)+1)}-1}\cdots F_{m+1}F_{m})\cdots\\
&\quad\times(F_{i_{(s-p)}-1}\cdots F_{m+1}F_{m})
\Big\}.(x^{(\tau(m))}\otimes x_{i_{s-p+1}}x_{i_{s-p+2}}\cdots x_{i_{s-1}}x_{i_s})\\
&\qquad=([\,\ell{-}1\,]!)^{\frac{(j-1)(j-2)}{2}}
x^{((\ell-1)\epsilon_1+\cdots+(\ell-1)\epsilon_{m-j+1})}\otimes x_{i_{s-p-(j-1)(\ell-1)+1}}\cdots x_{i_s}\\
&\qquad=([\,\ell{-}1\,]!)^{\frac{(j-1)(j-2)}{2}}
x^{((\ell-1)\epsilon_1+\cdots+(\ell-1)\epsilon_{m-j+1})}\otimes x_{i_{(t_j+1)}}\cdots x_{i_s}\neq 0,
\end{align*}
\begin{align*}
&\Big\{(F_{i_1-1}\cdots F_{m+1}F_{m})(F_{i_2-1}\cdots F_{m+1}F_{m})\cdots
(F_{i_{(t_j)}-1}\cdots F_{m+1}F_{m})\times\\
&\quad\times(F_{m-1}^{t_j}F_{m-2}^{t_j}\cdots F_{m-j+1}^{t_j})\Big\}.
(x^{((\ell-1)\epsilon_1+\cdots+(\ell-1)\epsilon_{m-j+1})}\otimes x_{i_{(t_j+1)}}\cdots x_{i_s})\\
&\qquad=([\,t_j\,]!)^{j-1}x^{((\ell-1)\epsilon_1+\cdots+(\ell-1)\epsilon_{m-j}+(\ell-1-t_j)\epsilon_{m-j+1})}
\otimes x_{i_1}\cdots x_{i_s}\ \neq\ 0.
\end{align*}
Namely, we get that vector $x^{((\ell-1)\epsilon_1+\cdots+(\ell-1)\epsilon_{m-j}+
(\ell-1-t_j)\epsilon_{m-j+1})}\otimes x^\mu$ with $|\,\mu\,|=s\ge p$ is
generated by highest weight vector $x^{(\tau(m))}\otimes x_{m+1}\cdots x_{m+p}$.

As $|\,\alpha\,|=(m{-}j)(\ell{-}1){+}\ell{-}1{-}t_j$, by the same argument as the last paragraph of Step 1 in part (I),
any vector $x^{(\alpha)}\otimes x^\mu$ is generated by the
$x^{((\ell-1)\epsilon_1+\cdots+(\ell-1)\epsilon_{m-j}+(\ell-1-t_j)\epsilon_{m-j+1})}\otimes x^\mu$
via some suitable actions of $F_i$'s in $\mathfrak u_q(\mathfrak{sl}(m))$. Hence, for $N<t\leq N{+}n$,
$\Omega_q(m|n,\bold{1})^{(t)}$
is a highest weight module generated by highest weight vector $x^{(\tau(m))}\otimes x_{m+1}\cdots x_{m+p}$.

On the other hand, any vector $x^{(\alpha)}\otimes x^\mu\in \Omega_q(m|n,\bold{1})^{(t)}$ with $|\,\alpha\,|{+}|\,\mu\,|=N{+}p$
 can be lifted to highest weight vector $x^{(\tau(m))}\otimes x_{m+1}\cdots x_{m+p}$
by some actions of $E_j$'s $(j\in J)$. Indeed, we can lift the vector $x^{(\alpha)}\otimes x^\mu$
to $x^{((\ell-1)\epsilon_1+\cdots+(\ell-1)\epsilon_{m-j}+(\ell-1-t_j)\epsilon_{m-j+1})}\otimes x^\mu$ by
following Step 1 in part (I) similarly, so here we don't reduplicate the details.

Observing that $s=t_j{+}(j{-}1)(\ell{-}1){+}p$ or, $s{-}t_j{-}(j{-}1)(\ell{-}1)=p$, and
$x^\mu=x_{i_1}\cdots x_{i_s}$ $(m{+}1\leq i_1<\cdots<i_s\leq m{+}n)$,
we have
\begin{align*}
&\Big\{(E_{m-j+1}^{t_j}\cdots E_{m-2}^{t_j}E_{m-1}^{t_j})(E_m\cdots E_{i_{t_j}-2}E_{i_{t_j}-1})
\cdots(E_m\cdots E_{i_2-2}E_{i_2-1})\times\\
&\quad\times(E_m\cdots E_{i_1-2}E_{i_1-1})
\Big\}.
(x^{((\ell-1)\epsilon_1+\cdots+(\ell-1)\epsilon_{m-j}+(\ell-1-t_j)\epsilon_{m-j+1})}\otimes x_{i_1}{\cdots} x_{i_s})\\
&\qquad=([\,t_j\,]!)^{j-1}[\,\ell{-}t_j\,][\,\ell{-}t_j{+}1\,]\cdots[\,\ell{-}1\,]\,x^{((\ell-1)\epsilon_1+\cdots
+(\ell{-}1)\epsilon_{m-j+1})}\otimes x_{i_{(t_j+1)}}\cdots x_{i_s}\neq 0,\\
&\Big\{(E_mE_{m+1}{\cdots} E_{i_{(t_j+(j-1)(\ell{-}1))}-1}){\cdots}(E_mE_{m+1}{\cdots}
E_{i_{(t_j+(j-2)(\ell-1)+1)}-1})(E_{m-1}^{\ell-1})\\
&\quad\times(E_mE_{m+1}{\cdots} E_{i_{(t_j+(j-2)(\ell-1))}-1}){\cdots}(E_mE_{m+1}{\cdots}
E_{i_{(t_j+2(\ell-1)+1)}-1})(E_{m-j+3}^{\ell-1}{\cdots} E_{m-2}^{\ell-1}E_{m-1}^{\ell-1})\\
&\quad\times(E_mE_{m+1}{\cdots} E_{i_{(t_j+2(\ell-1))}-1}){\cdots}(E_mE_{m+1}{\cdots}
E_{i_{(t_j+\ell)}-1})(E_{m-j+2}^{\ell-1}{\cdots} E_{m-2}^{\ell-1}E_{m-1}^{\ell-1})\\
&\quad\times(E_m{\cdots} E_{i_{(t_j+\ell-1)}-1}){\cdots}(E_m{\cdots} E_{i_{(t_j+1)}-1})
\Big\}.(x^{((\ell-1)\epsilon_1+\cdots+(\ell-1)\epsilon_{m-j+1})}\otimes x_{i_{(t_j+1)}}{\cdots} x_{i_s})\\
&\qquad=([\,\ell{-}1\,]!)^{\frac{j(j-1)}{2}}x^{(\tau(m))}\otimes
\underbrace{x_{i_{(t_j+(j-1)(\ell-1)+1)}}{\cdots} x_{i_s}}_p\neq 0,\\
&\Big\{(E_{m+p}E_{m+p+1}{\cdots} E_{i_{(t_j+(j-1)(\ell-1)+p)}-1}){\cdots}(E_{m+2}E_{m+3}{\cdots}
E_{i_{(t_j+(j-1)(\ell-1)+2)}-1})\times
\\
&\quad\times(E_{m+1}E_{m+2}{\cdots} E_{i_{(t_j+(j-1)(\ell-1)+1)}-1})\Big\}.
(x^{(\tau(m))}\otimes x_{i_{(t_j+(j-1)(\ell-1)+1)}}{\cdots} x_{i_s})\\
&\qquad=x^{(\tau(m))}\otimes x_{m+1}{\cdots} x_{m+p}\neq 0.
\end{align*}
Therefore, we get highest weight vector
$x^{(\tau(m))}\otimes x^\mu$ by starting to lift any monomial $x^{(\alpha)}\otimes x^\mu\in \Omega_q(m|n,\bold{1})^{(t)}$.

Based on the above fact just proved and using the same argument (for the simplicity criteria) in the last paragraph of part (1), we conclude that $\Omega_q(m|n,\bold{1})^{(t)}\ (N<t=N{+}p\leq N{+}n)$ is a
simple $u_q$-module isomorphic to $V((\ell{-}2)\omega_m{+}\omega_{m+p})$.
\end{proof}

Note that the conclusions of Theorem 28 (ii) in the overlapping cases: $i=m$, $t_m=\ell{-}1$ and $p=0$ are the same,
namely, both are $V((\ell{-}1)\omega_m)$.

\begin{coro}
\begin{enumerate}[$($i$)$]
\item \label{p1}If $\textbf{char}(q)=0$, then for $t\le n$,
$$\dim_k\Omega_q(m|n)^{(t)}=\dim_k V(t\omega_1)=
\sum_{0\le s\le t}\binom{m{+}t{-}s{-}1}{t{-}s}\binom{n}{s};$$
and for $t>n$, $$\dim_k\Omega_q(m|n)^{(t)}=\dim_k V(t\omega_1)=
\sum_{0\le s\le n}\binom{m{+}t{-}s{-}1}{t{-}s}\binom{n}{s}.$$

\item If $\textbf{char}(q)=\ell$, then  for $t\le n$, $$\dim_k\Omega_q(m|n,\bold {1})^{(t)}=\dim_k V((\ell{-}1{-}t_i)\omega_{i-1}{+}t_i\omega_i)=
\sum_{0\le s\le t}\binom{n}{s}\dim_k\mathcal A_q^{(t-s)}(m;\bold 1);$$ and for $t>n$,
$$
\dim_k\Omega_q(m|n,\bold {1})^{(t)}=\dim_k V((\ell{-}1{-}t_i)\omega_{i-1}{+}t_i\omega_i)=\sum_{0\le s\le n}\binom{n}{s}\dim_k\mathcal A_q^{(t-s)}(m;\bold 1),
$$
where $\dim_k\mathcal A_q^{(s)}(m;\bold 1)=\sum_{i=0}^{\lfloor
\frac{s}{\ell}\rfloor}(-1)^i\binom{m}{i}\binom{m{+}s{-}i\ell{-}1}{n{-}1}$,
where by $\lfloor x\rfloor$ means the
integer part of $x\in\mathbb Q$.
\end{enumerate}
\end{coro}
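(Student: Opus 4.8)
The plan is to reduce the entire statement to an elementary count of a homogeneous basis, since all the representation-theoretic content has already been extracted. By Theorem 28, we have isomorphisms of $\mathcal U_q$- (resp. $u_q$-) modules $\Omega_q(m|n)^{(t)}\cong V(t\omega_1)$ in the generic case and $\Omega_q(m|n,\bold 1)^{(t)}\cong V((\ell{-}1{-}t_i)\omega_{i-1}{+}t_i\omega_i)$ (resp. $V((\ell{-}2)\omega_m{+}\omega_{m+p})$) in the restricted case. In particular, the dimension of the simple module on the right equals $\dim_{\Bbbk}\Omega_q^{(t)}$, so it suffices to compute the latter. First I would invoke the decomposition
\begin{equation*}
\Omega_q^{(t)}=\bigoplus_{s=0}^{n}\mathcal{A}_q^{(t-s)}(m)\otimes\Lambda_{q^{-1}}(n)^{(s)},
\end{equation*}
which is immediate from the definition of $\Omega_q^{(t)}$ together with $\Lambda_{q^{-1}}(n)=\bigoplus_{s=0}^{n}\Lambda_{q^{-1}}(n)^{(s)}$; here the index $s=|\mu|$ is the exterior degree and $t-s=|\alpha|$ the divided-power degree. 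Taking dimensions gives $\dim_{\Bbbk}\Omega_q^{(t)}=\sum_s\dim_{\Bbbk}\mathcal{A}_q^{(t-s)}(m)\cdot\dim_{\Bbbk}\Lambda_{q^{-1}}(n)^{(s)}$, so the proof splits into counting each tensor factor independently.

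For the exterior factor I would note that $\{x^\mu\mid\mu\in B_s\}$ is a basis of $\Lambda_{q^{-1}}(n)^{(s)}$ by Subsection 3.1, and $|B_s|=\binom{n}{s}$ since choosing $\mu=\langle i_1,\cdots,i_s\rangle$ amounts to selecting $s$ of the $n$ anticommuting generators. The summation range is then dictated by the two constraints $0\le s\le n$ (forced by $\mu\in\mathbbm Z_2^n$) and $t-s\ge0$ (needed for $\mathcal{A}_q^{(t-s)}(m)\ne0$): this is precisely where the dichotomy $t\le n$ versus $t>n$ enters, giving $s$ running in $\{0,\cdots,t\}$ in the former case and $\{0,\cdots,n\}$ in the latter. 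In the generic case the remaining factor is counted by the monomial basis $\{x^{(\alpha)}\mid\alpha\in\mathbbm Z_+^m,\ |\alpha|=t-s\}$ of $\mathcal{A}_q^{(t-s)}(m)$, whose cardinality is the number of weak compositions of $t-s$ into $m$ parts, namely $\binom{m+t-s-1}{t-s}$ by stars-and-bars. Assembling these yields exactly the two displayed formulae of part (i).

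For the restricted case the only new ingredient is the count of $\dim_{\Bbbk}\mathcal{A}_q^{(s)}(m;\bold 1)$, the number of $\alpha\in\mathbbm Z_+^m$ with $|\alpha|=s$ and each $\alpha_i\le\ell-1$. The plan here is a standard inclusion-exclusion: subtract from the unrestricted count those compositions in which a prescribed set of $i$ coordinates is forced to be $\ge\ell$, which after the substitution $\alpha_{j}\mapsto\alpha_j-\ell$ reduces to an unrestricted composition of $s-i\ell$ into $m$ parts. This produces the alternating sum $\sum_{i=0}^{\lfloor s/\ell\rfloor}(-1)^i\binom{m}{i}\binom{m+s-i\ell-1}{m-1}$, terminating at $i=\lfloor s/\ell\rfloor$ since the binomial vanishes once $s-i\ell<0$. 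I expect the only genuine care to be needed in this inclusion-exclusion bookkeeping and in pinning down the summation limits for $s$; everything else is a direct transcription of the module isomorphisms of Theorem 28 into numerical equalities. (I would also flag that the exponent $\binom{\cdots}{n-1}$ appearing in the statement of $\dim_{\Bbbk}\mathcal A_q^{(s)}(m;\bold 1)$ should read $\binom{m+s-i\ell-1}{m-1}$, the compositions being into $m$ rather than $n$ parts.)
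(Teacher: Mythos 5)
Your proposal is correct and takes essentially the same route as the paper: both arguments reduce the corollary to the graded decomposition $\Omega_q^{(t)}=\bigoplus_{s}\mathcal A_q^{(t-s)}(m)\otimes\Lambda_{q^{-1}}(n)^{(s)}$ together with the module isomorphisms of Theorem 28, the only difference being that the paper quotes the dimension counts $\dim_\Bbbk\mathcal A_q^{(s)}(m)=\binom{m+s-1}{s}$ and the inclusion-exclusion formula for $\dim_\Bbbk\mathcal A_q^{(s)}(m;\bold 1)$ from \cite{G}, \cite{Hum} and Corollary 2.6 of \cite{GH}, whereas you rederive them by stars-and-bars and inclusion-exclusion. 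Your flagged correction is also right: the paper's own proof uses $\binom{m+s-i\ell-1}{m-1}$, so the $\binom{m+s-i\ell-1}{n-1}$ in the corollary's statement is indeed a misprint.
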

\begin{proof}
Note that $\dim_k \mathcal A_q^{(s)}(m)=\dim_k \text{Sym}^s V=\binom{m{+}s{-}1}{s}$ (see \cite{G} or \cite{Hum}) when $\textbf{char}(q)=0$
and $\dim_k V=m$, while $\dim_k  \mathcal A_q^{(s)}(m;\bold 1)=\sum_{i=0}^{\lfloor
\frac{s}{\ell}\rfloor}(-1)^i\binom{m}{i}\binom{m{+}s{-}i\ell{-}1}{m{-}1}$ (see Corollary 2.6 \cite{GH}) when $\textbf{char}(q)=\ell>0$.
Then the results follow from the proof of Theorem 28.
\end{proof}

\section{Manin dual of quantum affine $(m|n)$-superspace $A_q^{m|n}$, quantum Grassmann dual superalgebra $\Omega_q^!(m|n)$ and its module algebra}

\subsection{Manin dual of quantum affine $(m|n)$-superspace $A_q^{m|n}$}
According to Manin's concept (see \cite{M}), the Manin dual of the quantum affine $m$-space $A_q^{m|0}$ is the quantum exterior $m$-space $A_q^{0|m}=\Lambda_q(m)$, and vise versa. So the Manin dual object of the quantum affine $(m|n)$-superspace $A_q^{m|n}=
A_q^{m|0}\otimes A_{q^{-1}}^{0|n}$ is $(A_q^{m|n})^!=A_q^{0|m}\otimes A_{q^{-1}}^{n|0}$, as vector spaces.
\begin{defi}
The Manin dual, $(A_q^{m|n})^!$, of the quantum affine $(m|n)$-superspace $A_q^{m|n}$
is defined to be the quotient of the free associative algebra $\Bbbk\{\,y_i\mid i\in I\,\}$ over $\Bbbk$ by the quadratic ideal
$J(\bold V)$ generated by
\begin{equation}
\begin{aligned}\label{eq0}
y_i^2, \ y_jy_i+q^{-1}y_iy_j, \quad\textit{for }\ i\in I_0, \ j\in I,\ j>i;\\
y_jy_i-q^{-1}y_iy_j, \quad\textit{for }\ i,\ j\in I_1, \ j>i.
\end{aligned}
\end{equation}
\end{defi}

Obviously, $(A_q^{m|n})^!\cong A_q^{0|m}\otimes_{\Bbbk} A_{q^{-1}}^{n|0}=\Lambda_q(m)\otimes A_{q^{-1}}^{n|0}=\Lambda_q(m)_{\bar 0}\otimes_{\Bbbk} A_{q^{-1}}^{n|0} \oplus  \Lambda_q(m)_{\bar 1}\otimes_{\Bbbk}A_{q^{-1}}^{n|0}$, as vector spaces. In fact, the Manin dual $(A_q^{m|n})^!$ is an associative $\Bbbk$-superalgebra which has a natural monomial basis consisting of $\{y^{\langle \mu, \alpha\rangle}:=y^\mu\otimes y^{\alpha}\mid \mu=\langle i_1, \cdots, i_m\rangle\in \mathbb Z_2^m,\ \alpha=(\alpha_1,\cdots,\alpha_n)\in \mathbb Z_+^n\}$, where $y^\mu=y_{i_1}\cdots y_{i_m}$ and $y^\alpha=y_1^{\alpha_1}\cdots y_n^{\alpha_n}$.
It is convenient for us to consider $(m+n)$-tuple $\langle \mu, \alpha\rangle\in \mathbbm Z_+^{m+n}$ with the convention:
 $y^{\langle \mu, \alpha\rangle}=0$ for $\langle \mu,\alpha\rangle\not\in \mathbb Z_2^m\times\mathbb Z_+^n$, and
 extend the $*$-product in Lemma 2 to the $(m+n)$-tuples in $\mathbbm Z^{m+n}$. We still have the following
\begin{equation}
\langle\mu, \alpha\rangle * \langle \nu, \beta\rangle=\mu*\nu+\alpha*\beta+\alpha*\nu=\mu*\nu+\alpha*\beta+|\nu||\alpha|.
\end{equation}
Now we can write down its multiplication formula explicitly on $(A_q^{m|n})^!$ as follows
\begin{equation}
y^{\langle \mu, \alpha\rangle}\cdot y^{\langle \nu, \beta\rangle}
=(-1)^{\alpha*\beta}(-q)^{-\langle\mu, \alpha\rangle*\langle\nu, \beta\rangle} y^{\langle \mu+\nu, \alpha+\beta\rangle}.
\end{equation}

\subsection{Quantum dual Grassmann superalgebra as $\mathcal U_q$-module algebra}

Parallel to the quantum Grassmann superalgebra $\Omega_q(m|n)$, we now introduce its Manin dual object
$\Omega_q^!(m|n)$, which we call it the quantum dual Grassmann superalgebra.

\begin{defi}
The quantum dual Grassmann superalgebra $\Omega_q^!(m|n)$ is defined as a superspace over $\Bbbk$ with the multiplication given by
\begin{equation}
(x^\mu\otimes x^{(\alpha)})\cdot(x^\nu\otimes x^{(\beta)})=(-q)^{-\alpha*\nu}x^\mu x^\nu\otimes x^{(\alpha)}x^{(\beta)},
\end{equation}
for any $x^\mu, x^\nu\in\Lambda_q(m),\, x^{(\alpha)}, x^{(\beta)}\in\mathcal{A}_{q^{-1}}(n)$,
which is an associative $\Bbbk$-superalgebra.

When $\text{\bf char}(q)=\ell\geq 3$, $\Omega_q^!(m|n,\bold{1}):=\Lambda_q(m)\otimes\mathcal{A}_{q^{-1}}(n,\bold {1})$
is a sub-superalgebra, which is referred to as the quantum restricted dual Grassmann superalgebra.
\end{defi}

From Subsections \ref{s2.5} and \ref{s3.1}, we see that the set
\begin{displaymath}
\{\,x^\mu\otimes x^{(\alpha)}\mid\mu\in \mathbbm Z_2^m,\,\alpha\in\mathbbm Z_+^n\,\}
\end{displaymath}
forms a monomial $\Bbbk$-basis of $\Omega_q^!(m|n)$,  and the set
\begin{displaymath}
\{\,x^\mu\otimes x^{(\alpha)}\mid\mu\in\mathbbm Z_2^m,\, \alpha\in\mathbbm Z_+^n,\,\alpha\leq\tau(n)=(\ell{-}1,\cdots,\ell{-}1)\,\}
\end{displaymath}
forms a monomial $\Bbbk$-basis of $\Omega_q^!(m|n,\bold{1})$.

Set $\mathcal A_{q^{-1}}:=\mathcal A_{q^{-1}}(n)$ or, $\mathcal A_{q^{-1}}(n, \bold 1)$. Over the quantum (restricted) dual Grassmann superalgebra $\Omega_q^!=
\Lambda_q(m)\otimes (\mathcal A_q^{-1})_{\bar 0}\bigoplus \Lambda_q(m)\otimes (\mathcal A_{q^{-1}})_{\bar 1}$ with
$(\mathcal A_{q^{-1}})_{\bar i}$ and $\bar i\equiv i$ $(\textit{mod \,} 2)$, we will need the parity automorphism
$\tau': \Omega_q^!\longrightarrow \Omega_q^!$ of order $2$ defined by
\begin{equation}
\tau'(x^\mu\otimes x^{(\alpha)})=(-1)^{|\alpha|}x^\mu\otimes x^{(\alpha)}, \quad\forall\  x^\mu\otimes x^{(\alpha)}\in\Omega_q^!.
\end{equation}

By Proposition 4.3 \cite{Hu}, we know that $\Lambda_q(m)$ is a $U_q(m)$- (or $u_q(m)$-)
module with the same realization of Theorem 4.1 \cite{Hu}, where $U_q(m)=U_q(\mathfrak{gl}(m))$ or
$U_q(\mathfrak{sl}(m))$, $u_q=u_q(\mathfrak{gl}(m))$ or $u_q(\mathfrak{sl}(m))$.
Meanwhile, $\mathcal A_{q^{-1}}$ is
$U_{q^{-1}}(n)$- (or $u_{q^{-1}}(n)$-) module with the realization given by Theorem 4.1 \cite{Hu} (instead $q$ by $q^{-1}$).
Based on Remark 2.3, we guess the following

\begin{theorem}
For any $x^\mu\otimes x^{(\alpha)}\in\Omega_q^!:=\Omega_q^!(m|n)$ or, $\Omega_q^!(m|n,\bold 1)$ when $\textbf{char}(q)=\ell>2$, set
\begin{gather}
E_j.(x^\mu\otimes x^{(\alpha)})= (x_j\partial_{j+1}\sigma_{j})(x^\mu\otimes x^{(\alpha)}), \quad (j\in J)\label{eq6}\\
F_j.(x^\mu\otimes x^{(\alpha)})= (\sigma_j^{-1}x_{j+1}\partial_j)(x^\mu\otimes x^{(\alpha)}), \quad (j\in J)\label{eq7}\\
K_j.(x^\mu\otimes x^{(\alpha)})=\sigma_j(x^\mu\otimes x^{(\alpha)}), \quad (j\in I) \\
 \mathcal K_j.(x^\mu\otimes x^{(\alpha)})=\sigma_j\sigma_{j+1}^{-1}(x^\mu\otimes x^{(\alpha)}), \quad (j\in J) \\
\sigma(x^\mu\otimes x^{(\alpha)})=\tau'(x^\mu\otimes x^{(\alpha)})=(-1)^{|\alpha|}x^\mu\otimes x^{(\alpha)}.\label{eq9}
\end{gather}
Formulae \eqref{eq6}---\eqref{eq9} define a $\mathcal U_q$-module algebra structure over the quantum $($restricted$)$ dual  Grassmann superalgebra $\Omega_q^!$, where $\mathcal U_q:=\mathcal U_q(\mathfrak{gl}(m|n))$,  $\mathcal U_q(\mathfrak{sl}(m|n))$ respectively, or the restricted object $u_q(\mathfrak{gl}(m|n))$, $u_q(\mathfrak{sl}(m|n))$ respectively, when $\textbf{char}(q)=\ell>2$.
\end{theorem}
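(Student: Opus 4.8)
The plan is to prove the statement as the exact Manin-dual mirror of Theorem~\ref{th5}. Under duality the two tensor factors of $\Omega_q$ are interchanged: the quantum divided power algebra $\mathcal A_q(m)$ sitting at the indices $I_0$ is now replaced by the quantum exterior algebra $\Lambda_q(m)$, while the exterior factor $\Lambda_{q^{-1}}(n)$ at $I_1$ is replaced by the divided power algebra $\mathcal A_{q^{-1}}(n)$. Consequently the square-zero phenomenon ($x_i^2=0$) now lives at the indices $i\in I_0$ and the unbounded ``divided power'' behaviour at $i\in I_1$, the $\mathbb Z_2$-grading and the parity element $\sigma=\tau'$ are recorded by the divided-power degree $|\alpha|$, and the operators $\sigma_i,\tau_i,\partial_i,x_i$ on $\Omega_q^!$ are the evident analogues of those of Subsection~3.4 obtained by this swap together with $q\leftrightarrow q^{-1}$ on each factor. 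As in Theorem~\ref{th5}, the verification splits into two parts: \emph{(I)} showing that $\Omega_q^!$ is a module algebra, i.e.\ that every generator $h$ acts on a product of two basis elements through the coproduct according to $h.(ab)=\sum(h_{(1)}.a)(h_{(2)}.b)$; and \emph{(II)} checking that the assignment respects all defining relations $(R1)$--$(R7)$ of $\mathcal U_q$, evaluated on the monomial basis.

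For the bulk of part~(II) I would invoke Remark~2.3: because $U_q(\mathfrak{gl}(m|n))$ contains $U_q(\mathfrak{gl}(m))\otimes U_{q^{-1}}(\mathfrak{gl}(n))$ (and \emph{not} $U_q(\mathfrak{gl}(m))\otimes U_q(\mathfrak{gl}(n))$), the sub-actions of the non-crossing generators decouple onto the two factors with exactly the right parameters. Concretely, for $j<m$ the action of $E_j,F_j$ together with the relevant $K_i,\mathcal K_i$ is precisely the $U_q(\mathfrak{gl}(m))$-action on $\Lambda_q(m)$ supplied by Proposition~4.3 of \cite{Hu}, so the relations restricted to these indices hold automatically; for $j>m$ it is the $U_{q^{-1}}(\mathfrak{gl}(n))$-action on $\mathcal A_{q^{-1}}(n)$ given by Theorem~4.1 of \cite{Hu} with $q$ replaced by $q^{-1}$, which settles the corresponding relations. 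The only relations that couple the two factors, and hence the only genuinely new computations, are those involving the crossing (odd, isotropic) simple root index $j=m$.

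Hence the main work, and what I expect to be the main obstacle, is the crossing-root computation, the dual image of case~(2) and of the $(R6)$, $(R7)$ checks in the proof of Theorem~\ref{th5}. Here $E_m=x_m\partial_{m+1}\sigma_m$ and $F_m=\sigma_m^{-1}x_{m+1}\partial_m$ transfer a unit degree between the exterior factor at position $m$ and the divided power factor at position $m{+}1$, and one must verify: the anticommutator $E_mF_m+F_mE_m=(\mathcal K_m-\mathcal K_m^{-1})/(q-q^{-1})$ with $\mathcal K_m=\sigma_m\sigma_{m+1}^{-1}$ and $\sigma=\tau'$; the nilpotency $E_m^2=F_m^2=0$, where now $E_m^2=0$ comes from $x_m^2=0$ in $\Lambda_q(m)$ while $F_m^2=0$ holds because the exterior generator at position $m$ can be removed only once (the two reasons being interchanged relative to Theorem~\ref{th5}); and the degenerate Serre relation $(R7)$, which again collapses because one of the monomials produced is forced to vanish. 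After absorbing the sign $(-q)^{-\alpha*\nu}$ and the $q$-powers coming from the dual multiplication formula (5.4), all of these reduce to the same $q$-integer identities used before, such as $q^{b}[a]+q^{-a}[b]=[a{+}b]$ and $[\,c{+}2\,]+[\,c\,]-(q{+}q^{-1})[\,c{+}1\,]=0$.

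Finally, in part~(I) the delicate points are the degenerate cases in which a product of two exterior monomials vanishes in $\Lambda_q(m)$ while the two Leibniz summands individually do not; this is the dual analogue of the two ``only one special case'' computations in part~(I) of Theorem~\ref{th5}, now occurring at $i\in I_0$. Confirming that the two summands cancel is exactly what fixes the precise form of $\mathcal K_i$ and of the parity $\sigma=\tau'$; the non-crossing instances are subsumed by the module-algebra structures of the two factors, while the crossing instance at $i=m$ must be checked by hand just as above. The root-of-unity assertions (the restricted object $u_q$ acting on $\Omega_q^!(m|n,\bold 1)$) then follow from the truncation relations $E_j^\ell\equiv F_j^\ell\equiv 0$ ($j\ne m$), $E_m^2=F_m^2=0$ and $K_i^{2\ell}=1$, read off the action formulae exactly as at the end of the proof of Theorem~\ref{th5}. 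Throughout, the real difficulty is the careful bookkeeping of the swapped roles of $I_0$ and $I_1$ and of the altered signs, rather than any new conceptual ingredient.
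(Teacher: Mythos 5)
Your proposal follows essentially the same route as the paper's own proof: the same two-part split (module-algebra property plus the defining relations $(R1)$--$(R7)$ on the monomial basis), the same reduction of the non-crossing indices to Proposition 4.3 and Theorem 4.1 of \cite{Hu} (with $q\mapsto q^{-1}$ on the divided-power factor), the same explicit crossing-index computations at $j=m$ (the anticommutator $E_mF_m+F_mE_m=(\mathcal K_m-\mathcal K_m^{-1})/(q-q^{-1})$, the nilpotency $E_m^2=F_m^2=0$ from $x_m^2=0$, the degenerate Serre relation via $[\,c{+}2\,]+[\,c\,]-(q{+}q^{-1})[\,c{+}1\,]=0$ and $q^{b}[\,a\,]+q^{-a}[\,b\,]=[\,a{+}b\,]$), and the same cancellation check in the degenerate cases where $x^\mu x^\nu=0$ in $\Lambda_q(m)$ while the individual Leibniz summands do not vanish. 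This is exactly the structure of the paper's proof of this theorem (itself mirroring the proof of the $\Omega_q$ case), so the proposal is correct and requires no further comment.
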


Before giving the proof, let us make some definitions.
Recall that: in $\Lambda_q(m)$, for $1\le i<j\le m$, we have $x_jx_i=-q^{-1}x_ix_j$. So, for
$\epsilon_j,\, \nu\in\mathbb Z_2^m$, we have
\begin{gather}
x_j.(x^\nu)=(-q)^{-\epsilon_j*\nu}\delta_{\nu_j,0}x^{\nu+\epsilon_j}, \quad(j\in I_0)\\
\partial_j.(x^\nu)=(-q)^{\epsilon_j*\nu}\delta_{\nu_j,1}x^{\nu-\epsilon_j}, \quad(j\in I_0).
\end{gather}
While for $j\in I_1,\ x^{(\alpha)}\in \mathcal A_{q^{-1}}$, we have
\begin{gather}
x_j.(x^{(\alpha)})=q^{-\epsilon_j*\alpha}[\,\alpha_j{+}1\,]\,x^{(\alpha+\epsilon_j)}, \quad (j\in I_1)\\
\partial_j.(x^{(\alpha)})=q^{\epsilon_j*\alpha}x^{(\alpha-\epsilon_j)}, \quad(j\in I_1)
\end{gather}
Meanwhile,
since $x_j.x^\nu=(-q)^{-\epsilon_j*\nu}x^\nu.x_j$ for $j\in I_1$ and $x^\nu\in \Lambda_q(m)$,
this allows us to make convention:
\begin{gather}
x_j.(x^\nu\otimes x^{(\alpha)})=(-q)^{-\epsilon_j*\nu}x^\nu\otimes x_j.(x^{(\alpha)}), \quad (j\in I_1)\\
\partial_j.(x^\nu\otimes x^{(\alpha)})=(-q)^{\epsilon_j*\nu}x^\nu\otimes \partial_j.(x^{(\alpha)}), \quad (j\in I_1).
\end{gather}
These lead to the actions of $E_j$, $F_j$ and $\mathcal K_j^{\pm 1}$ on $\Lambda_q(m)\otimes\mathcal A_{q^{-1}}$ when $j\in I_1$ essentially direct acting on the second factor $\mathcal A_{q^{-1}}$, which has been well given by Theorem 4.1 \cite{Hu}
(instead of $q$ there by $q^{-1}$): $E_j.(x^\nu\otimes x^{(\alpha)})=x^\nu\otimes E_j.(x^{(\alpha)})=x^\nu\otimes x_j\partial_{j+1}\sigma_j.(x^{(\alpha)})$, $F_j.(x^\nu\otimes x^{(\alpha)})=x^\nu\otimes F_j.(x^{(\alpha)})=x^\nu\otimes \sigma_j^{-1}x_{j+1}\partial_j.(x^{(\alpha)})$, where
\begin{align}
\sigma_j(x^{(\alpha)})&=q^{-\alpha_j}x^{(\alpha)}, \quad(j\in I_1).
\end{align}

\begin{proof} (I) Firstly, in order to prove the module structure on $\Omega_q^!(m|n)$, 
it suffices to check relations: (R3) for the cases when $i\le m$, (R5) for the case when $i=m{+}1$ and $j=m$,
(R6) \& (R7).

(R3): For $i=j<m$: Noticing (5.11) \& (5.12), we have
\begin{gather*}
E_i.(x^\nu\otimes x^{(\alpha)})
=(-q)^{(\epsilon_{i+1}-\epsilon_i)*\nu}\delta_{0,\nu_i}\delta_{1,\nu_{i+1}}
x^{\nu+\epsilon_i-\epsilon_{i+1}}\otimes x^{(\alpha)}=\delta_{0,\nu_i}\delta_{1,\nu_{i+1}}
x^{\nu+\epsilon_i-\epsilon_{i+1}}\otimes x^{(\alpha)},\\
F_i.(x^\nu\otimes x^{(\alpha)})
=(-q)^{(\epsilon_i-\epsilon_{i+1})*\nu+1}\delta_{1,\nu_i}\delta_{0,\nu_{i+1}}
x^{\nu-\epsilon_i+\epsilon_{i+1}}\otimes x^{(\alpha)}=\delta_{1,\nu_i}\delta_{0,\nu_{i+1}}
x^{\nu-\epsilon_i+\epsilon_{i+1}}\otimes x^{(\alpha)}.
\end{gather*}
Clearly, when $|\,i{-}j\,|>1$, we have $E_iF_j=F_jE_i$, and when $|\,i{-}j\,|=1$, we can assume $j=i{+}1$,
then
\begin{equation*}
\begin{split}
E_iF_{i+1}.(x^\nu\otimes x^{(\alpha)})&=E_i.(\delta_{1,\nu_{i+1}}\delta_{0,\nu_{i+2}}
x^{\nu-\epsilon_{i+1}+\epsilon_{i+2}}\otimes x^{(\alpha)})=0,\\
F_{i+1}E_i.(x^\nu\otimes x^{(\alpha)})&=F_{i+1}.(\delta_{0,\nu_i}\delta_{1,\nu_{i+1}}
x^{\nu+\epsilon_i-\epsilon_{i+1}}\otimes x^{(\alpha)})=0,
\end{split}
\end{equation*}
namely, $E_iF_{i+1}=F_{i+1}E_i$, for $i<m$. Similarly, we have $E_{i+1}F_i=F_iE_{i+1}$, for $i<m$.

And on the other hand, we get
\begin{equation*}
\begin{split}
(E_iF_i-F_iE_i).(x^\nu\otimes x^{(\alpha)})&=(E_iF_i-F_iE_i).(x^\nu)\otimes x^{(\alpha)}\\
&=(\delta_{1,\nu_i}\delta_{0,\nu_{i+1}}-\delta_{0,\nu_i}\delta_{1,\nu_{i+1}})
x^{\nu}\otimes x^{(\alpha)}\\
&=\frac{\mathcal K_i-\mathcal K_i^{-1}}{q-q^{-1}}.(x^\nu\otimes x^{(\alpha)}),
\end{split}
\end{equation*}
where $\mathcal K_i=\sigma_i\sigma_{i+1}^{-1}$ for $i<m$, and
\begin{align}
\sigma_i(x^\nu)&=q^{\nu_i}x^\nu, \quad (i\in I_0).
\end{align}

For $i=j=m$: By (5.11) --- (5.16), we have
\begin{gather}
E_m.(x^\nu\otimes x^{(\alpha)})=x_m\partial_{m+1}\sigma_m(x^\nu\otimes x^{(\alpha)})
=
\delta_{0,\nu_{m}}x^{\nu+\epsilon_m}\otimes x^{(\alpha-\epsilon_{m+1})},\\
F_m.(x^\nu\otimes x^{(\alpha)})=\sigma_m^{-1}x_{m+1}\partial_m(x^\nu\otimes x^{(\alpha)})
=
\delta_{1,\nu_m}[\,\alpha_{1}{+}1\,]\,x^{\nu-\epsilon_m}\otimes x^{(\alpha+\epsilon_{m+1})}.
\end{gather}
Therefore, we obtain
\begin{align*}
\Bigl(E_mF_m+F_mE_m\Bigr).(x^\nu\otimes x^{(\alpha)})
&=\bigl(\delta_{1,\nu_m}[\,\alpha_1{+}1\,]+\delta_{0,\nu_m}[\,\alpha_{1}\,]\bigr)(x^\nu\otimes x^{(\alpha)})\\
&=\frac{\mathcal K_m-\mathcal K_m^{-1}}{q-q^{-1}}.(x^\nu\otimes x^{(\alpha)}),
\end{align*}
where $\mathcal K_m=\sigma_m\sigma_{m+1}^{-1}$.

For $i=m$, $j<m$, we have $i\neq j$, $q_i=q$, and $p(F_j)=0$ which implies that $(R3)$ becomes
$E_mF_j-F_jE_m=0$. Noting that $F_j=\sigma_j^{-1}x_{j+1}\partial_j$, $E_m=x_m\partial_{m+1}\sigma_m$, by (5.13) \& (5.14),
  \begin{align*}
E_mF_j.(x^\mu\otimes x^{(\alpha)})&=E_m.(\delta_{\mu_j,1}\delta_{\mu_{j+1},0}x^{\mu-\epsilon_j+\epsilon_{j+1}}\otimes x^{(\alpha)})\\
&=\delta_{\mu_j,1}\delta_{\mu_{j+1},0}(-q)^{(\epsilon_{m+1}-\epsilon_m)*(\mu-\epsilon_j+\epsilon_{j+1})}
x^{\mu-\epsilon_j+\epsilon_{j+1}+\epsilon_m}\otimes x^{(\alpha-\epsilon_{m+1})},\\
F_jE_m.(x^\mu\otimes x^{(\alpha)})&=F_j.((-q)^{(\epsilon_{m+1}-\epsilon_m)*\mu}x^{\mu+\epsilon_m}\otimes x^{(\alpha-\epsilon_{m+1})})\\
&=\delta_{\mu_j,1}\delta_{\mu_{j+1},0}(-q)^{(\epsilon_{m+1}-\epsilon_m)*\mu}
x^{\mu-\epsilon_j+\epsilon_{j+1}+\epsilon_m}\otimes x^{(\alpha-\epsilon_{m+1})},
\end{align*}
we obtain the required result in both cases $j{+}1=m$, \& $j{+}1<m$, and thus $(R3)$ holds in this case.

For $i=m$, $j>m$, then we have $i\ne j,\ q_i=q$, and $p(F_j)=0$, thus $(R3)$ becomes $E_mF_j-F_jE_m=0$. Note $F_j=\sigma_j^{-1}x_{j+1}\partial_j$, $F_j(x^\mu{\otimes}x^{(\alpha)})=[\,\alpha_{j+1}{+}1\,]\,x^\mu{\otimes} x^{(\alpha-\epsilon_j+\epsilon_{j+1})}$. Then we have
\begin{align*}
E_mF_j.(x^\mu\otimes x^{(\alpha)})&=E_m.([\,\alpha_{j+1}{+}1\,]\,x^\mu\otimes x^{(\alpha-\epsilon_j+\epsilon_{j+1})})\\
&=[\,\alpha_{j+1}{+}1\,]\,\delta_{\mu_m,0}\,x^{\mu+\epsilon_m}\otimes x^{(\alpha-\epsilon_{m+1}-\epsilon_j+\epsilon_{j+1})},\\
F_jE_m.(x^\mu\otimes x^{(\alpha)})&=F_j.(\delta_{\mu_m,0}x^{\mu+\epsilon_m}\otimes x^{(\alpha-\epsilon_{m+1})})\\
&=\delta_{\mu_m,0}[\,\alpha_{j+1}{+}1\,]\,
x^{\mu+\epsilon_m}\otimes x^{(\alpha-\epsilon_{m+1}-\epsilon_j+\epsilon_{j+1})}.
\end{align*}
Clearly, $E_mF_j=F_jE_m$ for $j>m$.

Similarly, we can check $E_iF_m=F_mE_i$ for $i\ne m$.

(R5): It suffices to consider the case when $i=m{+}1$, $j=m$ to check that $F^2_{m+1}F_m-(q{+}q^{-1})F_{m+1}F_mF_{m+1}+F_mF_{m+1}^2=0$. This is true, because noting that $F_i=\sigma_i^{-1}x_{i+1}\partial_i$,
we have
\begin{equation*}
\begin{split}
F_{m+1}^2F_m.(x^\mu\otimes x^{(\alpha)})&=F_{m+1}^2.([\,\alpha_1{+}1\,]x^{\mu-\epsilon_m}\otimes x^{(\alpha+\epsilon_{m+1})})\\
&=[\,\alpha_1{+}1\,][\,\alpha_2{+}1\,][\,\alpha_2{+}2\,]\,x^{\mu-\epsilon_m}\otimes x^{(\alpha-\epsilon_{m+1}+2\epsilon_{m+2})},\\
F_{m+1}F_mF_{m+1}.(x^\mu\otimes x^{(\alpha)})&=F_{m+1}F_m.([\,\alpha_2{+}1\,]\,x^\mu\otimes x^{(\alpha-\epsilon_{m+1}+\epsilon_{m+2})})\\
&=[\,\alpha_1\,][\,\alpha_2{+}1\,][\,\alpha_2{+}2\,]\,x^{\mu-\epsilon_m}\otimes x^{(\alpha-\epsilon_{m+1}+2\epsilon_{m+2})},\\
F_mF_{m+1}^2.(x^\mu\otimes x^{(\alpha)})&=F_m.([\,\alpha_2{+}1\,][\,\alpha_2{+}2\,]\,x^{\mu}\otimes x^{(\alpha-2\epsilon_{m+1}+2\epsilon_{m+2})})\\
&=[\,\alpha_1{-}1\,][\,\alpha_2{+}1\,][\,\alpha_2{+}2\,]\,x^{\mu-\epsilon_m}\otimes x^{(\alpha-\epsilon_{m+1}+2\epsilon_{m+2})}.
\end{split}
\end{equation*}
Similarly, we can check $E^2_{m+1}E_m-(q{+}q^{-1})E_{m+1}E_mE_{m+1}+E_mE_{m+1}^2=0$.

(R6): $E_m^2=0=F_m^2$ is due to $x_m^2=0$ in $\Lambda_q(m)$.

(R7): Noting that $E_i=x_i\partial_{i+1}\sigma_i$, and $x_m^2=0$, we have
\begin{equation*}
\begin{split}
E_{m-1}E_mE_{m+1}E_m.(x^\mu\otimes x^{(\alpha)})&=E_{m-1}E_mE_{m+1}.(x^{\mu+\epsilon_m}\otimes x^{(\alpha-\epsilon_{m+1})})\\
&=E_{m-1}E_m.([\,\alpha_1\,]\,x^{\mu+\epsilon_m}\otimes x^{(\alpha-\epsilon_{m+2})})=0,\\
E_mE_{m-1}E_mE_{m+1}.(x^\mu\otimes x^{(\alpha)})&=E_mE_{m-1}E_m.([\,\alpha_1{+}1\,]\,x^\mu\otimes x^{(\alpha+\epsilon_{m+1}-\epsilon_{m+2})})\\
&=E_mE_{m-1}.([\,\alpha_1{+}1\,]\,x^{\mu+\epsilon_m}\otimes x^{(\alpha-\epsilon_{m+2})})\\
&=E_m.([\,\alpha_1{+}1\,]\,x^{\mu+\epsilon_{m-1}}\otimes x^{(\alpha-\epsilon_{m+2})})\\
&=[\,\alpha_1{+}1\,]\,x^{\mu+\epsilon_{m-1}+\epsilon_m}\otimes x^{(\alpha-\epsilon_{m+1}-\epsilon_{m+2})},\\
E_{m+1}E_mE_{m-1}E_m.(x^\mu\otimes x^{(\alpha)})&=E_{m+1}E_mE_{m-1}.(x^{\mu+\epsilon_m}\otimes x^{(\alpha-\epsilon_{m+1})})\\
&=E_{m+1}E_m.(x^{\mu+\epsilon_{m-1}}\otimes x^{(\alpha-\epsilon_{m+1})})\\
&=E_{m+1}.(x^{\mu+\epsilon_{m-1}+\epsilon_m}\otimes x^{(\alpha-2\epsilon_{m+1})})\\
&=[\,\alpha_1{-}1\,]\,x^{\mu+\epsilon_{m-1}+\epsilon_m}\otimes x^{(\alpha-\epsilon_{m+1}-\epsilon_{m+2})},
\end{split}
\end{equation*}
\begin{equation*}
\begin{split}E_mE_{m+1}E_mE_{m-1}.(x^\mu\otimes x^{(\alpha)})&=E_mE_{m+1}E_m.(\delta_{\mu_m,1}x^{\mu+\epsilon_{m-1}-\epsilon_m}\otimes x^{(\alpha)})\\
&=E_mE_{m+1}.(\delta_{\mu_m,1}x^{\mu+\epsilon_{m-1}}\otimes x^{(\alpha-\epsilon_{m+1})})\\
&=E_m.([\,\alpha_1\,]\delta_{\mu_m,1}\,x^{\mu+\epsilon_{m-1}}\otimes x^{(\alpha-\epsilon_{m+2})})\\
&=[\,\alpha_1\,]\delta_{\mu_m,1}\,x^{\mu+\epsilon_{m-1}+\epsilon_m}\otimes x^{(\alpha-\epsilon_{m+1}-\epsilon_{m+2})}\\
&=0, \quad (x_m^2=0),\\
E_mE_{m-1}E_{m+1}E_m.(x^\mu\otimes x^{(\alpha)})&=E_mE_{m-1}.([\,\alpha_1\,]\,x^{\mu+\epsilon_m}\otimes x^{(\alpha-\epsilon_{m+2})})\\
&=E_m.([\,\alpha_1\,]\,x^{\mu+\epsilon_{m-1}}\otimes x^{(\alpha-\epsilon_{m+2})})\\
&=[\,\alpha_1\,]\,x^{\mu+\epsilon_{m-1}+\epsilon_m}\otimes x^{(\alpha-\epsilon_{m+1}-\epsilon_{m+2})}.
\end{split}
\end{equation*}
This gives the required relation: $E_{m-1}E_mE_{m+1}E_m+E_mE_{m-1}E_mE_{m+1}+E_{m+1}E_mE_{m-1}E_m+E_mE_{m+1}E_mE_{m-1}
-(q{+}q^{-1})E_mE_{m-1}E_{m+1}E_m=0$.

Similarly, it is easy to check that:
$F_{m-1}F_mF_{m+1}F_m+F_mF_{m-1}F_mF_{m+1}+F_{m+1}F_mF_{m-1}F_m+F_mF_{m+1}F_mF_{m-1}
-(q{+}q^{-1})F_mF_{m-1}F_{m+1}F_m=0$.

\smallskip
(II) Secondly, it suffices to check that the quantum dual Grassmann algebra $\Omega_q^!(m|n)$ is a $\mathcal U_q(\mathfrak{gl}(m|n))$-module algebra. To this end, we only check here that $E_m$ and $F_m$ act on product
element $(x^\mu\otimes x^{(\alpha)})(x^\nu\otimes x^{(\beta)})$ via $\Delta(E_m)=E_m\otimes \mathcal K_m+\sigma\otimes E_m$ and $\Delta(F_m)=F_m\otimes 1+\sigma\mathcal K_m^{-1}\otimes F_m$, respectively.

(1) For $i=m$: $E_m=x_m\partial_{m+1}\sigma_m$, $\mathcal K_m=\sigma_m\sigma_{m+1}^{-1}$, $\forall\ x^\mu\otimes x^{(\alpha)} \in\Omega_q^!$, where $\mu=(\mu_1,\cdots,\mu_n)\in\mathbb Z_2^m$, noting that $E_m(x^\mu\otimes x^{(\alpha)})=\delta_{0,\mu_m}\,x^{\mu+\epsilon_m}\otimes x^{(\alpha-\epsilon_{m+1})}$, we get
\begin{align*}
E_m&.(x^\mu\otimes x^{(\alpha)})\mathcal K_m.(x^\nu\otimes x^{(\beta)})+\sigma(x^\mu\otimes x^{(\alpha)})E_m.(x^\nu\otimes x^{(\beta)})\\
&=\delta_{0,\mu_m}q^{\nu_m+\beta_1}(-q)^{-(\alpha-\epsilon_{m+1})*\nu}\,x^{\mu+\epsilon_m}x^{\nu}\otimes x^{(\alpha-\epsilon_{m+1})}x^{(\beta)}
\\
&\ +(-1)^{|\alpha|}\delta_{0,\nu_m}(-q)^{-\alpha*(\nu+\epsilon_m)}\,x^{\mu}x^{\nu+\epsilon_m}\otimes x^{(\alpha)}x^{(\beta-\epsilon_{m+1})}\\
&=\delta_{0,\mu_m}q^{\nu_m+\beta_1}(-q)^{-(\alpha-\epsilon_{m+1})*\nu-(\mu+\epsilon_m)*\nu}\,q^{-(\alpha-\epsilon_{m+1})*\beta}
{\alpha{+}\beta{-}\epsilon_{m+1}\brack \beta}\,x^{\mu+\nu+\epsilon_{m}}{\otimes} x^{(\alpha+\beta-\epsilon_{m+1})}
\\
&\ +(-1)^{|\alpha|}\delta_{0,\nu_m}(-q)^{-\alpha*(\nu+\epsilon_m)-\mu*(\nu+\epsilon_m)}q^{-\alpha*(\beta-\epsilon_{m+1})}
{\alpha{+}\beta{-}\epsilon_{m+1}\brack \alpha}\,x^{\mu+\nu+\epsilon_{m}}\otimes x^{(\alpha+\beta-\epsilon_{m+1})}\\
&=\delta_{0,\mu_m+\nu_m}\frac{\bigl(q^{2\nu_m+\beta_1}(-1)^{\nu_m}[\,\alpha_1\,]+q^{-\alpha_1}[\,\beta_1\,]\bigr)q^{-\alpha*\beta}
(-q)^{-\alpha*\nu-\mu*\nu}}{[\alpha_{1}{+}\beta_{1}]}{\alpha{+}\beta\brack \alpha}\times \\
&\qquad \times x^{\mu+\nu+\epsilon_{m}}{\otimes}x^{(\alpha+\beta-\epsilon_{m+1})} \\
&=q^{-\alpha*\beta}
(-q)^{-\alpha*\nu-\mu*\nu}{\alpha{+}\beta\brack \alpha}\,x_m\partial_{m+1}\sigma_m(x^{\mu+\nu}\otimes x^{(\alpha+\beta)})\\
&=E_m.\Bigl((x^{\mu}\otimes x^{(\alpha)})(x^{\nu}\otimes x^{(\beta)})\Bigr),
\end{align*}
where we used $-(\epsilon_m{-}\epsilon_{m+1})*\nu=\nu_m$ and $\epsilon_{m+1}*\beta=0=\mu*\epsilon_m$, $\alpha*\epsilon_m=|\,\alpha\,|$, and $(-1)^{|\,\alpha\,|}(-q)^{-|\,\alpha\,|}q^{\alpha*\epsilon_{m+1}}=q^{-\alpha_1}$ and $q^{\beta_1}[\,\alpha_1\,]+q^{-\alpha_1}[\,\beta_1\,]=[\,\alpha_1{+}\beta_1\,]$.

For $F_m=\sigma_m^{-1}x_{m+1}\partial_m$: $\forall\ x^\mu\otimes x^{(\alpha)}, \ x^\nu\otimes x^{(\beta)}\in\Omega_q^!$,
noting that $\sigma_m^{-1}x_{m+1}\partial_m(x^\mu\otimes x^{(\alpha)})=\delta_{1,\mu_m}[\,\alpha_1{+}1\,]\,x^{\mu-\epsilon_{m}}\otimes x^{(\alpha+\epsilon_{m+1})}$, and $\mathcal K_m^{-1}.(x^\mu)=\sigma_m^{-1}\sigma_{m+1}(x^\mu)=q^{-\mu_m} x^\mu$,
we have
\begin{align*}
&F_m.(x^\mu\otimes x^{(\alpha)})(x^\nu\otimes x^{(\beta)})+\sigma \mathcal K_m^{-1}.(x^\mu\otimes x^{(\alpha)})F_m.(x^\nu\otimes x^{(\beta)})\\
&=\delta_{1,\mu_m}(-q)^{-(\alpha+\epsilon_{m+1})*\nu-(\mu-\epsilon_m)*\nu}q^{-(\alpha+\epsilon_{m+1})*\beta}\,
[\alpha_1{+}1]{\alpha{+}\beta{+}\epsilon_{m+1}\brack \beta}\,x^{\mu+\nu-\epsilon_{m}}\otimes x^{(\alpha+\beta+\epsilon_{m+1})}\\
&\ +(-1)^{|\alpha|}\delta_{1,\nu_m}(-q)^{-(\mu+\alpha)*(\nu-\epsilon_m)}q^{-\mu_m-\alpha_1-\alpha*(\beta+\epsilon_{m+1})}
[\beta_1{+}1]{\alpha{+}\beta{+}\epsilon_{m+1}\brack \alpha}
x^{\mu+\nu-\epsilon_{m}}{\otimes} x^{(\alpha+\beta+\epsilon_{m+1})}\\
&=(\delta_{1,\mu_m}(-q)^{-\nu_m}+\delta_{1,\nu_m}q^{-\mu_m})(-q)^{-\alpha*\nu-\mu*\nu}q^{-\alpha*\beta}[\,\alpha_1{+}\beta_1{+}1\,]{\alpha{+}\beta\brack \alpha}\,x^{\mu+\nu-\epsilon_{m}}{\otimes}x^{(\alpha+\beta+\epsilon_{m+1})} \\
&=\delta_{1,\mu_m+\nu_m}(-q)^{-\alpha*\nu-\mu*\nu}q^{-\alpha*\beta}[\,\alpha_1{+}\beta_1{+}1\,]{\alpha{+}\beta\brack \alpha}\,x^{\mu+\nu-\epsilon_{m}}{\otimes}x^{(\alpha+\beta+\epsilon_{m+1})} \\
&=(-q)^{-\alpha*\nu-\mu*\nu}q^{-\alpha*\beta}{\alpha{+}\beta\brack \alpha}\,F_m.(x^{\mu+\nu}\otimes x^{(\alpha+\beta)} )=F_m.\Bigl((x^{\mu}\otimes x^{(\alpha)})(x^{\nu}\otimes x^{(\beta)})\Bigr),
\end{align*}
where we used $(\epsilon_m-\epsilon_{m+1})*\nu=-\nu_m$, $\epsilon_{m+1}*\beta=0$, $\mu*\epsilon_{m}=0$ and $\alpha*(\epsilon_m-\epsilon_{m+1})=\alpha_1$.

(2) Both for $i<m$ and $i>m$: we easily check that
\begin{gather*}
E_i.(x^\mu\otimes x^{(\alpha)})=E_i.(x^\mu)\otimes x^{(\alpha)}, \quad (i<m)\\
F_i.(x^\mu\otimes x^{(\alpha)})=F_i.(x^\mu)\otimes x^{(\alpha)}, \quad (i<m)\\
E_i.(x^\mu\otimes x^{(\alpha)})=x^\mu\otimes E_i.(x^{(\alpha)}), \quad (i>m)\\
F_i.(x^\mu\otimes x^{(\alpha)})=x^\mu\otimes F_i.(x^{(\alpha)}), \quad (i>m).
\end{gather*}
By analogue of the arguments used in Steps (I) (3) \& (I) (1) of the proof of Theorem 27, we can
examine that that $E_i$ and $F_i$ ($i\ne m$) act on product
element $(x^\mu\otimes x^{(\alpha)})(x^\nu\otimes x^{(\beta)})$ via $\Delta(E_i)=E_i\otimes \mathcal K_i+1\otimes E_i$ and $\Delta(F_i)=F_i\otimes 1+\mathcal K_i^{-1}\otimes F_i$ $(i\ne m)$, respectively.

This completes the proof.
\end{proof}

From the proof of Theorem 32, we see that $\Omega_q^!(m|n)$ as the $\mathcal U_q$-module algebra containing
$\Lambda_q(m)$ as a part of its even part has different
superalgebra structure from that of $\Omega_q(m|n)$.

\subsection{The submodule structures on homogeneous spaces of $\Omega_q^!$}
For each $0\leq j\leq n$, we denote by
\begin{displaymath}
\Lambda_q(m)^{(i)}:= \textrm{span}_{\Bbbk}\{\,x^\mu\mid |\,\mu\,|=i\,\}.
\end{displaymath}
It follows from the definition that $\Lambda_q(m)=\bigoplus_{i=0}^m\Lambda_q(m)^{(i)}$. For any
$t\in \mathbbm{Z}_+$, define
\begin{displaymath}
{\Omega_q^!}^{(t)}:=\bigoplus_{i+j=t}\Lambda_q(m)^{(i)}\otimes\mathcal{A}_{q^{-1}}^{(j)},
\end{displaymath}
where $\Omega_q^!=\Omega_q^!(m|n)$ or, $\Omega_q^!(m|n,\bold 1)$ only when $\textbf{char}(q)=\ell>2$, and
$\mathcal A_{q^{-1}}=\mathcal A_{q^{-1}}(n)$ or, $\mathcal A_{q^{-1}}(n,\bold 1)$ only when $\textbf{char}(q)=\ell>2$.

It is clear that the set $\{\,x^\mu\otimes x^{(\alpha)}\in\Omega_q^!\mid |\,\alpha\,|+|\,\mu\,|=t\,\}$
forms a $\Bbbk$-basis of the homogeneous space ${\Omega_q^!}^{(t)}$. Therefore, we have
$\Omega_q^!=\bigoplus_{t\geq0}{\Omega_q^!}^{(t)}$, in particular, $\Omega_q^!(m|n)=\bigoplus_{t\geq 0}\Omega_q^!(m|n)^{(t)}$
and $\Omega_q^!(m|n,\bold{1})=\bigoplus_{t=0}^{m+N}\Omega_q^!(m|n,\bold {l})^{(t)}$, where $N=|\,\tau(n)\,|=n(\ell{-}1)$.
\begin{theorem}
Each subspace $\Omega_q^!(m|n)^{(t)}$ is a $\mathcal U_q$-submodule of $\Omega_q^!(m|n)$ and
$\Omega_q^!(m|n, \bold 1)^{(t)}$ is a $u_q$-submodule of $\Omega_q^!(m|n, \bold 1)$ when $\textbf{char}(q)=\ell>2$.
\begin{enumerate}[$($i$)$]
\item \label{p1}If $\textbf{char}(q)=0$, then for $t\le m$, each submodule $\Omega_q^!(m|n)^{(t)}\cong V(\omega_t)$
is a simple module generated by highest weight vector $x^{\omega_t}\otimes 1$, where $\omega_t=\epsilon_1+\cdots+\epsilon_t;$ and for $t>m$, each submodule $\Omega_q^!(m|n)^{(t)}\cong V(\omega_m+(t{-}m)\epsilon_{m+1})$
is a simple module generated by highest weight vector $x^{\omega_m}\otimes x^{((t-m)\epsilon_{m+1})}$.

\item If $\textbf{char}(q)=\ell$, then  for $t\le m$, the submodule
$\Omega_q^!(m|n,\bold {1})^{(t)}\cong V(\omega_t);$ and for $t>m$, each submodule
$\Omega_q^!(m|n,\bold {1})^{(t)}\cong V(\omega_m{+}(\ell{-}1)\epsilon_{m+1}{+}\cdots{+}(\ell{-}1)
\epsilon_{m+(i-1)}{+}t_i\epsilon_{m+i})$ is a simple module generated by highest weight vector
$x^{\omega_m}\otimes x^{((\ell{-}1)\epsilon_{m+1}+\cdots+(\ell{-}1)\epsilon_{m+(i-1)}+t_i\epsilon_{m+i})}$
where $t{-}m=(i{-}1)(\ell{-}1)+t_i$, 
$(0\leq t_i\leq \ell{-}1)$.
\end{enumerate}
\end{theorem}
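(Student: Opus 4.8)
The plan is to mirror the proof of Theorem 28, transporting each step through the Manin duality that interchanges the exterior factor $\Lambda_q(m)$ (now the even part, indexed by $I_0$) and the divided-power factor $\mathcal A_{q^{-1}}(n)$ (the odd part, indexed by $I_1$). First I would note that, by the action formulae (5.11)--(5.18), every generator preserves the total degree $t=|\mu|+|\alpha|$: each $E_j,F_j$ with $j\ne m$ stays inside a single tensor factor and preserves its degree, while $E_m$ (resp.\ $F_m$) transfers exactly one unit from the divided-power factor to the exterior factor (resp.\ conversely). Hence $\mathcal U_q$ (resp.\ $u_q$) stabilizes each ${\Omega_q^!}^{(t)}$, which yields the asserted submodule decomposition.

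I would then exhibit and verify the highest weight vectors. For $t\le m$ I take $v=x^{\omega_t}\otimes 1=x_1\cdots x_t\otimes 1$; a direct check from (5.11)--(5.12) shows $\partial_{j+1}$ annihilates $v$ whenever site $j{+}1$ is vacant and $x_j$ annihilates it whenever site $j$ is occupied, so $E_j.v=0$ for all $j\in J$, while $K_i.v=q^{(\omega_t,\epsilon_i)}v$ pins the weight to $\omega_t$. For $t>m$ I take $v=x^{\omega_m}\otimes x^{((t-m)\epsilon_{m+1})}$: the fully occupied exterior factor kills every $E_j$ with $j\le m$, and the concentration of the divided power at the lowest odd index $m{+}1$ kills every $E_j$ with $j>m$, so $v$ is highest weight of weight $\omega_m+(t{-}m)\epsilon_{m+1}$. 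In the restricted case the surplus beyond the truncation bound $\ell{-}1$ must be spread over the successive odd indices, so that the offending $E_j$ vanish through the relation $[\,\ell\,]=0$, giving highest weight $\omega_m+(\ell{-}1)(\epsilon_{m+1}{+}\cdots{+}\epsilon_{m+i-1})+t_i\epsilon_{m+i}$.

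Next I would prove cyclicity, namely that $\mathcal U_q.v$ exhausts ${\Omega_q^!}^{(t)}$. As in Theorem 28 I would write down explicit lowering words: first reshape the exterior factor by $F_j$ with $j<m$ (which act on $\Lambda_q(m)$ via \cite[Prop.~4.3]{Hu}), then apply strings $F_{i_s-1}\cdots F_{m+1}F_m$ that peel fermions out of the exterior factor and push the freed degree into the divided-power factor, which is afterwards arranged inside $\mathcal A_{q^{-1}}(n)$ (resp.\ $\mathcal A_{q^{-1}}(n,\bold 1)$) by $F_j$ with $j>m$, using Proposition 4 with $q$ replaced by $q^{-1}$. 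Dually I would record raising words in the $E_j$ returning any monomial to $v$. The minimal-$(m{+}n)$-tuple argument in the lexicographic order, exactly as in the last paragraph of the proof of Theorem 28, then forces any nonzero submodule to contain $v$; combined with the simplicity of each factor this gives simplicity, and the weights computed above identify the isomorphism classes $V(\omega_t)$, $V(\omega_m+(t{-}m)\epsilon_{m+1})$, and their restricted analogues.

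The main obstacle will be the restricted root-of-unity case (ii) with $t>m$, precisely the transpose of Theorem 28(II). Once the divided-power exponents would exceed the bound $\ell{-}1$, one must spill the excess into successive odd indices $m{+}1,m{+}2,\dots$ and assemble the correspondingly long, carefully ordered words of $F_j$'s (and of $E_j$'s for the lifting), tracking the $q$-integer coefficients such as $[\,\ell{-}1{-}t'_{\bullet}\,]!$ and the powers of $[\,\ell{-}1\,]!$ to confirm that they are nonzero. This bookkeeping, together with the simplicity of the truncated factor $\mathcal A_{q^{-1}}(n,\bold 1)^{(j)}$ (from Proposition 4 and \cite[Prop.~3.5]{GH} applied with $q\mapsto q^{-1}$), is what consumes the bulk of the argument.
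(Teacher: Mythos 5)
Your proposal is correct and coincides with the paper's intent: the paper's own "proof" of this theorem is a one-line deferral ("similar to the proof of the earlier submodule theorem, left to the reader"), and your plan — stabilization of each ${\Omega_q^!}^{(t)}$ by degree counting, verification of the highest weight vectors $x^{\omega_t}\otimes 1$ and $x^{\omega_m}\otimes x^{((\ell-1)\epsilon_{m+1}+\cdots+t_i\epsilon_{m+i})}$, explicit $F$-lowering and $E$-lifting words transported through the Manin duality that swaps the exterior and divided-power factors, and the lexicographic minimal-tuple simplicity criterion — is exactly the transfer of the proof of Theorem 28 that the authors have in mind, including the correct use of the simplicity of $\mathcal A_{q^{-1}}(n,\bold 1)^{(j)}$ in the restricted case.
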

\begin{proof} The proof is similar to that of Theorem 27, and left to the reader.
\end{proof}

\begin{coro}
\begin{enumerate}[$($i$)$]
\item \label{p1}If $\textbf{char}(q)=0$, then for $t\le m$, $$\dim_k\Omega_q^!(m|n)^{(t)}=\dim_k V(\omega_t)=
\sum_{0\le s\le t}\binom{m}{s}\binom{n{+}t{-}s{-}1}{t{-}s};$$
and for $t>m$, $$\dim_k\Omega_q^!(m|n)^{(t)}=\dim_k V(\omega_m+(t{-}m)\epsilon_{m+1})=
\sum_{0\le s\le m}\binom{m}{s}\binom{n{+}t{-}s{-}1}{t{-}s}.$$

\item If $\textbf{char}(q)=\ell$, then  for $t\le m$, $$\dim_k\Omega_q^!(m|n,\bold {1})^{(t)}=\dim_k V(\omega_t)=
\sum_{0\le s\le t}\binom{m}{s}\dim_k\mathcal A_{q^{-1}}^{(t-s)}(n;\bold 1);$$ and for $t>m$,
\begin{equation*}
\begin{split}
\dim_k\Omega_q^!(m|n,\bold {1})^{(t)}&=\dim_k V(\omega_m{+}(\ell{-}1)\epsilon_{m+1}{+}\cdots{+}(\ell{-}1)
\epsilon_{m+(i-1)}{+}t_i\epsilon_{m+i})\\
&=\sum_{0\le s\le m}\binom{m}{s}\dim_k\mathcal A_{q^{-1}}^{(t-s)}(n;\bold 1),
\end{split}
\end{equation*}
where $\dim_k\mathcal A_{q^{-1}}^{(s)}(n;\bold 1)=\sum_{i=0}^{\lfloor
\frac{s}{\ell}\rfloor}(-1)^i\binom{n}{i}\binom{n{+}s{-}i\ell{-}1}{n{-}1}$,
where by $\lfloor x\rfloor$ means the
integer part of $x\in\mathbb Q$.
\end{enumerate}
\end{coro}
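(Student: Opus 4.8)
The plan is to obtain this final Corollary purely as a dimension count, once the module identifications of Theorem 33 are in hand. By Theorem 33 (the submodule structure on homogeneous spaces of $\Omega_q^!$), each component $\Omega_q^!(m|n)^{(t)}$, and each $\Omega_q^!(m|n,\bold 1)^{(t)}$ when $\textbf{char}(q)=\ell$, is isomorphic as a $\mathcal U_q$-module (resp. $u_q$-module) to the indicated simple highest-weight module $V(\cdots)$. Hence $\dim_k V(\cdots)=\dim_k\Omega_q^!(m|n)^{(t)}$, and the problem reduces to counting a monomial basis, exactly as in the proof of Corollary 29.

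First I would invoke the grading decomposition recorded in Subsection 5.3, namely ${\Omega_q^!}^{(t)}=\bigoplus_{i+j=t}\Lambda_q(m)^{(i)}\otimes\mathcal{A}_{q^{-1}}^{(j)}$, which immediately gives
\[
\dim_k\Omega_q^!(m|n)^{(t)}=\sum_{i+j=t}\dim_k\Lambda_q(m)^{(i)}\cdot\dim_k\mathcal A_{q^{-1}}^{(j)}.
\]
The exterior factor contributes $\dim_k\Lambda_q(m)^{(i)}=\binom{m}{i}$, since, as in Subsection 3.1, the monomial basis of $\Lambda_q(m)$ in degree $i$ is indexed by the $i$-element ordered subsets of $\{1,\dots,m\}$. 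Writing $s:=i$ and $j=t-s$ then produces the factor $\binom{m}{s}$ appearing in every displayed sum.

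For the divided-power factor I would reuse the dimension inputs already employed in the proof of Corollary 29: in the generic case $\dim_k\mathcal A_{q^{-1}}^{(d)}(n)=\dim_k\text{Sym}^d V=\binom{n+d-1}{d}$ with $\dim_k V=n$, and in the root-of-unity case the inclusion--exclusion count $\dim_k\mathcal A_{q^{-1}}^{(d)}(n;\bold 1)=\sum_{i\ge 0}(-1)^i\binom{n}{i}\binom{n+d-i\ell-1}{n-1}$ (Corollary 2.6 of \cite{GH}, applied with $q$ replaced by $q^{-1}$ and the truncation $\tau(n)=(\ell-1,\dots,\ell-1)$). Taking $d=t-s$ and summing against the exterior binomial $\binom{m}{s}$ then yields precisely the four displayed formulae.

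The only point requiring attention is the correct range of summation. Since $\Lambda_q(m)^{(s)}=0$ for $s>m$ and $\mathcal A_{q^{-1}}^{(t-s)}=0$ for $s>t$, the effective sum runs over $0\le s\le\min(t,m)$; this is exactly why the statement splits into the regime $t\le m$ (sum over $0\le s\le t$) and $t>m$ (sum over $0\le s\le m$). I do not anticipate a genuine obstacle here, as the content is pure enumeration and the entire weight of the argument rests on the simplicity and module identification supplied by Theorem 33 together with the already-established divided-power dimension formulae. The main thing to double-check is the bookkeeping, namely that the truncation bound $\tau(n)$ and the substitution $q\mapsto q^{-1}$ are applied to the correct tensor factor, matching the monomial basis of $\Omega_q^!(m|n,\bold 1)$ listed after Definition 31.
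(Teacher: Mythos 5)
Your proposal is correct and follows essentially the same route as the paper: the paper's own (very terse) proof likewise combines the simple-module identifications of Theorem 33 with the graded decomposition ${\Omega_q^!}^{(t)}=\bigoplus_{i+j=t}\Lambda_q(m)^{(i)}\otimes\mathcal{A}_{q^{-1}}^{(j)}$, the count $\dim_k\Lambda_q(m)^{(i)}=\binom{m}{i}$, and the divided-power dimension formulae (generic case via $\mathrm{Sym}^d V$, restricted case via Corollary 2.6 of \cite{GH}). Your explicit treatment of the summation ranges $0\le s\le\min(t,m)$ is exactly the bookkeeping the paper leaves implicit in the phrase ``the results follow from the proof of Theorem 33.''
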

\begin{proof}
Note that $\dim_k \mathcal A_{q^{-1}}^{(s)}(n)=\dim_k \text{Sym}^s V=\binom{n{+}s{-}1}{s}$ (see \cite{G}) when $\textbf{char}(q)=0$
and $\dim_k V=n$, while $\dim_k  \mathcal A_{q^{-1}}^{(s)}(n;\bold 1)=\sum_{i=0}^{\lfloor
\frac{s}{\ell}\rfloor}(-1)^i\binom{n}{i}\binom{n{+}s{-}i\ell{-}1}{n{-}1}$ (see Corollary 2.6 \cite{GH}) when $\textbf{char}(q)=\ell>0$.
Then the results follow from the proof of Theorem 33.
\end{proof}

\bigskip
\bigskip
\centerline{\bf ACKNOWLEDGMENT}

\bigskip

The main results of this paper have been reported in several workshops, e.g., a workshop of Yangzhou University, Jan. 23, 2018; and a workshop of Nanjing University of Information Science and Technology, Dec. 22, 2018;
and International Workshop of Hopf Algebras and Tensor Categories, Nanjing University, Xianlin Campus, Sept. 9 --- 13, 2019.
The second author is supported by the NSFC (Grant No. 11771142). The fourth author is supported by the Swedish G\"oran Gustafsson Stiftelse.

\bigskip

\end{document}